\newtheorem{definition}{Definition}[subsection]
\newtheorem{proposition}[definition]{Proposition}
\newtheorem{lemma}[definition]{Lemma}
\newtheorem{theorem}[definition]{Theorem}
\theoremstyle{definition}
\renewenvironment{proof}{
\noindent{\bf Proof.}\rm} {\mbox{}\hfill\rule{0.5em}{0.809em}\par}
\begin{document}
\title{Automaticity of one-relator semigroups with length less than or equal to three\footnote{Supported by the NNSF of China (11571121) and the Science and Technology Program of Guangzhou (201605121833161).}}
\author{ Yuqun Chen\footnote {Corresponding author.}, Haibin Wu, Honglian Xie \\
{\small  School of Mathematical Sciences, South China Normal
University}\\
{\small Guangzhou 510631, P. R. China}\\
{\small yqchen@scnu.edu.cn}\\
{\small 378238979@qq.com}\\
{\small honglianxie@126.com}}

\date{}

\maketitle \noindent\textbf{Abstract:}
The main results of this paper is to give a complete characterization of the automaticity of one-relator semigroups with length less than or equal to three.
Let $S=sgp\langle A|u=v\rangle$ be a semigroup generated by a set $A=\{a_1,a_2,\dots,a_n\},\ n\in \mathbb{N}$ with defining relation $u=v$, where $u,v\in A^*$ and $A^*$ is the free monoid generated by $A$. Such a semigroup is called a one-relator semigroup. Suppose that $|v|\leq|u|\leq3$, where $|u|$ is the length of the word $u$. Suppose that $a,b\in A,\ a\neq b$. Then we have the following:
(1) $S$ is  prefix-automatic if $u=v\not\in \{aba=ba,\ aab=ba,\ abb=bb\}$. Moreover, if $u=v\in \{aba=ba,\ aab=ba,\ abb=bb\}$ then $S$ is not automatic.  (2) $S$ is biautomatic if one of the following holds: (i) $|u|=3,\ |v|=0$, (ii) $|u|=|v|=3$, (iii) $|u|=2$ and $u=v\not\in \{ab=a,\ ab=b\}$. Moreover, if $u=v\in \{ab=a,\ ab=b\}$ then $S$ is not biautomatic.

\ \

\noindent \textbf{Key words:}~regular language, ~automatic semigroup,  one-relator semigroup, Gr\"{o}bner-Shirshov basis, normal form

\section{Introduction}\label{Intro}

\qquad The research of automaticity of groups started in the 1980's. Many scholars engaged in the study of this field and found a lot of research production. For instance, \cite{aga,wp,sc,sgag,aita}.

In the end of 1990's, the concept of automaticity was generalized to semigroups and monoids. In papers \cite{dpas,as,ac,sa,ascs}, the authors established the basic theory and obtained some results about automatic semigroups.

Not all properties of automatic groups hold for automatic semigroups. For example, the (first order) Dehn function of an automatic group is bounded above by a quadratic function but the (first order) Dehn function of an automatic semigroup may not be bounded above by any primitive recursive function, see \cite{sgag}. A semigroup is called left-left (left-right, right-left,  right-right, resp.) automatic semigroup if there exists $(A, L)$ such that for any $a\in A\cup\{\varepsilon\},\ _a^\$L\ (\leftidx{^\$}{L}_a,\ \leftidx{_a}{L}^\$,\  L_a^\$,\ resp.$) (see Definition \ref{bia}) is regular. Examples show that the four types of automatic semigroups are not equivalent, however, the four types of automatic groups are equivalent. An automatic group can be characterized by a geometric property of their Cayley graph, which is intuitively the following: ``there is a constant $c$ such that, if two fellows travel at the same speed by two paths ending at most one edge apart, then the distance between is always less than $c$", see \cite{wp}. This property, called  the fellow traveller property, plays an essential role in many of the results obtained so far about automatic groups, but does not characterize automatic semigroups, see \cite{as}. Therefore, the geometric theory that holds for automatic groups does not hold for automatic semigroups. However, Hoffmann and Thomas \cite{agcoas} introduced the definition of directed fellow traveller property. They proved that if $M$ is a left-cancellative monoid finitely generated by $A$ and if $L$ is a regular subset of the set of all words over $A$ that
maps onto $M$, then $M$ is automatic if it has the directed fellow traveller property.

Many semigroups, monoids and groups have been proved automatic, such as free groups, free semigroups, braid groups, braid monoids (\cite{wp}), divisibility monoids (\cite{ft}), plactic monoids (\cite{pl}) and Chinese monoids (\cite{bsfch}), and so on.

In recent years, research of the automaticity of semigroups is active, for instance, \cite{pl,bsfch,dpas,as,ac,sa,ascs,amcg,baxv,ABC,bisg,agcoas,rrs,fdfoam}. Automatcity theory of groups and semigroups have become important in today's computer algebra.

Suppose semigroup $S$ is generated by a finite set $A$ and $A^+$ is the free semigroup (without identity) generated by $A$. The key to decide whether a semigroup $S$ is automatic is to find a regular language $L\subseteq A^+$ such that $(A,L)$ is an automatic structure of $S$. We usually choose $L$ to be a normal form of $S$. Generally, it is not easy to find a normal form of a semigroup. However, Gr\"{o}bner-Shirshov bases theory can help us to solve this problem.

Soppose $S=sgp\langle A|R\rangle$ to be a monoid generated by a set $A$ with defining relations $R$. If the cardinal number of $R$ is 1, we call $S$ a one-relator semigroup. One-relator semigroups is a kind of important semigroups and whether the word problem of one-relator semigroups is solvable is still  open.

The paper is organized as follows. In section 2, we review the concepts of regular language, automatic semigroup, biautomatic semigroup, prefix-automatic semigroup, and  Gr\"{o}bner-Shirshov bases for associative algebras. We cite also some results that will be used in the paper. In section 3, we give some characterizations of some automatic (biautomatic, prefix-automatic) semigroups, in particular, a complete characterization of the automaticity of one-relator semigroups with length less than or equal to three is given.

\section{Preliminaries}

In this section, we give some definitions, notions and mention some results that will be used in the paper.

\subsection{Regular language}

\qquad For any  set $A$, we denote $A^+$ to be the set of all non-empty words over $A$ and $A^*$ to be the set of all words over $A$ including the empty word $\varepsilon$. If $A$ is a generating set of a semigroup $S$, interpreting concatenation is multiplication in $S$. We induce a semigroup epimorphism $\phi:\,A^+\rightarrow S$.  For convenience, we write $\alpha$ as element $\phi(\alpha)$ in $S$, and write $\alpha\equiv\beta$ when $\alpha,\beta$ are the same element in $A^*$, write $\alpha=\beta$ when $\alpha$ and $\beta$ represent the same element of $S$.

We also say that $A$ is an alphabet and call language any subset of $A^*$. We will consider regular language, i.e. those languages accepted by finite state automata, see \cite{wp}, for example. For any words $\alpha,\beta\in A^*$, letter $a\in A$, regular language $L$ over $A$, and semigroup $S$, we denote
\begin{align*}
|\alpha|&:\ the\ length\ of\ \alpha\ (|\varepsilon|=0),\\
M(L)&:\ the\ finite\ state\ automaton\ accepting\ L,\\
|S(M(L))|&:\ the\ number\ of\ states\ in\ M(L),\\
 DFSA&:\ deterministic\ finte\ state\ automaton,\\
occ(a,\alpha)&:\ the\ number\ of\ occurrences\ of\ a\ in\ \alpha,\ for\ example,\ if\ \alpha\equiv\ abababb, \\
&\ \ \ then\ occ(a,\alpha)=3,occ(b,\alpha)=4,\\
con(\alpha)&:\ the\ set\ of\ letters\ that\ occurred\ in\ \alpha,\ for\ example,\ if\ \alpha\equiv abababbc, \\
&\ \ \ then\ con(\alpha)=\{a,b,c\},\\
\mathcal{P}(C)&:\ the\ set\ of\ subsets\ of\ set\ C,\\
Pref(L)&:\ the\ set\ of\ prefix\ words\ of\ words\ in\ L,\\
S^1&:\ the\ monoid\ formed\ by\ adding\ an\ identity\ element\ to\ S.
\end{align*}

Suppose $\alpha\equiv a_1a_2\cdots a_n\in A^+$, where each $a_i\in A$. We define $\alpha(0):=\varepsilon$ and for any $t\geq1$,
\begin{flalign}
\quad\quad\ \ \  & \alpha(t):=
  \begin{cases}
    {a_1a_2\cdots a_t} & \mbox{ if $t\leq n$},\\
    {a_1a_2\cdots a_n} & \mbox{ if $t>n$},
  \end{cases}  & \nonumber\\
\quad\quad\ \ \  & \alpha[t]:=
  \begin{cases}
    {a_{n-t+1}a_{n-t+2}\cdots a_n} & \mbox{ if $t\leq n$},\\
    {a_1a_2\cdots a_n} & \mbox{ if $t>n$}.
  \end{cases}  & \nonumber
\end{flalign}

A $gsm$ (generalized sequential machine) is a six-tuple $\mathcal{A}=(Q,A,B,\mu,q_0,T)$ where $Q,A$ and $B$ are finite sets (called the states, the input alphabet and the output alphabet, resp.), $\mu$ is a (partial) function from $Q\times A$ to finite subsets of $Q\times B^*$, $q_0\in Q$ is the initial state and $T\subseteq Q$ is the set of terminal states. The inclusion $(q',u)\in \mu(q,a)$ corresponds to the following situation: if $\mathcal{A}$ is in state $q$ and reads input $a$, then it can move into state $q'$ and output $u$.
We can interpret $\mathcal{A}$ as a directed labelled graph with vertices $Q$, and an edge $q\xlongrightarrow{(a,u)} q'$ for every pair $(q',u)\in \mu(q,a)$. For a path
$$\pi\ :\ q_1\xlongrightarrow{(a_1,u_1)}q_2\xlongrightarrow{(a_2,u_2)}q_3\cdots\xlongrightarrow{(a_n,u_n)}q_{n+1}$$
we define
$$
\Phi(\pi):=a_1a_2\cdots a_n,\ \ \  \Sigma(\pi):=u_1u_2\cdots u_n.
$$
For any $q,q'\in Q,u\in A^+$ and $v\in B^+$ we write $q\xlongrightarrow{(u,v)}_+q'$ to mean that there exists a path $\pi$ from $q$ to $q'$ such that $\Phi(\pi)\equiv u$ and $\Sigma(\pi)\equiv v$, and we say that $(u,v)$ is the $label$ of the path. We say that a path is successful if it has the form $q_0\xlongrightarrow{(u,v)}_+t$ with $t\in T$.

 Any $gsm$ $\mathcal{A}$ induces a mapping $\eta_{\mathcal{A}}$ : $\mathcal{P}(A^+)\xlongrightarrow{}{\mathcal{P}(B^+)}$ defined by
$$\eta_{\mathcal{A}}(X)=\{v\in B^+|(\exists u\in X)(\exists t\in T)(q_0\xlongrightarrow{(u,v)}_+t)\}.$$
An useful result is that if $X$ is regular then so is $\eta_{\mathcal{A}}(X)$, see \cite{ascs}.

\begin{definition}\label{pad}\rm{(\cite{as})} Let $A$ be an alphabet and $\$$ be a new symbol not
in $A$. Let $A(2,\$)=(A\cup
\{\$\})\times(A\cup\{\$\})-\{(\$, \$)\}$. Define the mapping $\delta_A^R: A^*\times A^{*}\rightarrow (A(2, \$))^{*}$ by
\begin{equation*}
(u_{1}\cdots u_{m},v_{1}\cdots v_{n})\mapsto
\begin{cases} (u_{1},v_{1})\cdots (u_{m},v_{n})& \text{if}\  m=n, \\
(u_{1},v_{1})\cdots (u_{n},v_{n})(u_{n+1},\$)\cdots (u_{m},\$)& \text{if}\  m>n, \\
(u_{1},v_{1})\cdots (u_{m},v_{m})(\$,v_{m+1})\cdots (\$,v_{n})&
\text{if}\  m<n,
\end{cases}
\end{equation*}
and the mapping $\delta_A^L: A^*\times A^{*}\rightarrow (A(2,
\$))^{*}$ by
\begin{equation*}
(u_{1}\cdots u_{m},v_{1}\cdots v_{n})\mapsto
\begin{cases} (u_{1},v_{1})\cdots (u_{m},v_{n})& \text{if}\  m=n, \\
(u_{1},\$)\cdots (u_{m-n},\$)(u_{m-n+1},v_{1})\cdots (u_{m},v_{n})& \text{if}\  m>n, \\
(\$,v_{1})\cdots (\$,v_{n-m})(u_{1},v_{n-m+1})\cdots (u_{m},v_{n})&
\text{if}\  m<n,
\end{cases}
\end{equation*}
where each $ u_{i},\ v_{j}\in A $.
\end{definition}
Using the mappings $\delta_A^R$ and $\delta_A^L$ defined as above,
we can transform the relation into a language over $A(2,\$)$, which provides a way
to consider automata accepting pairs $(\alpha,\beta)$ of words with $\alpha,\beta\in A^+$ as in the case of automatic groups.
\begin{proposition}\label{22}\rm{(\cite{as})} If $L$ is a regular subset of $A^*$, then
$$
\Delta_L:=\{(\alpha,\alpha)\delta_A^R|\alpha\in L\}=\{(\alpha,\alpha)\delta_A^L|\alpha\in L\}
$$
is regular over $A(2,\$)$.
\end{proposition}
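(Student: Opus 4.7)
The plan is to split the statement into two independent observations: first that the two displayed sets coincide, and then that the common set is regular.

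For the equality, I would simply specialize Definition \ref{pad} to the case where the two arguments are literally the same word. If $\alpha \equiv a_1 a_2 \cdots a_m \in A^*$, then in the computation of $(\alpha,\alpha)\delta_A^R$ and $(\alpha,\alpha)\delta_A^L$ we always fall in the branch ``$m=n$'' of both definitions, so the padding symbol $\$$ is never introduced. Both outputs reduce to the single word
$$
(a_1,a_1)(a_2,a_2)\cdots(a_m,a_m) \in A(2,\$)^*,
$$
with the convention that the empty word maps to the empty word. Hence the two descriptions of $\Delta_L$ agree term by term.

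For regularity, I would give an explicit automaton. Starting from a DFSA $M=(Q,A,\delta,q_0,T)$ accepting $L$, I define $M'=(Q,A(2,\$),\delta',q_0,T)$ by setting
$$
\delta'(q,(x,y)) := \begin{cases} \delta(q,x) & \text{if } x=y\in A, \\ \text{undefined} & \text{otherwise.} \end{cases}
$$
A word in $A(2,\$)^*$ is accepted by $M'$ iff it has the diagonal form $(a_1,a_1)\cdots(a_m,a_m)$ with $a_1\cdots a_m\in L$, which is exactly $\Delta_L$ by the first paragraph. Equivalently, one may observe that the letter-to-letter map $\phi\colon A\to A(2,\$),\ a\mapsto (a,a)$ extends to a monoid homomorphism $A^*\to A(2,\$)^*$, that $\Delta_L=\phi(L)$, and that regular languages are closed under homomorphic images.

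There is no real obstacle here: the equality is immediate from the ``$m=n$'' branches of Definition \ref{pad}, and regularity is a direct relabelling of the states of an automaton for $L$. The only point requiring a little care is to keep track of the new alphabet $A(2,\$)$ and verify that the relabelled transition function is well-defined and deterministic, which is exactly how $\delta'$ above has been defined.
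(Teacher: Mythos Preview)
Your argument is correct. Note, however, that the paper does not actually supply its own proof of this proposition: it is simply quoted from \cite{as}, so there is no in-paper argument to compare against. Your two-step approach --- verifying the equality directly from the $m=n$ branch of Definition~\ref{pad}, and then either relabelling a DFSA for $L$ or observing that $\Delta_L$ is the image of $L$ under the letter-to-letter homomorphism $a\mapsto(a,a)$ --- is exactly the standard proof one finds in the literature on automatic semigroups, and it goes through without any gap.
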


\subsection{Automaticity}

\begin{definition}\label{bia}\rm{(\cite{ABC})} Suppose $S$ is a semigroup with a finite generating set $A$, $L$ is a regular language of $A^+$, and $\phi:A^+\rightarrow S$ is a homomorphism with $\phi(L)=S$. Let
\begin{eqnarray*}
\leftidx{_a^\$}{L}:&=&\{(\alpha,\beta)\delta_A^L|\alpha,\beta\in L,a\alpha=\beta\},\\
\leftidx{^\$}{L_a}:&=&\{(\alpha,\beta)\delta_A^L|\alpha,\beta\in L,\alpha a=\beta\},\\
\leftidx{_a}{L^\$}:&=&\{(\alpha,\beta)\delta_A^R|\alpha,\beta\in L,a\alpha=\beta\},\\
L_a^\$:&=&\{(\alpha,\beta)\delta_A^R|\alpha,\beta\in L,\alpha a=\beta\}.
\end{eqnarray*}
If for any $a\in A\cup\{\varepsilon\},\ _a^\$L\ (\leftidx{^\$}{L}_a,\ \leftidx{_a}{L}^\$,\  L_a^\$,\ resp.$) is regular, then we say semigroup $S$ has a left-left (left-right, right-left, right-right, resp.) automatic structure $(A,L)$. If this is the case, we also  say that $S$ is a {\bf left-left (left-right, right-left, right-right, resp.) automatic semigroup}.
If for any $a\in A\cup\{\varepsilon\}$, $\leftidx{_a}{L^{\$}},\ \leftidx{^\$}{L_a},\ \leftidx{_a}{L^\$}$ and $L_a^\$$ are all regular, then we say semigroup $S$ has a biautomatic structure $(A,L)$ and say that $S$ is a {\bf biautomatic semigroup}.

A  right-right automatic semigroup is also called an {\bf automatic semigroup}.
\end{definition}

\begin{definition}\label{pref}\rm{(\cite{paut})} Suppose $S$ is a semigroup with a finite generating set $A$, $L$ is a regular language of $A^+$, and $\phi:A^+\rightarrow S$ is a homomorphism with $\phi(L)=S$. If $(A, L)$ is an automatic structure for $S$ and
$$
L'_=:=\{(\alpha,\beta)\delta_A^R|\alpha\in L,\ \beta\in Pref(L),\ and\ \alpha=\beta\}
$$
is regular over $A(2,\$)$, then we say that $(A,L)$ is a prefix-automatic structure for $S$ and $S$ is a {\bf prefix-automatic semigroup}.
\end{definition}

For any $\alpha\equiv a_1a_2\cdots a_n\in A^+\ (a_i\in A)$,  $L\subseteq A^+$, and $U\subseteq A^+\times A^+$,
we denote
\begin{eqnarray*}
a^{rev}:&\equiv& a_na_{n-1}\cdots a_1,\\
L^{rev}:&=&\{\alpha^{rev}|\alpha\in L\},\\
U^{rev}:&=&\{(\alpha^{rev}, \beta^{rev})| (\alpha, \beta)\in U\}.
\end{eqnarray*}

\begin{lemma}\label{revaut}\rm{(\cite{ABC})}
\begin{enumerate}
\item[(i)] A semigroup $S$ is  left-left automatic  if and only if $S^{rev}$ is  right-right automatic.
\item[(ii)] A semigroup $S$ is  left-right automatic  if and only if $S^{rev}$ is right-left automatic.
\end{enumerate}
\end{lemma}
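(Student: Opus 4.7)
The approach is to exploit the involution $(S^{rev})^{rev}=S$ (and the analogous involutions on languages and padded pair sets) to collapse each biconditional to a single implication, then to exhibit $(A,L^{rev})$ as an automatic structure for $S^{rev}$ starting from a given automatic structure $(A,L)$ for $S$. Write $\phi:A^+\rightarrow S$ and $\phi^{rev}:A^+\rightarrow S^{rev}$ for the two evaluation homomorphisms; on the common underlying set they are related by $\phi^{rev}(\omega)=\phi(\omega^{rev})$. Consequently $\phi^{rev}(L^{rev})=\phi(L)=S^{rev}$, and $L^{rev}$ is regular because word reversal over $A$ preserves regularity.

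The crux is a padding--reversal identity over the two-track alphabet $A(2,\$)$: for all $\alpha,\beta\in A^+$,
$$(\alpha^{rev},\beta^{rev})\delta_A^R \;\equiv\; \bigl((\alpha,\beta)\delta_A^L\bigr)^{rev}.$$
This is a direct unpacking of Definition \ref{pad} in the three cases $|\alpha|=|\beta|$, $|\alpha|<|\beta|$, and $|\alpha|>|\beta|$: the block of $\$$-symbols that $\delta_A^L$ prepends to the shorter word migrates to the right end after the whole two-track word is reversed, matching precisely the block that $\delta_A^R$ appends to the reversed shorter word. Because regular languages over $A(2,\$)$ are closed under word reversal, a set $\{(\alpha,\beta)\delta_A^L:(\alpha,\beta)\in T\}$ is regular if and only if $\{(\alpha^{rev},\beta^{rev})\delta_A^R:(\alpha,\beta)\in T\}$ is regular.

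To finish, translate the multiplication relations. From $\phi^{rev}(\omega)=\phi(\omega^{rev})$ one obtains, for all $\alpha,\beta\in A^+$ and $a\in A\cup\{\varepsilon\}$, the equivalences
$$\alpha^{rev}a=_{S^{rev}}\beta^{rev} \iff a\alpha=_S\beta \qquad\text{and}\qquad a\alpha^{rev}=_{S^{rev}}\beta^{rev} \iff \alpha a=_S\beta.$$
Combining with the padding identity yields the set equalities $(L^{rev})_a^\$=\bigl(\leftidx{_a^\$}{L}\bigr)^{rev}$, proving part (i), and $\leftidx{_a}{(L^{rev})}^\$=\bigl(\leftidx{^\$}{L}_a\bigr)^{rev}$, proving part (ii). Both converse directions follow by applying the same construction to $S^{rev}$, since $(S^{rev})^{rev}=S$ and $(L^{rev})^{rev}=L$. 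The only case requiring a separate look is $a=\varepsilon$, which encodes equality of elements of $S$; but the identity $\phi^{rev}(\alpha^{rev})=\phi(\alpha)$ handles it at once, and the padding identity is unchanged. The main delicate point in the whole argument is simply checking that $\delta_A^L$ and $\delta_A^R$ really do swap under two-track reversal in the length-mismatched cases; once that is confirmed, everything else is bookkeeping.
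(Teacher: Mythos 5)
Your proof is correct. Note that the paper itself gives no proof of this lemma --- it is quoted directly from Hoffmann and Thomas \cite{ABC} --- and your argument is exactly the standard one behind that citation: the identity $(\alpha^{rev},\beta^{rev})\delta_A^R\equiv\bigl((\alpha,\beta)\delta_A^L\bigr)^{rev}$ (which you correctly verify in all three length cases), closure of regular languages under word reversal over $A(2,\$)$, and the translation $\phi^{rev}(\omega)=\phi(\omega^{rev})$ of the multiplication relations, with the converses following from the involutive character of reversal. The only cosmetic point is that your superscript $rev$ on the right-hand sides of the final set equalities denotes reversal of words over the two-track alphabet $A(2,\$)$ rather than the paper's $U^{rev}$ notation for pair sets; this is clear from context and does not affect correctness.
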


\begin{proposition}\label{111}\rm{(\cite{ABC})}
Let $S$ be a semigroup. Then $S$ is (bi-)automatic if and only if $S^1$ is (bi-)automatic.
\end{proposition}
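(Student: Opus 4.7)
The plan is to exhibit explicit translations between automatic structures for $S$ and those for $S^1$ in both directions, running the same argument for each of the four padded relations in Definition~\ref{bia} to obtain the biautomatic case. If $S$ already has an identity then $S^1=S$ and there is nothing to prove, so assume throughout that $1_{S^1}\notin\phi(A^+)$ is genuinely adjoined.

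For the $(\Rightarrow)$ direction, I would take an automatic structure $(A,L)$ for $S$, introduce a fresh symbol $e$ mapping to $1_{S^1}$, and set $A':=A\cup\{e\}$, $L':=L\cup\{e\}$. Surjectivity of $\phi$ onto $S^1$ and regularity of $L'$ are immediate. For each relation language (for instance $(L')_a^{\$}$ with $a\in A'\cup\{\varepsilon\}$) one case-splits on whether $\alpha,\beta$ equal $e$ or lie in $L$; every resulting piece reduces either to $L_a^{\$}$ (regular by hypothesis), to a finite or empty set, or to $M_a:=\{\gamma\in L:\phi(\gamma)=\phi(a)\}$ for some $a\in A$. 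The regularity of $M_a$ follows by fixing any $\alpha_a\in L$ with $\phi(\alpha_a)=\phi(a)$ and observing that $M_a$ is the $gsm$-projection onto the second coordinate of the regular set $L_{\varepsilon}^{\$}\cap\{(\alpha_a,\gamma)\delta_A^R:\gamma\in A^*\}$, invoking the closure of regular languages under $gsm$-images recalled in Section~2. The three left-padded variants needed in the biautomatic case are handled symmetrically via $\delta_A^L$.

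For the $(\Leftarrow)$ direction, given $(B,K)$ automatic for $S^1$, put $A:=\{b\in B:\phi(b)\neq 1_{S^1}\}$, which generates $S$ because removing identity letters from any $B^+$-representative of $s\in S$ leaves a non-empty word in $A^+$. Let $\pi:B^{*}\to A^{*}$ delete the other generators; then $L:=\pi(K)\cap A^+$ is regular with $\phi(L)=S$, and the substantive claim is
\[
L_a^{\$}\;=\;\bigl\{(\pi(\alpha),\pi(\beta))\,\delta_A^R:(\alpha,\beta)\in K_a^{\$},\ \pi(\alpha),\pi(\beta)\in A^+\bigr\}
\]
together with its three padded-pair siblings. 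The hard step will be realising the parallel-erasure map $(\alpha,\beta)\delta_A^R\mapsto(\pi(\alpha),\pi(\beta))\delta_A^R$ by a finite-state device: the padding of a pair depends simultaneously on $|\alpha|$ and $|\beta|$, while the erasures on the two coordinates are unsynchronised, so no one-pass letter-by-letter $gsm$ succeeds. The fix is to decompose the task into a short pipeline of $gsm$s (separate the padded pair into its two coordinates, apply $\pi$ to each, then re-pad), each step being one of the standard regularity-preserving operations from Section~2; the image of the regular language $K_a^{\$}$ through this pipeline is then regular, and the biautomatic variants are obtained identically for the remaining three padded relations.
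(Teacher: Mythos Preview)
The paper does not give its own proof of this proposition: it is quoted from \cite{ABC} (Hoffmann--Thomas) without argument, so there is no in-paper proof to compare against. What remains is to assess your plan on its merits.

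Your forward direction $(\Rightarrow)$ is sound and is essentially the construction the paper itself uses later (Lemma~\ref{preaut}) for the prefix-automatic case: adjoin a fresh letter $e$, set $L'=L\cup\{e\}$, and check the finitely many new cases. The handling of $M_a$ via projection from $L_\varepsilon^{\$}$ is correct.

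The backward direction $(\Leftarrow)$ has a genuine gap. Your proposed pipeline is ``separate the padded pair into its two coordinates, apply $\pi$ to each, then re-pad''. But separating $(\alpha,\beta)\delta_B^R$ into its coordinates destroys the pairing: you recover only the two projections $\{\alpha:(\alpha,\beta)\delta_B^R\in K_a^{\$}\}$ and $\{\beta:(\alpha,\beta)\delta_B^R\in K_a^{\$}\}$ as separate regular languages, and ``re-padding'' can only give you their Cartesian product, a strict superset of the relation you want. If instead you intend to keep the pair together as, say, $\alpha\#\beta$ and erase on both sides before re-padding, note that $\{\alpha\#\beta:(\alpha,\beta)\delta_B^R\in K_a^{\$}\}$ need not be regular at all (this is exactly the synchronous/asynchronous distinction for rational relations), and even if it were, a $gsm$ cannot turn $u\#v$ into $(u,v)\delta_A^R$ without counting $|u|$. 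None of the closure results recorded in Section~2 ($gsm$ images, $\Delta_L$, $M\odot N$) supplies the missing step: a single $gsm$ has only finite memory, while the discrepancy between the number of $e$'s erased from $\alpha$ and from $\beta$ is unbounded, so the shift in padding alignment cannot be tracked.

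The actual proof in \cite{ABC} avoids this erasure problem; a clean route is to invoke change of generators (Proposition~\ref{cha}) to assume the automatic structure for $S^1$ is already over $A\cup\{e\}$, pass to uniqueness (Proposition~\ref{uni}), and then argue more carefully---but not by the coordinate-separation pipeline you describe.
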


\begin{proposition}\label{fre}\rm{(\cite{as})} Let $S_1$ and $S_2$ be semigroups. Then the free product $S_1*S_2$ of $S_1$ and $S_2$ is right-right (left-left) automatic if and only if both $S_1$ and $S_2$ are right-right (left-left) automatic.
\end{proposition}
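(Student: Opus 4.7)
The plan is to treat each direction separately in the right-right case; the left-left statement then follows from Lemma \ref{revaut} combined with the identification $(S_1*S_2)^{rev}\cong S_1^{rev}*S_2^{rev}$.

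For the forward direction, let $(A_i,L_i)$ be a right-right automatic structure for $S_i$, with $A_1\cap A_2=\emptyset$, and set $A=A_1\cup A_2$. I would take $L\subseteq A^+$ to be the set of alternating concatenations $w_1w_2\cdots w_k$ where each $w_i\in L_1\cup L_2$ and $w_i,w_{i+1}$ are drawn from different $L_j$'s. This language is regular because $L_1,L_2$ are regular and the alternation pattern is finite-state checkable, while the normal form theorem for the free product guarantees that $L$ surjects onto $S_1*S_2$. To show that the right multiplier $L_a^\$$ is regular for each $a\in A_1$ (the case $a\in A_2$ is symmetric), I split on the type of the final block of $\alpha\in L$: if that block is in $L_1$, multiplication by $a$ only rewrites the final block, and the pairs are produced by concatenating a ``copy the prefix'' transducer with the regular multiplier $(L_1)_a^\$$ of $S_1$; if that block is in $L_2$, then $\beta=\alpha v_a$ for a fixed $v_a\in L_1$ representing $a$, which is visibly regular under $\delta_A^R$. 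Taking the union over these cases and over all $a\in A\cup\{\varepsilon\}$ yields all required multiplier languages.

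For the reverse direction, assume $(A,L)$ is a right-right automatic structure for $S_1*S_2$. By invariance of automaticity under change of finite generating sets (see \cite{as}), I may assume $A=A_1\cup A_2$ with $A_i$ generating $S_i$. Set $L_i:=L\cap A_i^+$, which is regular. Any $\alpha\in L$ whose image lies in $S_i\subseteq S_1*S_2$ must, by uniqueness of the free-product normal form, consist only of letters from $A_i$; hence $L_i$ surjects onto $S_i$. The right multiplier of $S_i$ for $a\in A_i$ is then $L_a^\$\cap A_i(2,\$)^*$, a regular intersection, and $(A_i,L_i)$ is a right-right automatic structure for $S_i$.

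The main obstacle will be the padding bookkeeping in the forward direction: $\delta_A^R$ right-pads the shorter of $\alpha,\beta$ with $\$$'s, and this interacts nontrivially with the alternating block structure of $L$ because rewriting the final block can change its length, forcing the pad position to shift within the pair. I expect to handle this by writing out the ``copy prefix, rewrite suffix'' construction as a $gsm$ acting on pairs in $A(2,\$)$, invoking Proposition \ref{22} for the ``copy prefix'' portion and the cited closure of regular sets under $gsm$-images to combine the pieces. The remaining combinatorics---verifying that the resulting language actually equals $L_a^\$$---is routine.
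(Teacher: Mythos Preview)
The paper does not prove Proposition~\ref{fre}; it is quoted from \cite{as} as a known result, so there is no in-paper argument to compare against.  Your outline follows the standard route taken in that reference.

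There is, however, a genuine gap in your ``both automatic $\Rightarrow$ free product automatic'' direction.  You describe the pairs in $L_a^\$$ (when the final block of $\alpha$ lies in $L_1$) as arising from a ``copy the prefix'' transducer followed by $(L_1)_a^\$$, and in the other case you write $\beta=\alpha v_a$ for a fixed $v_a$.  Both descriptions assume that $\beta$ agrees with $\alpha$ block-by-block on the non-final syllables.  That holds only if the $(A_i,L_i)$ are structures \emph{with uniqueness}.  In general, if $\alpha=w_1\cdots w_k$ and $\beta=v_1\cdots v_m$ are alternating and $\alpha a=\beta$, the free-product normal form forces only $\bar w_i=\bar v_i$ in the relevant $S_j$, not $w_i\equiv v_i$; so your language misses all pairs where some earlier block is replaced by an $L_j$-equivalent word.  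Trying to repair this by replacing the diagonal with blockwise copies of $(L_j)_\varepsilon^\$$ collides with the hypothesis of Proposition~\ref{qua}, which needs a uniform bound on $||w_1|-|w_2||$ in the left factor, and $(L_j)_\varepsilon^\$$ need not have one.  The clean fix is to invoke Proposition~\ref{uni} at the outset and pass to automatic structures with uniqueness for each $S_i$; then the prefix blocks really are copied verbatim, the diagonal has length-difference zero, and the $\odot$-concatenation with $(L_1)_a^\$$ goes through exactly as you wrote.

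A smaller point on the converse: the change-of-generators fact you cite is stated in this paper as Proposition~\ref{cha} for \emph{monoids}.  For semigroups one routes through $S^1$ via Proposition~\ref{111}; this is harmless but should be made explicit.
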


\begin{proposition}\label{qua}\rm{(\cite{ascs})} Let $A$ be an alphabet and let $M,N$ be regular languages over $A(2,\$)$. If there exists a constant $C$ such that, for any two words $w_1,w_2\in A^*$, we have
$$(w_1,w_2)\delta_A^R\in M\Rightarrow\ ||w_1|-|w_2||\leq C,$$
then the language
$$
M\odot N:=\{(w_1w_1',w_2w_2')\delta_A^R|(w_1,w_2)\delta_A^R\in M,(w_1',w_2')\delta_A^R\in N\}
$$
is regular.
\end{proposition}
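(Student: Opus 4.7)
The plan is to decompose $M$ according to the length difference between its two tracks, then for each piece build an NFA with a bounded FIFO buffer that handles the ``overlap'' between the $M$-encoding and the $N$-encoding.

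For each integer $d$ with $|d|\le C$, set $M^{(d)}:=M\cap\{(u,v)\delta_A^R:|u|-|v|=d\}$. The language $\{(u,v)\delta_A^R:|u|-|v|=d\}$ is regular for each fixed $d$, so $M^{(d)}$ is regular, and $M=\bigcup_{|d|\le C}M^{(d)}$. Consequently $M\odot N=\bigcup_{|d|\le C}(M^{(d)}\odot N)$, and by closure of regular languages under finite unions it suffices to prove each $M^{(d)}\odot N$ is regular. By the evident symmetry (swapping the two tracks throughout) I may assume $d\ge 0$. For $d=0$, the elements of $M^{(0)}$ lie in $(A\times A)^*$, so $M^{(0)}\odot N$ coincides with the ordinary language concatenation $M^{(0)}\cdot N$ over $A(2,\$)^*$ and is regular; so I assume $d\ge 1$.

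Fix $d\ge 1$ and let $D_M,D_N$ be DFSAs accepting $M^{(d)}$ and $N$. I construct an NFA $\mathcal{B}$ for $M^{(d)}\odot N$ with states $(q_M,q_N,\mathrm{ph},k,B)$, where $q_M,q_N$ are states of $D_M,D_N$, $\mathrm{ph}\in\{\mathrm{pre},\mathrm{over},\mathrm{post},\mathrm{drain}\}$, $k\in\{0,\dots,d\}$ is a counter used inside $\mathrm{over}$, and $B$ is a FIFO buffer over $A\cup\{\$\}$ of length at most $d\le C$, so the state set is finite. In $\mathrm{pre}$ each input $(a,b)\in A\times A$ is fed to $D_M$ and $\mathcal{B}$ nondeterministically switches to $\mathrm{over}$ with $k=0$. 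In $\mathrm{over}$, $\mathcal{B}$ reads exactly $d$ further symbols $(a,c)$, feeding $(a,\$)$ to $D_M$ and pushing $c$ onto $B$; on reaching $k=d$, $D_M$ must be accepting, and $\mathcal{B}$ enters $\mathrm{post}$. In $\mathrm{post}$, each input symbol $(a',c')$ pops the front $b$ of $B$, feeds $(a',b)$ to $D_N$, and pushes $c'$. When the input is exhausted, $\mathcal{B}$ enters $\mathrm{drain}$, popping $B$ and feeding $(\$,b)$ to $D_N$ while the front $b$ is not $\$$. Finally $\mathcal{B}$ accepts iff $D_N$ is in an accepting state.

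Correctness is verified by tracing the run on $(w_1w_1',w_2w_2')\delta_A^R$ with $(w_1,w_2)\delta_A^R\in M^{(d)}$ and $(w_1',w_2')\delta_A^R\in N$: the intended run switches to $\mathrm{over}$ after exactly $|w_2|$ $\mathrm{pre}$-symbols, so $D_M$ consumes precisely $(w_1,w_2)\delta_A^R$, and $B$ ends up loaded with $w_2'(1)\cdots w_2'(\min(d,|w_2'|))$ followed by $\$$s. The invariant that the valid $w_2'$-characters in $B$ form a contiguous prefix (followed by $\$$s) is preserved by each push/pop pair, so the FIFO correctly realigns the tracks: the $j$-th symbol fed to $D_N$ is $(w_1'(j),w_2'(j))$ when both are defined, $(w_1'(j),\$)$ when $j>|w_2'|$, and $(\$,w_2'(j))$ when $j>|w_1'|$, reproducing $(w_1',w_2')\delta_A^R$ exactly. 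Conversely, any accepting run of $\mathcal{B}$ determines such a decomposition of its input. The main technical obstacle is the end-of-input bookkeeping — coordinating input exhaustion, buffer draining, and the $\$$-padding required by $\delta_A^R$ across the sub-cases determined by whether $|w_1'|$ or $|w_2'|$ is larger and whether $|w_2'|\ge d$ — but because the buffer size is bounded by $C$, all such cases are absorbed into the finite state set of $\mathcal{B}$.
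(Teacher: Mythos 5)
The paper gives no proof of Proposition \ref{qua} at all: it is quoted from \cite{ascs}, so there is no in-paper argument to compare yours against. Judged on its own, your proof is correct, and it is essentially the standard argument for this closure property: split $M$ by the (bounded) length difference $d$ of its two tracks, note that the $d=0$ piece reduces to ordinary concatenation of languages over $A(2,\$)$, and for $d\geq 1$ absorb the $d$-symbol misalignment between the two convolutions into a FIFO buffer of length $d\leq C$ carried in the finite control, guessing nondeterministically where the $M$-part ends. Your alignment bookkeeping is right: after the over phase the buffer holds $w_2'(1)\cdots w_2'(d)$ (padded with $\$$), and the push/pop discipline maintains the window $w_2'(j+1)\cdots w_2'(j+d)$, so the $j$-th symbol delivered to $D_N$ is exactly the $j$-th symbol of $(w_1',w_2')\delta_A^R$; the case analysis over $|w_1'|$ versus $|w_2'|$ and $|w_2'|$ versus $d$ works out as you claim. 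Two small points of formalization remain: the switch into the over phase must also be permitted before any symbol is read (to cover $w_2\equiv\varepsilon$), and an NFA has no end-of-input transitions, so the drain phase should be folded into the acceptance condition --- declare a state $(q_M,q_N,\mathrm{post},k,B)$ accepting iff feeding the non-$\$$ entries of $B$ as symbols $(\$,b)$ to $D_N$ from $q_N$ lands in an accepting state of $D_N$, which is a finite condition since $B$ is part of the state. Neither point affects correctness.
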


\begin{proposition}\label{sam}\rm{(\cite{ABC})} Let $L\subseteq A^*\times A^*$. Let $k$ be a constant such that
$$||\alpha|-|\beta||\leq k$$
for all $(\alpha,\beta)\in L$. Then $\{(\alpha,\beta)\delta_A^L|(\alpha,\beta)\in L\}$ is regular if and only if $\{(\alpha,\beta)\delta_A^R|(\alpha,\beta)\in L\}$ is regular.
\end{proposition}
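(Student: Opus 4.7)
The plan is to exploit the fact that, under the length-difference bound $||\alpha|-|\beta||\leq k$, the two encodings $(\alpha,\beta)\delta_A^R$ and $(\alpha,\beta)\delta_A^L$ differ only in the position (right versus left) of a padding block of total length at most $k$. Such a bounded relocation is realisable by a $gsm$, after which I invoke the closure of regular languages under $gsm$-mappings $\eta_{\mathcal{A}}$ cited in the preliminaries.

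First I would reduce to a fixed signed length difference. For each $d\in\{-k,-k+1,\ldots,k\}$, set $L_d:=\{(\alpha,\beta)\in L : |\alpha|-|\beta|=d\}$, and write $L^R:=\{(\alpha,\beta)\delta_A^R : (\alpha,\beta)\in L\}$ together with $L_d^R$ defined analogously, and similarly $L^L$ and $L_d^L$. The language $L_d^R$ is recovered from $L^R$ by intersecting with the regular ``shape'' language of words in $(A(2,\$))^*$ in which $\$$ occurs exactly in the last $|d|$ positions, all in the coordinate prescribed by the sign of $d$; $L_d^L$ is recovered analogously from $L^L$ with the $\$$'s pushed to the left. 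Since there are only $2k+1$ values of $d$, $L^R$ (respectively $L^L$) is regular if and only if each $L_d^R$ (respectively $L_d^L$) is regular, so it suffices to prove the equivalence for each fixed $d$.

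For $d=0$ the two encodings coincide and there is nothing to show. For $d>0$ (the case $d<0$ is symmetric) I would construct a $gsm$ $\mathcal{A}_d$ whose effect on input
$$
(u_1,v_1)\cdots(u_n,v_n)(u_{n+1},\$)\cdots(u_m,\$),\quad m-n=d,
$$
is to output $(u_1,\$)\cdots(u_d,\$)(u_{d+1},v_1)\cdots(u_m,v_n)$. The states of $\mathcal{A}_d$ carry a queue of length at most $d\leq k$ that stores the second-coordinate letters already read but not yet emitted. In the first $d$ steps the machine emits $(u_i,\$)$ while pushing $v_i$; thereafter it emits $(u_i,v_{i-d})$ by popping the queue, pushing $v_i$ back when the input's second coordinate lies in $A$ and pushing nothing when it equals $\$$. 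A symmetric $gsm$ $\mathcal{B}_d$ inverts this transformation. Because $\eta_{\mathcal{A}_d}(L_d^R)=L_d^L$ and $\eta_{\mathcal{B}_d}(L_d^L)=L_d^R$, the closure of regular languages under $\eta$ recorded after the definition of a $gsm$ yields the desired equivalence for each $d$, and combining with the first step completes the proof.

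The main obstacle is the detail of the transition in $\mathcal{A}_d$ between the ``data'' region (positions $1,\ldots,n$) and the ``padding'' region (positions $n+1,\ldots,m$), since a $gsm$ reads left-to-right and does not see $n$ in advance. This can be handled either by allowing $\mathcal{A}_d$ to guess nondeterministically when the second coordinate of the input first becomes $\$$ (verifying the guess by forcing $\$$ on every subsequent read), or deterministically by exploiting the fact that $d$ is fixed throughout the construction, so the machine simply requires exactly $d$ trailing symbols with second coordinate $\$$ and is otherwise deterministic.
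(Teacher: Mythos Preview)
The paper does not supply a proof of this proposition; it is quoted from \cite{ABC} (Hoffmann--Thomas) as a preliminary result and used as a black box throughout. So there is no in-paper argument to compare against.

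Your proposal is sound and is essentially the standard argument. Splitting into finitely many slices $L_d$ by signed length difference is the right reduction, and the bounded-queue $gsm$ that delays the second coordinate by exactly $d$ positions realises the map $L_d^R\mapsto L_d^L$; closure under $\eta_{\mathcal{A}}$ then finishes. One small point worth tightening: in the description of $\mathcal{A}_d$ you say ``in the first $d$ steps the machine emits $(u_i,\$)$ while pushing $v_i$'', but if $n<d$ some of those second coordinates are already $\$$. The clean fix is to make the push rule uniform---push the second coordinate only when it lies in $A$, regardless of phase---which keeps the queue length bounded by $\min(d,n)\leq d$ and handles short inputs correctly. With that adjustment (and the symmetric $\mathcal{B}_d$, or simply the observation that the roles of $\delta_A^R$ and $\delta_A^L$ are symmetric under reversal $w\mapsto w^{rev}$, which also gives the converse), the argument is complete.
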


\begin{lemma}\label{qup}
 Let $A$ be an alphabet and let $M,N$ be regular languages over $A(2,\$)$. If there exist constants $C$ and $C'$ such that, for any words $w_1,w_2,w_1',w_2'\in A^*$, we have
$$(w_1,w_2)\delta_A^L\in M\delta_A^L \Rightarrow ||w_1|-|w_2||\leq C,$$
$$(w_1',w_2')\delta_A^L\in N\delta_A^L \Rightarrow ||w_1'|-|w_2'||\leq C',$$
then the language $$
M\odot'N:=\{(w_1w_1',w_2w_2')\delta_A^L|(w_1,w_2)\delta_A^L\in M\delta_A^L,(w_1',w_2')\delta_A^L\in N\delta_A^L\}
$$
is regular.
\end{lemma}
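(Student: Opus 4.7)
The plan is to reduce Lemma \ref{qup} to Proposition \ref{qua} via the conversion between the two padding conventions provided by Proposition \ref{sam}. The hypothesis that both $M$ and $N$ give pairs of bounded length difference is exactly what allows this back-and-forth translation to preserve regularity.

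More precisely, I would first define the underlying (unencoded) relations $\widetilde{M}:=\{(w_1,w_2)\in A^*\times A^*\mid (w_1,w_2)\delta_A^L\in M\}$ and $\widetilde{N}$ analogously. By hypothesis $\widetilde{M}$ and $\widetilde{N}$ have bounded length difference by $C$ and $C'$ respectively, so Proposition \ref{sam} applies in both directions: the languages $\widetilde{M}\delta_A^R$ and $\widetilde{N}\delta_A^R$ are regular over $A(2,\$)$. Now the two languages $\widetilde{M}\delta_A^R$ and $\widetilde{N}\delta_A^R$ meet the hypothesis of Proposition \ref{qua} (the constant $C$, resp.\ $C'$, bounds the length difference), so
$$
(\widetilde{M}\delta_A^R)\odot(\widetilde{N}\delta_A^R)=\{(w_1w_1',w_2w_2')\delta_A^R\mid (w_1,w_2)\in\widetilde{M},\ (w_1',w_2')\in\widetilde{N}\}
$$
is regular over $A(2,\$)$.

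Next I would check, by the triangle inequality, that every pair $(w_1w_1',w_2w_2')$ arising from this construction satisfies
$$
\bigl||w_1w_1'|-|w_2w_2'|\bigr|\leq \bigl||w_1|-|w_2|\bigr|+\bigl||w_1'|-|w_2'|\bigr|\leq C+C',
$$
so the underlying relation for $M\odot' N$ has bounded length difference $C+C'$. Therefore Proposition \ref{sam} applies once more, in the other direction: since the $\delta_A^R$-encoding of this relation is regular, the $\delta_A^L$-encoding, which is precisely $M\odot' N$, is also regular.

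The main step to get right is the very first one, namely being careful that the notation $M\delta_A^L$ in the hypothesis really does unambiguously describe a relation with bounded length difference, so that Proposition \ref{sam} can be invoked; once that is settled the rest is a mechanical reduction. There is no genuine combinatorial obstacle: the lemma is essentially the observation that $\odot$ and $\odot'$ differ only by the choice of padding, and under a uniform length-difference bound that choice is immaterial for regularity.
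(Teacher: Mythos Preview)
Your proposal is correct and follows essentially the same route as the paper: convert the $\delta_A^L$-encoded languages to $\delta_A^R$-encodings via Proposition~\ref{sam}, apply Proposition~\ref{qua} to get regularity of the $\odot$-product, then use the combined bound $C+C'$ and Proposition~\ref{sam} once more to return to the $\delta_A^L$-encoding. Your explicit introduction of the underlying relations $\widetilde{M},\widetilde{N}$ is a helpful clarification of the paper's somewhat loose notation, but the argument is otherwise identical.
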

\begin{proof}
Since $(w_1,w_2)\delta_A^L\in M\delta_A^L \Rightarrow ||w_1|-|w_2||\leq C$ and   $(w_1',w_2')\delta_A^L\in N\delta_A^L \Rightarrow ||w_1'|-|w_2'||\leq C',$ by Proposition \ref{sam}, we have
$M\delta_A^R$, $N\delta_A^R$ are regular. Then we have $M\odot N$ is regular by Proposition \ref{qua}. For any $(\alpha,\beta)\delta_A^R\in M\odot N$, we have $||\alpha|-|\beta||\leq C+C'$. Hence, by Proposition \ref{sam}, $M\odot' N$ is regular.
\end{proof}

\begin{proposition}\label{cha}\rm{(\cite{ABC})} If $M$ is an automatic monoid and $A$ is any finite  generating set for $M$, then there is a regular language $L$ over $A^+$ such that $(A,L)$ is an automatic structure for $M$.
\end{proposition}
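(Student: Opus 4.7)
The plan is to transport the given automatic structure of $M$ onto the new alphabet $A$ via a pair of substitution morphisms. Since $M$ is automatic, I fix an automatic structure $(B, K)$ for $M$ with $B$ a finite generating set and $K \subseteq B^+$ regular. Because both $A$ and $B$ generate $M$, I pick, for each $b \in B$, a representative $w_b \in A^+$ with $\phi_A(w_b) = \phi_B(b)$, and for each $a \in A$, a representative $v_a \in B^+$ with $\phi_B(v_a) = \phi_A(a)$ (a standard side-argument handles the identity should $M$ be a proper monoid). Extending these linearly yields monoid homomorphisms $\tau : B^* \to A^*$ and $\sigma : A^* \to B^*$ satisfying $\phi_A \circ \tau = \phi_B$ and $\phi_B \circ \sigma = \phi_A$. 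I then set $L := \tau(K)$. Morphic images of regular languages are regular, so $L$ is regular over $A$, and $\phi_A(L) = \phi_A(\tau(K)) = \phi_B(K) = M$, so $L$ surjects onto $M$.

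The real work is to show $L_a^\$$ is regular for every $a \in A \cup \{\varepsilon\}$, and my approach has two layers. First, inside the original structure, I upgrade closure under right-multiplication by a single letter of $B$ to closure under right-multiplication by the bounded word $v_a$: writing $v_a \equiv b_1 \cdots b_k$, the auxiliary relation
$$
R_{v_a} := \{(\alpha',\beta')\delta_B^R : \alpha',\beta' \in K,\ \alpha' v_a = \beta' \text{ in } M\}
$$
decomposes by inserting intermediate witnesses in $K$ for each partial product $\alpha' b_1 \cdots b_i$ (such witnesses exist because $\phi_B(K) = M$), realising $R_{v_a}$ as a composition of the regular relations $K_{b_i}^\$$ interleaved with copies of the diagonal $\Delta_K$ from Proposition~\ref{22}. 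Each $K_{b_i}^\$$ has length difference at most one, so the composition stays length-bounded, and a synchronous-automaton argument in the spirit of Proposition~\ref{qua} yields regularity at every stage.

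Second, I transport $R_{v_a}$ down to $A$ via a generalised sequential machine simulating $\tau$ coordinatewise: on a padded pair it emits $(\tau(\alpha'),\tau(\beta'))\delta_A^R$, with per-letter expansion bounded by $\max_{b \in B}|w_b|$. The image is exactly $L_a^\$$, since every $(\alpha,\beta) \in L \times L$ lifts through $\tau$ to some $(\alpha',\beta') \in K \times K$ with $\alpha \equiv \tau(\alpha')$, $\beta \equiv \tau(\beta')$, and $\alpha a = \beta$ in $M$ iff $\alpha' v_a = \beta'$ in $M$. Since $gsm$-images of regular languages are regular, $L_a^\$$ is regular. The case $a = \varepsilon$ reduces via Proposition~\ref{22} to the regularity of $\Delta_L$, which is immediate from the regularity of $L$. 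The main obstacle I anticipate is the bookkeeping needed to translate cleanly between the right-padded encodings and the coordinatewise action of the $gsm$, particularly handling the $\$$ padding when the two coordinates have different $\tau$-image lengths and reconciling the $\delta_B^R$ padding on the $B$-side with the $\delta_A^R$ padding on the $A$-side; once this is settled, $(A, L)$ is the required automatic structure for $M$.
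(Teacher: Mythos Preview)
The paper does not prove this proposition; it is quoted from \cite{ABC} (and ultimately from Duncan--Robertson--Ru\v{s}kuc \cite{amcg}) as a known result, so there is no in-paper proof to compare against. Your outline is the standard argument from that literature: transport the automatic structure along a letter-to-word substitution $\tau:B^*\to A^*$, using bounded length-difference to keep the padded relations synchronous and a $gsm$ to push them through $\tau$.

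There is one genuine slip. You claim that the case $a=\varepsilon$ reduces to the regularity of $\Delta_L$. This is false: $L_=^\$=\{(\alpha,\beta)\delta_A^R:\alpha,\beta\in L,\ \alpha=\beta\text{ in }M\}$ equals $\Delta_L$ only when $L$ maps injectively into $M$, and $L=\tau(K)$ has no reason to be injective (neither $K$ nor $\tau|_K$ need be). The fix is immediate and already implicit in your general step: take $v_\varepsilon=\varepsilon$, so that $R_{v_\varepsilon}=K_=^\$$, which is regular because $(B,K)$ is automatic, and then transport it by the same $gsm$ to obtain $L_=^\$$.

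The $gsm$ bookkeeping you flag is real but routine once one observes that for $(\alpha',\beta')\delta_B^R\in R_{v_a}$ the quantity $\big||\alpha'|-|\beta'|\big|$ is bounded by $|v_a|$, hence $\big||\tau(\alpha')|-|\tau(\beta')|\big|$ is bounded by $|v_a|\cdot\max_{b\in B}|w_b|$; a finite buffer in the $gsm$ absorbs the per-letter mismatch between $|w_{b_1}|$ and $|w_{b_2}|$ and re-emits correctly $\delta_A^R$-padded output. With the $\varepsilon$-case corrected, your plan goes through.
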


\begin{definition}\rm{(\cite{ABC})} Let $(A,L)$ be an automatic structure for $S$. We say $(A,L)$ is an automatic structure with uniqueness for $S$ if $L$ maps one-to-one to $S$.
\end{definition}

\begin{proposition}\label{uni}\rm{(\cite{ABC})} If $S$ is a semigroup with an automatic structure $(A,L)$, then there exists an automatic structure $(A,K)$ with uniqueness for $S$ with $K\subseteq L$.
\end{proposition}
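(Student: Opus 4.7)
The plan is to construct $K$ as the set of shortlex-least representatives from $L$. Fix any total order on $A$, and let $\leq_{slex}$ denote the induced shortlex order on $A^*$ (shorter words precede longer ones, ties broken lexicographically). Define
\[
K := \{\alpha \in L : \text{for all } \beta \in L \text{ with } \phi(\alpha)=\phi(\beta),\ \alpha \leq_{slex} \beta\}.
\]
By construction the restriction of $\phi$ to $K$ is injective, and since every element of $S$ has at least one (hence a shortlex-least) representative in $L$, the restriction is also onto $S$. So if $K$ witnesses regularity of the required padded relations, then $(A,K)$ is an automatic structure with uniqueness.

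First I would check that $K$ is regular. The relation
\[
R := \{(\alpha,\beta)\delta_A^R : \beta <_{slex} \alpha\}
\]
is recognized by a small DFSA that tracks (i) whether any padding symbol $(x,\$)$ or $(\$,y)$ has been read, which settles the length comparison, and (ii) in the so-far equal-length prefix, the first position where the two coordinates differ. Since $L_\varepsilon^\$$ is regular by the assumed automatic structure, the intersection $L_\varepsilon^\$ \cap R$ is regular. Its first-coordinate projection—realised by a $gsm$ of the type recalled in Section~2 that outputs the first component of every label and erases a first-coordinate $\$$—is a regular subset of $A^*$. Call this projection $N$; it is precisely the set of $\alpha\in L$ that are not shortlex-minimal in their $\phi$-fibre, so $K = L \setminus N$ is regular.

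Next I would verify that $K_a^\$$ is regular for every $a \in A \cup \{\varepsilon\}$. The padded product
\[
\Pi := \{(\alpha,\beta)\delta_A^R : \alpha \in K,\ \beta \in K\}
\]
is regular, because a DFSA can simulate the DFSA for $K$ on each coordinate in parallel, using the $\$$-symbols to signal the end of one word and then continue running the other tracker alone. Intersecting with the regular language $L_a^\$$ gives
\[
K_a^\$ = L_a^\$ \cap \Pi,
\]
which is therefore regular. Hence $(A,K)$ is an automatic structure with uniqueness contained in $L$.

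The main obstacle is the bookkeeping required to formalise the $gsm$-based projection and the padded product of two automata; once the standard closure lemmas for regular languages under $\delta_A^R$-padding, intersection, complement, and $gsm$-mapping are in hand, nothing deeper is required, and no length-difference bound on $L_\varepsilon^\$$ or $L_a^\$$ is needed.
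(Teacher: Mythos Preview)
The paper does not supply its own proof of this proposition; it is quoted from \cite{ABC} and used as a black box. Your argument is the standard shortlex-minimal-representative construction and is correct: the regularity of $R$, the closure of regular padded languages under intersection, the preservation of regularity by $gsm$ mappings (projection onto the first coordinate), and the identity $K_a^\$ = L_a^\$ \cap (K\times K)\delta_A^R$ together give exactly what is needed. This is essentially the proof given in the cited reference, so there is no meaningful divergence to discuss.
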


\subsection{Gr\"{o}bner-Shirshov bases for associative algebras}

To show that a semigroup $S$ is automatic, one needs to find an automatic
structure $(A,L)$, where $A$ is a generating set of $S$ and $L\subset A^+$.
After given a generating set $A$ of $S$,  one usually takes a normal form,
say, $L$, of $S$ to test whether $(A,L)$
is an automatic structure or not. It is known that Gr\"{o}bner-Shirshov
bases theory is a special tool to find formal forms for semigroups.

Let $A$ be a well-ordered set and $F$ be a field. We denote
$F\langle A\rangle$ the free associative algebra over $F$ generated
by $A$.

A well ordering $<$ on $A^*$ is called monomial if for any
$u, v, w\in A^*$, we have
$$
u < v \Rightarrow wu < wv,\ uw < vw.
$$

A classical example of monomial ordering on $A^*$ is the
deg-lex ordering, which first compare two words by degree (length) and
then by comparing them lexicographically.

Let $A^*$ be with a monomial ordering $<$. Then, for any polynomial
$f\in F\langle A\rangle$, $f$ has the leading word $\overline{f}$.
We call $f$  monic if the coefficient of $\overline{f}$ is 1.

Let $f$ and $g$ be two monic polynomials in $F\langle A\rangle$ and
$<$ a monomial ordering on $A^*$. Then, there are two kinds of
compositions:

$ (i)$ If \ $w$ is a word such that $w=\bar{f}b=a\bar{g}$ for some
$a,\ b\in A^*$ with $|\bar{f}|+|\bar{g}|>|w|$, then the polynomial
 $ (f,g)_w=fb-ag$ is called the intersection composition of $f$ and
$g$ with respect to $w$.

$ (ii)$ If  $w=\bar{f}=a\bar{g}b$ for some $a,\ b\in A^*$, then the
polynomial $ (f,g)_w=f - agb$ is called the inclusion
composition of $f$ and $g$ with respect to $w$.

In $(f,g)_w$, $w$ is called ambiguity of the composition.

Let $R\subset F\langle A\rangle$ be a monic subset. Then the
composition $ (f,g)_w$ is called trivial modulo $(R,w)$ if $
(f,g)_w=\sum\alpha_i a_i s_i b_i$, where each $\alpha_i\in F$,
$a_i,\ b_i\in A^{*}, \ s_i\in R$ and $a_i \overline{s_i} b_i<w$.

A monic set $R\subset F\langle A\rangle$ is called a
{\bf Gr\"{o}bner-Shirshov basis} with respect to the monomial
ordering $<$ if any composition of polynomials in $R$ is trivial
modulo $R$ and the corresponding ambiguity.

The following lemma was first proved by Shirshov for free Lie
algebras \cite{Shirshov commutative, Sh62b} (see also  \cite{A,Bo2014}). Bokut \cite{At63}
specialized the approach of Shirshov to associative algebras (see
also Bergman \cite{Al}). For commutative algebras, this lemma is
known as Buchberger's Theorem (see \cite{olshanskii, Bax}).

\begin{lemma}\label{l1}
(Composition-Diamond lemma for associative algebras) \  Let $<$ be a
monomial ordering on $A^*$. Let $R\subset F\langle A\rangle$ be a
nonempty set of monic polynomials and $Id(R)$ the ideal of $F\langle
A\rangle$ generated by $R$. Then   the following statements are
equivalent:
\begin{enumerate}
\item[(1)] $R$ is a Gr\"{o}bner-Shirshov basis in $F\langle
A\rangle$.
\item[(2)] $f\in Id(R)\Rightarrow \bar{f}=a\bar{s}b$
for some $s\in R$ and $a, b\in  A^*$.
\item[(3)] $Irr(R) = \{ u \in A^*\ |\  u \neq a\bar{s}b,\ s\in R,\ a, b \in A^*\}$
is a $F$-basis of the algebra $F\langle A | R \rangle:=F\langle
A\rangle/Id(R)$.
\end{enumerate}
\end{lemma}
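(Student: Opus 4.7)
The plan is to establish the three-way equivalence via the cycle $(1)\Rightarrow(2)\Rightarrow(3)\Rightarrow(1)$, concentrating almost all of the technical work in the first implication. The remaining two steps are essentially bookkeeping based on the well-ordering of $A^*$.

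For $(2)\Rightarrow(3)$, I would first show that $Irr(R)$ spans $F\langle A|R\rangle$ by transfinite induction on $\bar{f}$: if $\bar{f}\in Irr(R)$, pass to $f-\alpha\bar{f}$ where $\alpha$ is the leading coefficient; otherwise $\bar{f}=a\bar{s}b$ for some $s\in R$, and $f-\alpha asb$ has strictly smaller leading word. Well-ordering forces termination. Linear independence is immediate: a nontrivial $F$-combination $\sum\alpha_i u_i$ of distinct $u_i\in Irr(R)$ has leading word some $u_j\in Irr(R)$, contradicting $(2)$. For $(3)\Rightarrow(1)$, given a composition $h=(f,g)_w$ with $f,g\in R$, we already have $h\in Id(R)$ and $\bar{h}<w$ by definition. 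Reducing $h$ modulo $R$ exactly as in the spanning argument produces an expression $h=\sum_i\alpha_i a_i s_i b_i+r$ with $a_i\overline{s_i}b_i\le\bar{h}<w$ and $\bar{r}\in Irr(R)$; since $h\in Id(R)$, the image of $r$ in the quotient vanishes, and $(3)$ forces $r=0$, giving precisely the triviality required of $(1)$.

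The technical heart is $(1)\Rightarrow(2)$. Given $f\in Id(R)$, write $f=\sum_{i=1}^k\alpha_i a_i s_i b_i$ with each $s_i\in R$, set $w_i:=a_i\overline{s_i}b_i$, $w:=\max_i w_i$, and $n:=\#\{i:w_i=w\}$. I would induct on the pair $(w,n)$ in the lex order on (well-ordered word, nonnegative integer). If $n=1$, then $\bar{f}=w=a_1\overline{s_1}b_1$ and we are done. Otherwise, pick two indices $1,2$ with $w_1=w_2=w$. The occurrences of $\overline{s_1}$ and $\overline{s_2}$ inside $w$ either overlap (intersection composition), one contains the other (inclusion composition), or are disjoint. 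In the first two cases, the triviality modulo $(R,w)$ of the composition $(s_1,s_2)_w$ assumed by $(1)$ rewrites $\alpha_1 a_1 s_1 b_1+\alpha_2 a_2 s_2 b_2$ as a combination of terms $c_j t_j d_j$ with $c_j\overline{t_j}d_j<w$. In the disjoint case, a routine commutation argument, using $s_i=\overline{s_i}+(\text{lower terms})$, produces the same reduction. In every case either $n$ strictly decreases at fixed $w$, or $w$ itself strictly decreases, so the induction terminates with the desired $\bar{f}=a\bar{s}b$.

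The principal obstacle lies in the case analysis of $(1)\Rightarrow(2)$: one must verify meticulously that after the rewriting step each new monomial is strictly below $w$, and that the inductive parameter $(w,n)$ strictly decreases in all three geometric configurations (overlap, inclusion, disjoint). Once this is set up, the remaining directions fall out routinely from well-ordering together with the spanning and linear-independence arguments sketched above.
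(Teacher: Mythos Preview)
The paper does not supply its own proof of this lemma: it is quoted as a classical result, with references to Shirshov, Bokut, Bergman, and Buchberger, and the text moves on immediately after the statement. Your sketch is the standard argument found in those references (Bergman's diamond lemma / Bokut's formulation) and is correct. The cycle $(1)\Rightarrow(2)\Rightarrow(3)\Rightarrow(1)$, the well-founded induction on $(w,n)$ for $(1)\Rightarrow(2)$ with the overlap/inclusion/disjoint trichotomy, and the reduction-plus-independence arguments for the other two implications are exactly the customary proof. One small point worth making explicit when you write it out: in $(1)\Rightarrow(2)$ you should assume $f\neq 0$ (so that some representation exists with $k\geq 1$ and hence $n\geq 1$), and it is cleanest to phrase the induction as ``choose a representation minimizing $(w,n)$; if $n\geq 2$ derive a contradiction,'' since the well-ordering guarantees such a minimum exists.
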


If a subset $R$ of $F\langle A \rangle$ is not a
Gr\"{o}bner-Shirshov basis then one can add all nontrivial
compositions of polynomials of $R$ to $R$. Continue this process
repeatedly, we finally obtain a Gr\"{o}bner-Shirshov basis
$R^{comp}$ that contains $R$. Such a process is called
Shirshov's algorithm.

Let $M=sgp\langle A|R\rangle=A^*/\rho(R)$ be a monoid with the identity $\varepsilon$ (the empty word), where $\rho(R)$ is the congruence on $A^*$ generated by $R$. Then $R$ is also a subset
of $F\langle A \rangle$ and we can find a Gr\"{o}bner-Shirshov basis
 $R^{comp}$. We also call $R^{comp}$ a \emph{Gr\"{o}bner-Shirshov basis in $M$}. $Irr(R^{comp})=\{u\in A^*\ |\ u\neq
a\overline{f}b,\ a,\ b \in A^*,\ f\in R^{comp}\}$ is a $F$-basis of
$F\langle A|R\rangle$ which is also a set of  normal forms of
$M$.

Let $sgp^+\langle A|R\rangle=A^+/\rho(R)$ be a semigroup (possibly without identity) generated by $A$ with defining relations $R$, where $\rho(R)$ is the congruence on $A^+$ generated by $R$.
If $R=\{u_{i}=v_{i}|i\in I\}\ (u_{i},v_{i}\in A^+,\ u_{i}>v_{i}$ for any $i\in I$) is a Gr\"{o}bner-Shirshov basis, then
$$
L:=A^{+}-A^{*}\{u_{i}|i\in I\}A^{*}
$$
is  a set of  normal forms of
$M$ which is also called a normal form of $M$. In particular, if $(A,L)$ is an automatic structure for $M$ then $(A,L)$ is also a prefix-automatic structure for $M$.

\section{Main results}

In this section, we denote $sgp\langle A|R\rangle$  the monoid generated by $A$ with defining relations $R$ and $sgp^+\langle A|R\rangle$  the semigroup (possibly without identity) generated by $A$ with defining relations $R$.
Suppose $A$ is a well-ordered set. We use the deg-lex ordering on $A^*$ if we mention Gr\"{o}bner-Shirshov bases. Moreover, if $u=v\in R$, then $u>v$.

 Suppose that $S= sgp^+\langle A|R\rangle$. Then define
$$
S^1=sgp^+\langle A,e|R, ea=ae=a, ee=e, a\in A\rangle.
$$
Clearly, $e$ is  the identity of $S^1$.

\subsection{Some (prefix-)automatic semigroups}
\begin{lemma}\label{preaut}
Let $S=sgp^+\langle A|R\rangle$. Suppose  $L\subseteq A^+$ and $(A, L)$ is a  prefix-automatic structure for $S$. Let $B=A\cup\{e\}$ and $K=L\cup\{e\}$. Then $(B, K)$ is a prefix-automatic structure for $S^1$.
\end{lemma}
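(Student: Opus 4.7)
The plan is to verify the two conditions defining prefix-automaticity of $(B,K)$ for $S^1$: first, that $(B,K)$ is an automatic structure, i.e.\ $K_b^\$$ is regular for every $b\in B\cup\{\varepsilon\}$, and second, that $K'_{=}$ is regular. I begin with the immediate observations that $K=L\cup\{e\}$ is regular (union of a regular set with a singleton) and $\phi(K)=\phi(L)\cup\{e\}=S\cup\{e\}=S^1$.

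For each $b$, my strategy is to split $K_b^\$$ according to whether the two coordinates lie in $L$ or equal $e$. The diagonal $K_\varepsilon^\$$ is $\Delta_L\cup\{(e,e)\}$, regular by Proposition~\ref{22}. For $b=e$, since $e$ acts as identity, $K_e^\$$ equals $\{(e,e)\}\cup\{(\alpha,\beta)\delta_A^R:\alpha,\beta\in L,\ \phi(\alpha)=\phi(\beta)\}$; because $L\subseteq Pref(L)$, the second piece is $L'_{=}\cap\{(\alpha,\beta)\delta_A^R:\alpha,\beta\in L\}$, an intersection of regular languages. For $b=a\in A$, the sub-case $\alpha,\beta\in L$ is the regular $L_a^\$$; the sub-case $\alpha=e,\beta\in L$ amounts to $\{(e,\beta)\delta_B^R:\beta\in L_a\}$ where $L_a:=\{\beta\in L:\phi(\beta)=a\}$, regular provided $L_a$ is (a $gsm$ prepending $(e,\cdot)$ finishes it); the sub-case $\alpha\in L,\beta=e$ is empty unless some $L$-word represents the identity of $S^1$, in which case it is handled symmetrically; and $\alpha=\beta=e$ contributes only a finite piece. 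Finally, $K'_{=}$ is decomposed analogously using $Pref(K)=Pref(L)\cup\{e\}$: the dominant piece is $L'_{=}$ itself and all remaining pieces are finite or extractable from regular languages by the same device.

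The crucial step, where prefix-automaticity enters essentially, is the regularity of $L_a$. I would fix any representative $\beta_a\in L$ with $\phi(\beta_a)=a$ (which exists because $\phi(L)=S$, and necessarily lies in $Pref(L)$), and write
$$L_a=\pi_1\bigl(L'_{=}\cap\{(\alpha,\beta_a)\delta_A^R:\alpha\in A^+\}\bigr),$$
where $\pi_1$ denotes first-coordinate projection; this is regular by closure of regular languages under intersection and under $gsm$-images. The same ``freeze a coordinate and project'' device, applied to $L_a^\$$ together with a fixed $L$-representative of the identity, handles the $\beta=e$ sub-case when relevant. The main obstacle is precisely the regularity of preimage sets such as $L_a$: this is not guaranteed by ordinary automaticity but follows directly from the prefix-automatic hypothesis on $(A,L)$; once it is in hand, the rest of the verification reduces to routine closure properties---finite union, intersection, and $gsm$-image.
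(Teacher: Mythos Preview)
Your proof is correct and follows essentially the same route as the paper, though the paper is far terser: it simply asserts ``it is easy to see that $(B,K)$ is an automatic structure for $S^1$'' and then writes out only $K'_{=}=\{(e,e)\}\cup L'_{=}$.

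Two small corrections are worth making. First, your claim that the regularity of $L_a=\{\beta\in L:\phi(\beta)=a\}$ ``is not guaranteed by ordinary automaticity but follows directly from the prefix-automatic hypothesis'' is mistaken: ordinary automaticity already suffices. Your own freeze-and-project device works equally well with $L_{=}^{\$}$ (which is part of the automatic structure) and a representative $\beta_a\in L$, in place of $L'_{=}$ and $\beta_a\in Pref(L)$. Prefix-automaticity is genuinely needed only at the very end, where $L'_{=}$ is used directly to obtain the regularity of $K'_{=}$. Second, your hedge about the sub-case $\alpha\in L$, $\beta=e$ (``empty unless some $L$-word represents the identity of $S^1$'') is unnecessary: by the paper's definition $S^1$ adjoins a \emph{new} identity $e$ disjoint from $S$, so no word in $A^+$ can represent $e$ in $S^1$, and this sub-case is always empty.
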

\begin{proof}
It is easy to see that $(B, K)$ is an automatic structure for $S^1$, where $e$ maps to the identity of $S^1$. Since $(A, L)$ is a  prefix-automatic structure for $S$, we have
$$
L'_==\{(\alpha,\beta)\delta_A^R|\alpha\in L,\ \beta\in Pref(L),\ and\ \alpha=\beta\}
$$
is regular. Thus,
\begin{eqnarray*}
K'_=&=&\{(\alpha, \beta)\delta_B^R|\alpha\in K,\ \beta\in Pref(K),\ and\ \alpha=\beta\}=\{(e,e)\}\cup L_='
\end{eqnarray*}
is regular. So, $(B, K)$ is a prefix-automatic structure for $S^1$.
\end{proof}

\begin{theorem}\label{3.1.1}
Let $S=sgp^{+}\langle A|u_{i}=v_{i},1\leq i\leq m\rangle$, where $A=\{a_{1},a_{2},\dots,a_{n}\},\ n\in\mathbb{N}$, $u_i, v_i\in A^+$, $i=1,2,\dots,m$ and $\{u_{i}=v_{i}|1\leq i \leq m\}$ is a Gr\"{o}bner-Shirshov basis.
\begin{enumerate}
\item[(i)]\
If $v_{i}(t)\not\equiv u_{j}[t]$ for any $t\geq1$, $i,j\in\{1,2,\dots,m\}$, then S is prefix-automatic.

\item[(ii)]\
If $v_{i}(t)\not\equiv u_{j}[t]$ and $v_{i}[t]\not\equiv u_{j}(t)$ for any $t\geq1$, $i,j\in\{1,2,\dots,m\}$, then $S$ is biautomatic.
\end{enumerate}
\end{theorem}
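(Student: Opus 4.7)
My plan is to take $L := A^+ \setminus A^*\{u_i\}_{i=1}^m A^*$ as the candidate (prefix-)automatic structure for $S$. Since $\{u_i\}$ is finite, $L$ is regular, and Lemma~\ref{l1} guarantees that $L$ bijects with $S$. Because any nonempty prefix of a word in $L$ again lies in $L$ (such a prefix cannot contain a $u_i$ as subword), we have $\mathrm{Pref}(L)\subseteq L\cup\{\varepsilon\}$, which makes $L'_=$ coincide with $L_=$ up to a finite correction; combined with Proposition~\ref{22} this already gives regularity of $L_=$ and $L'_=$. Therefore the nontrivial task for part (i) is to show $L_a^\$$ is regular for each $a\in A$ (the case $a=\varepsilon$ reduces to $L_=$), and for part (ii) I additionally need $\leftidx{_a^\$}{L}$ to be regular, after which Proposition~\ref{sam} will supply the other two required relations from bounded length differences.

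For the regularity of $L_a^\$$ under hypothesis (i), the heart of the argument is the following \emph{bounded-change lemma}: there is a constant $K$, depending only on $R$, such that for every $\alpha\in L$ and $a\in A$, if $\beta\in L$ is the normal form of $\alpha a$, then $\alpha$ and $\beta$ agree on a prefix of length at least $|\alpha|-K$ and $\bigl||\alpha|-|\beta|\bigr|\le K$. To prove it, I would argue: if $\alpha a\in L$ take $\beta=\alpha a$; otherwise, since $\alpha\in L$, any $u_{i_1}$-subword of $\alpha a$ must be a suffix ending at $a$, so $\alpha a=\gamma u_{i_1}\to\gamma v_{i_1}$ for some prefix $\gamma$ of $\alpha$. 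I then claim that no subsequent rewrite can straddle the boundary between $\gamma$ and the current reduced suffix: for any straddle $u_j=cd$ with $c$ a nonempty suffix of $\gamma$ and $d$ a nonempty prefix of the current reduced suffix, isolating the outermost $v_{i_k}$ contributing to that prefix and applying hypothesis (i) to the pair $(v_{i_k},u_j)$ produces the forbidden equality $v_{i_k}(t)\equiv u_j[t]$ for some $t\ge 1$. Since every $v_i$ has length at most $M:=\max_i|u_i|$ and reductions are length-non-increasing under deg-lex, the whole reduction stays inside a suffix of bounded length, yielding the uniform $K$.

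Given this lemma, the automaton for $L_a^\$$ is routine: scan $(\alpha,\beta)\delta_A^R$ synchronously, use the DFSA for $L$ on each component, verify equality on the common long prefix, and verify the final $\le K$ letters by table lookup over finitely many possibilities. For part (ii), hypothesis (ii) is the exact left-right mirror of (i); the mirror bounded-change lemma, replacing $v_i(t)\not\equiv u_j[t]$ with $v_i[t]\not\equiv u_j(t)$, yields regularity of $\leftidx{_a^\$}{L}$ via the symmetric $\delta_A^L$-automaton construction, after which bounded length change in both directions lets Proposition~\ref{sam} deliver regularity of $\leftidx{^\$}{L_a}$ and $\leftidx{_a}{L^\$}$, giving biautomaticity. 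The main obstacle is the bounded-change lemma: the subtle case is a $u_j$-straddle whose prefix portion $d$ reaches past the leftmost inner-replaced $v_{i_k}$ and into letters coming from deeper inside $v_{i_1}$, where hypothesis (i) does not immediately apply to the outermost $v_{i_1}$ itself. Handling this requires the case analysis to pinpoint, in the current prefix of the reduced suffix, a maximal initial block coinciding with some $v_{i_k}$ to which hypothesis (i) can then be applied, and to iterate this identification until all overlap cases are exhausted.
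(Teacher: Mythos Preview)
Your approach is the same as the paper's---take $L=A^+\setminus A^*\{u_i\}A^*$, check $L'_= = L^\$_= = \Delta_L$, and prove each $L_{a}^\$$ (and, for (ii), each $\leftidx{_a^\$}{L}$) is regular---but you are working much harder than necessary on the bounded-change lemma, and this is exactly where your self-identified ``subtle case'' comes from.

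The point you are missing is that under hypothesis (i) a \emph{single} rewrite already lands in $L$. If $\alpha\in L$ and $\alpha a\notin L$, then the only forbidden subword of $\alpha a$ is a suffix $u_i$; writing $\alpha a=\gamma u_i$ one has $\gamma\in L$ (prefix of $\alpha$), $v_i\in L$ (each $v_i$ may be taken irreducible---replace it by its normal form without changing the presented semigroup or the hypothesis), and no $u_j$ can straddle $\gamma\,|\,v_i$: a straddle would give a nonempty prefix $d$ of $v_i$ equal to a suffix of $u_j$, i.e.\ $v_i(|d|)\equiv u_j[|d|]$, contrary to (i). Hence $\gamma v_i\in L$ is already the normal form. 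There is no second rewrite, so your ``$d$ reaching past the leftmost $v_{i_k}$'' scenario never occurs, and your constant is simply $K=\max_i|u_i|$. The paper exploits this directly by writing
\[
L_{a_j}^\$=\Delta_{L\setminus\bigcup_p A^*u_{j_p}(|u_{j_p}|-1)}\cdot\{(\$,a_j)\}\ \cup\ \Bigl(\bigl(\textstyle\bigcup_p \Delta_L\cdot\{(u_{j_p}(|u_{j_p}|-1),v_{j_p})\delta_A^R\}\bigr)\cap(L\times L)\delta_A^R\Bigr),
\]
which is manifestly regular; no bespoke synchronous automaton is needed. For (ii) the mirror computation gives $\leftidx{_a^\$}{L}$, and since $\bigl||\alpha|-|\beta|\bigr|\le K$ throughout, Proposition~\ref{sam} upgrades right-padding to left-padding (and vice versa), exactly as you say.
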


\begin{proof}
 Since $\{u_{i}=v_{i}|1\leq i \leq m\}$ is a Gr\"{o}bner-Shirshov basis, by Lemma \ref{l1}, we know
$$
L=A^{+}-A^{*}\{u_{i}|1\leq i\leq m\}A^{*}
$$
is a normal form of $S$. Therefore, $L$ maps onto $S$. Obviously, $L$ is a regular language.

If (i) holds, then we show that  $(A,L)$ is a prefix-automatic structure for $S$.

Let $W_j=\{u_i\mid u_i[1]=a_j,1\leq i\leq m\}=\{u_{j_1},u_{j_2},\dots,u_{j_{s_j}}\},\ j=1,2,\dots,n$. By Proposition \ref{22}, $L_{=}^{\$}=\Delta_{L}$ is regular.

Now, by Proposition \ref{22}, $L_{a_{j}}^{\$}=\{(\alpha,\alpha a_{j})\delta_A^{R}|\alpha\in L\}=\Delta_{L}\cdot \{(\$,a_{j})\}$ is regular if $W_j=\emptyset$ and
\begin{eqnarray*}
L_{a_{j}}^{\$}&=&\{(\alpha,\alpha a_{j})\delta_A^{R}|\alpha\in L-\cup_{p=1}^{s_{j}}A^{*}\{u_{j_{p}}(|u_{j_{p}}|-1)\}\}\\
&&\ \ \cup(\cup_{p=1}^{s_{j}}\{(\alpha u_{j_{p}}(|u_{j_{p}}|-1),\ \alpha v_{j_{p}})\delta_A^{R}|\alpha u_{j_{p}}(|u_{j_{p}}|-1)\in L\})\\
&=& \Delta_{L-\cup_{p=1}^{s_j}A^{*}\{u_{j_p}(|u_{j_{p}}|-1)\}}\cdot\{(\$,a_{j})\}\\
&&\ \ \cup((\cup_{p=1}^{s_{j}}\Delta_{L}\cdot\{(u_{j_{p}}(|u_{j_{p}}|-1),v_{j_{p}})\delta_{A}^{R}\})\cap(L\times L)\delta_A^R)
  \end{eqnarray*}
is regular  if $W_j\neq\emptyset$.

Since $L$ is closed under prefix words, $L_='=\Delta_L$ is regular.

  Hence, $S$ is prefix-automatic.

If (ii) holds, then we prove that $(A,L)$ is a biautomatic structure for $S$.

 Let $W'_j=\{u_i\mid
u_i(1)=a_j,\ 1\leq i\leq m\}=\{u_{j_1},u_{j_2},\dots,u_{j_{t_j}}\},\ j=1,2,\dots,n$. Then, by Proposition \ref{22}, $^{\$}L_{=}=\Delta_{L}$ is regular.
Note that
 $$\leftidx{^\$_{a_j}}{L}=\{(\alpha,a_{j}\alpha)\delta_A^{L}|\alpha\in L\}=\{(\$,a_{j})\}\cdot\Delta_{L}$$
 is regular if $W'_j=\emptyset$ and
\begin{eqnarray*}
  \leftidx{_{a_j}^\$}{L}&=&\{(\alpha,a_{j}\alpha)\delta_A^{L}|\alpha\in L-\cup_{p=1}^{t_{j}}\{u_{j_{p}}[|u_{j_{p}}|-1]\}A^{*}\}\\
&&\ \ \cup(\cup_{p=1}^{t_{j}}\{(u_{j_{p}}[|u_{j_{p}}|-1]\alpha, v_{j_{p}}\alpha)\delta_A^{L} |u_{j_{p}}[|u_{j_{p}}|-1]\alpha \in L\})\\
&=& \{(\$,a_{j})\}\cdot\Delta_{L-\cup_{p=1}^{t_j}\{u_{j_p}(|u_{j_{p}}|-1)\}A^{*}}\\
&&\ \ \cup((\cup_{p=1}^{t_{j}}\{(u_{j_{p}}[|u_{j_{p}}|-1],v_{j_{p}})\delta_{A}^{L}\}\cdot\Delta_{L})\cap(L\times L)\delta_A^L)
\end{eqnarray*}
is regular if $W'_j\neq\emptyset$.

Hence, $L_{=}^{\$},\ ^{\$}L_{=},\ L_{a_{j}}^{\$}\ and\ ^{\$}_{a_{j}}L\ (j=1,2,\dots,n)$ are regular.

Let $N=\displaystyle{\mathop{max}_{1\leq i\leq m}}\{|u_{i}|-|v_{i}|\}$. For any $(\alpha,\beta)\delta_A^{R}\in L_{a_{j}}^{\$}\ (j=1,2,\dots,n)$, since $||\alpha|-|\beta||\leq N+1$, by Proposition \ref{sam}, we have $^{\$}L_{a_{j}}\ (j=1,2,\dots,n)$ is regular. Similarly, $ _{a_{j}}L^{\$}\ (j=1,2,\dots,n)$ is regular.

Hence, $(A,L)$ is a biautomatic structure for $S$.
\end{proof}

  \ \

Now, we consider some one-relator semigroups.

\begin{theorem}\label{3.1.3}
 Let $S=sgp^{+}\langle a_1,a_2,\dots,a_n|a_{i_1}a_{i_2}\cdots a_{i_k}=a_{j_1}a_{j_2}\cdots a_{j_k}\rangle$, where $n\in\mathbb{N}$, $k\geq2$, $a_{i_l}, a_{j_l}\in \{a_1, a_2, \dots, a_n\}$, $l=1,2,\dots, k$ and $\{a_{i_1}a_{i_2}\cdots a_{i_k}=a_{j_1}a_{j_2}\cdots a_{j_k}\}$ is a Gr\"{o}bner-Shirshov basis. If $S$ satisfies one of the following conditions, then $S$ is biautomatic and prefix-automatic.
\begin{enumerate}
\item[(1)] For any $t\geq1,\ (a_{j_1}a_{j_2}\cdots a_{j_k})(t)\not\equiv (a_{i_1}a_{i_2}\cdots a_{i_k})[t]$\ and\ $(a_{j_1}a_{j_2}\cdots a_{j_k})[t]\not\equiv (a_{i_1}a_{i_2}\cdots a_{i_k})(t)$;

\item[(2)] $|con(a_{i_1}a_{i_2}\cdots a_{i_k})|=k$;

\item[(3)]\ $k=2$;

\item[(4)]\ $con(a_{i_1}a_{i_2}\cdots a_{i_k})\nsubseteq con(a_{j_1}a_{j_2}\cdots a_{j_k})$;

\item[(5)]\ $a_{i_1}a_{i_2}\cdots a_{i_k} \not\equiv ww'w$ and $a_{j_1}a_{j_2}\cdots a_{j_k} \not\equiv ss's$ for any $w,s\in \{a_1,a_2,\dots,a_n\}^{+},w',$ $s'\in \{a_1,a_2,\dots,a_n\}^{*}$.
\end{enumerate}
\end{theorem}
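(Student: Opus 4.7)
Set $u \equiv a_{i_{1}}a_{i_{2}}\cdots a_{i_{k}}$ and $v \equiv a_{j_{1}}a_{j_{2}}\cdots a_{j_{k}}$, so the single defining relation is $u=v$ with $u>v$ under deg-lex and $|u|=|v|=k$. Because $\{u=v\}$ is a Gr\"{o}bner--Shirshov basis, Lemma~\ref{l1} supplies the regular normal form $L = A^{+}-A^{*}\{u\}A^{*}$ of $S$. Since $L$ is closed under taking prefixes, the remark at the end of Section~2.3 tells us that once $(A,L)$ is shown to be a biautomatic structure for $S$, the same $(A,L)$ is automatically a prefix-automatic structure. Thus in every case it is enough to exhibit the biautomatic structure.

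Case~(1) is exactly the hypothesis of Theorem~\ref{3.1.1}(ii), so the conclusion is immediate. For case~(3), the relations $u=v$ with $|u|=|v|=2$ and $u>v$ form a short finite list of shapes (up to relabelling: $ba=ab$, $ab=aa$, $bb=ab$, $bb=ba$, $bb=aa$, $ba=aa$, and their obvious multi-generator variants). For each shape the normal forms and the effect of left/right multiplication by any generator can be written down explicitly, and the padded pair-languages $L_{a}^{\$},\ {}^{\$}L_{a},\ {}_{a}L^{\$},\ {}^{\$}_{a}L$ are then easily checked to be regular by a short direct computation.

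For cases~(2), (4), and~(5) condition~(ii) of Theorem~\ref{3.1.1} can genuinely fail, so the biautomatic structure has to be produced by direct analysis of the reduction process. The central observation is that if $\alpha\in L$ and $\alpha a$ acquires an occurrence of $u$, then the rewrite $u\to v$ can spawn a new occurrence of $u$ only across the boundary between the surviving prefix of $\alpha$ and the freshly-written $v$, namely when $u[t]\equiv v(t)$ for some $1\le t\le k-1$; the symmetric condition $v[t]\equiv u(t)$ controls left multiplication. The plan is to use each hypothesis to force the resulting cascade into a regular pattern: under~(2), distinctness of the letters of $u$ forces the overlap $u[t]\equiv v(t)$ (if it exists) to be a rigid block of $k$ distinct letters and the cascade follows a single periodic cycle, the typical picture being $(ab)^{m}c \mapsto c(ba)^{m}$ for the relation $abc=cba$; under~(4), each rewrite destroys an occurrence of a letter $a^{*}\in con(u)\setminus con(v)$, so the cascade advances monotonically and can be tracked by a finite-state machine locating the nearest $a^{*}$; under~(5), the absence of the $ww'w$-shape in $u$ and $v$ means the newly-written $v$ cannot overlap with itself, so each cascade step is isolated and the process propagates leftwards one step at a time. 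In every subcase the pair-language is assembled from $\Delta_{L}$ and a finite overlap template via the closure operations of Propositions~\ref{qua} and~\ref{sam} and Lemma~\ref{qup}; because the length difference $||\alpha|-|\beta||$ is at most $1$ throughout (as $|u|=|v|$), Proposition~\ref{sam} converts the right-padded relations to left-padded ones, yielding the full biautomatic structure.

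The main obstacle is case~(2): distinctness of the letters of $u$ on its own neither rules out the overlap $u[t]\equiv v(t)$ nor shortens the cascade, and the $abc=cba$ example confirms that cascades of unbounded length do occur. The delicate step is to show that such an overlap, when it arises, is rigid enough (typically forcing a single value of $t$ and a fixed repeating block of letters) for the input/output pair $(\alpha,\beta)$ to be captured by an asynchronous automaton of bounded size. Cases~(4) and~(5) follow by essentially the same cascade analysis but are combinatorially more transparent because the distinguished letter (respectively, the non-self-overlap property) gives an explicit bound on the cascade.
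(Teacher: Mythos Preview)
Your setup and case~(1) are fine, and you correctly see that the whole problem reduces to controlling the rewriting cascade triggered by right-multiplication by $a_{i_k}$ (respectively left-multiplication by $a_{i_1}$). But the proposal stops short of actually carrying out cases~(2)--(5): you describe an intended mechanism in each case (``rigid block'', ``destroys an occurrence of $a^{*}$'', ``cannot overlap with itself''), then explicitly concede that case~(2) is an obstacle and leave the delicate step unproved. This is a genuine gap, and the step-by-step cascade-tracking plan you outline is harder than necessary.

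The paper treats (2)--(5), including $k=2$, by a single uniform device rather than separate cascade arguments or a $k=2$ enumeration. Let $t_{1},\dots,t_{l}$ list all overlaps $v(t)\equiv u[t]$. Every $\alpha\in L$ ending in $u(k-1)$ factors as $\alpha\equiv\alpha'\cdot w$, where $\alpha'\in L$ ends in none of the $u(k-t_{i})$ and $w$ is an arbitrary concatenation of the blocks $u(k-t_{1}),\dots,u(k-t_{l})$ followed by $u(k-1)$; the putative normal form is then $\beta\equiv\alpha' v$ followed by the matching concatenation of $v[k-t_{i}]$'s. This pair-language $\widetilde{W}_{R}$ is regular straight from Proposition~\ref{qua} and Lemma~\ref{qup} (Kleene star of length-preserving blocks composed with $\Delta$), with a symmetric $\widetilde{W}_{L}$ on the left. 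The only remaining task---and this is where each hypothesis is actually invoked---is the purely combinatorial check that $\beta\in L$, i.e.\ that $u$ cannot occur inside a concatenation $v[k-t_{i_{1}}]v[k-t_{i_{2}}]\cdots$. Under~(4) this is immediate since those blocks use only letters of $v$; under~(2), (3), (5) it is a short argument that such an occurrence would force, respectively, a repeated letter in $u$, the equality $u\equiv v$ (via the Gr\"obner--Shirshov hypothesis), or a border $ww'w$ in $u$ or $v$. Once $\beta,\gamma\in L$ is established one has $\widetilde{W}_{R}=W_{R}$, $\widetilde{W}_{L}=W_{L}$, and Proposition~\ref{sam} finishes; no iterative cascade analysis is needed and your separate treatment of $k=2$ is unnecessary.
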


\begin{proof}
Let $A=\{a_1,a_2,\dots,a_n\}$ and $L=A^{+}-A^{*}\{a_{i_1}a_{i_2}\cdots a_{i_k}\}A^{*}$. Since $\{a_{i_1}a_{i_2}\cdots a_{i_k}=a_{j_1}a_{j_2}\cdots a_{j_k}\}$ is a Gr\"{o}bner-Shirshov basis, $L$ is a normal form of $S$.

By Proposition \ref{22}, $L_{=}^{\$}=\Delta_{L}=^{\$}L_{=}$ and
\begin{eqnarray*}
L_{a_j}^{\$}&=&\{(\alpha,\alpha a_j)\delta_A^{R}|\alpha\in L\}=\Delta_{L}\cdot\{(\$,a_j)\},\ j\neq  i_k ,\\
^{\$}_{a_j}L&=&\{(\alpha, a_j\alpha)\delta_A^{L}|\alpha\in L\}=\{(\$,a_j)\}\cdot\Delta_{L},\  j\neq  i_1
\end{eqnarray*}
are regular. Now,
\begin{eqnarray*}
L_{a_{i_k}}^{\$}&=&\{(\alpha a_{i_1}a_{i_2}\cdots a_{i_{k-1}},\beta)\delta_A^{R}|\alpha a_{i_1}a_{i_2}\cdots a_{i_{k-1}}, \beta\in L,\alpha a_{i_1}a_{i_2}\cdots a_{i_k}=\beta\}\\
&&\ \cup\{(\alpha,\alpha a_{i_k})\delta_A^{R}|\alpha\in L-A^{*}\{a_{i_1}a_{i_2}\cdots a_{i_{k-1}}\}\},\\
^{\$}_{a_{i_1}}L&=&\{( a_{i_2}a_{i_3}\cdots a_{i_{k}}\alpha,\beta)\delta_A^{L}| a_{i_2}a_{i_3}\cdots a_{i_{k}}\alpha,\beta\in L,a_{i_1}a_{i_2}\cdots a_{i_k}\alpha =\beta\}\\
&&\ \cup\{(\alpha, a_{i_1}\alpha)\delta_A^{L}|\alpha\in L-\{a_{i_2}a_{i_3}\cdots a_{i_{k}}\}A^{*}\}.
\end{eqnarray*}
Denote
 \begin{eqnarray*}
W_R&=&\{(\alpha a_{i_1}a_{i_2}\cdots a_{i_{k-1}},\beta)\delta_A^{R}|\alpha a_{i_1}a_{i_2}\cdots a_{i_{k-1}},\beta\in L,\alpha a_{i_1}a_{i_2}\cdots a_{i_k}=\beta\},\\
W_L&=&\{( a_{i_2}a_{i_3}\cdots a_{i_{k}}\alpha,\beta)\delta_A^{L}| a_{i_2}a_{i_3}\cdots a_{i_{k}}\alpha,\beta\in L,a_{i_1}a_{i_2}\cdots a_{i_k}\alpha =\beta\}.
\end{eqnarray*}
 If $W_R$ and $W_L$ are regular, then $L_{a_{i_k}}^{\$}$ and $^{\$}_{a_{i_1}}L$ are regular. Since $a_{i_1}a_{i_2}\cdots a_{i_k}=a_{j_1}a_{j_2}\cdots a_{j_k}$ is homogeneous, for any $(\alpha,\beta)\delta_A^{R}\in L_{a_{i}}^{\$}$ (or $(\alpha,\beta)\delta_A^{L}\in$ $^{\$}_{a_i}L),\ i=1,2,\dots,n$, we have $||\alpha|-|\beta||\leq 1$. Then by Proposition \ref{sam}, we have $^{\$}L_{a_i}$ and $_{a_i}L^{\$}$ are regular. Therefore, $(A,L)$ is a biautomatic structure for $S$. That is to say, to prove $S$ is biautomatic, it is sufficient to prove that $W_R$ and $W_L$ are regular.

Now, we prove that $W_R$ and $W_L$ are regular if $S$ satisfies one of conditions $(1)-(5)$.

Denote $u\equiv a_{i_1}a_{i_2}\cdots a_{i_k}$, $v\equiv a_{j_1}a_{j_2}\cdots a_{j_k}$.

Case $1.$ If $S$ satisfies condition $(1)$, then by Theorem \ref{3.1.1}, $S$ is biautomatic.

Case $2.$ If there exist $l,l'\in \mathbb{N},\ t_1,t_2,\dots,t_l\in\{1,2,\dots,k-1\}$ and $s_1,s_2,\dots,s_{_{l'}}\in\{1,2,\dots,k-1\}$ such that $v(t_i)\equiv u[t_i],\ v[s_j]\equiv u(s_j)$ for each $i\in\{1,2,\dots,l\}$ and each $j\in\{1,2,\dots,l'\}$, then let
\begin{eqnarray*}
 \alpha &\equiv& \alpha'( u(k-t_1)^{p_{11}}u(k-t_2)^{p_{12}}\cdots u(k-t_l)^{p_{1l}} )( u(k-t_1)^{p_{21}}u(k-t_2)^{p_{22}}\cdots\\
         &&u(k-t_l)^{p_{2l}})\cdots( u(k-t_1)^{p_{c1}}u(k-t_2)^{p_{c2}}\cdots u(k-t_l)^{p_{cl}} )u(k-1),\\
\beta &\equiv& \alpha'v( v[k-t_1]^{p_{11}}v[k-t_2]^{p_{12}}\cdots v[k-t_l]^{p_{1l}})( v[k-t_1]^{p_{21}}v[k-t_2]^{p_{22}}\cdots \\
    &&v[k-t_l]^{p_{2l}})\cdots( v[k-t_1]^{p_{c1}}v[k-t_2]^{p_{c2}}\cdots v[k-t_{l}]^{p_{cl}}),\\
\sigma &\equiv& (u[k-1]( u[k-s_1]^{p_{11}'}u[k-s_2]^{p_{12}'}\cdots u[k-s_{_{l'}}]^{p_{1l'}'} )( u[k-s_1]^{p_{21}'}u[k-s_2]^{p_{22}'}\\
    &&\cdots u[k-s_{_{l'}}]^{p_{2l'}'})\cdots( u[k-s_1]^{p_{c'1}'}u[k-s_2]^{p_{c'2}'}\cdots u[k-s_{_{l'}}]^{p_{c'l'}'} )\alpha'',\\
\gamma &\equiv& (v(k-s_1)^{p_{11}'}v(k-s_2)^{p_{12}'}\cdots v(k-s_{_{l'}})^{p_{1l'}'})( v(k-s_1)^{p_{21}'}v(k-s_2)^{p_{22}'}\cdots \\
    &&v(k-s_{_{l'}})^{p_{2l'}'})\cdots( v(k-s_1)^{p_{c'1}'}v(k-s_2)^{p_{c'2}'}\cdots v(k-s_{_{l'}})^{p_{c'l'}'})v\alpha''),
\end{eqnarray*}
where $p_{11}, p_{12}, \dots,p_{1l}, \dots, p_{c1}, p_{c2}, \dots, p_{cl}\geq 0,\ c\in \mathbb{N}$, $p_{11}', p_{12}', \dots, p_{1l'}', \dots, p_{c'1}', p_{c'2}', \dots, $
$p_{c'l'}'\geq 0,\ c'\in \mathbb{N}$, and $\alpha'\in L-A^{*}\{u(k-t_1),u(k-t_2),\dots, u(k-t_l)\}$, $\alpha^{''}\in L-\{u[k-s_1],u[k-s_2],\dots, u[k-s_{_{l'}}]\}A^{*}$. Thus, $\alpha\cdot a_{i_k}=\beta$ and $a_{i_1}\cdot\sigma=\gamma$. By noting that
\begin{eqnarray*}
\widetilde{W}_{R}&=&\{(\alpha,\beta)\delta_A^{R} | \alpha\in L,\ \alpha'\in L-A^{*}\{u(k-t_1),u(k-t_2),\dots, u(k-t_l)\}\\
&&\ p_{11},\dots,p_{1l}, \dots,p_{c1},\dots,p_{cl}\geq0\}\\
&=&(\Delta_{ L-A^{*}\{u(k-t_1),u(k-t_2),\dots, u(k-t_l)\}  } \{(\varepsilon, v)\delta_{A}^{R} \} )\odot
   \{ \{(u(k-t_1),v[k-t_1])\delta_{A}^{R}\}^{*}\\
   &&\{(u(k-t_2),v[k-t_2])\delta_{A}^{R}\}^{*}\cdots\{(u(k-t_l),v[k-t_l])\delta_{A}^{R}\}^{*} \}^{*} \{(u(k-1),\varepsilon)\delta_{A}^{R}\},\\
\widetilde{W}_{L}&=&\{(\sigma,\gamma)\delta_A^{L} |\sigma\in L,\ \alpha^{''}\in L-\{u[k-s_1],u[k-s_2],\dots, u[k-s_{_{l'}}]\}A^{*}\\
&&\ \ p_{11}',\dots,p_{1l'}',\dots,p_{c'1}',\dots,p_{c'l'}'\geq0\}\\
&=&\{(u[k-1],\varepsilon)\delta_{A}^{L}\}\{\{(u[k-s_1],v(k-s_1))\delta_{A}^{L}\}^{*}\{(u[k-s_2],v(k-s_2))\delta_{A}^{L}\}^{*}\cdots \\
&&\ \ \{(u[k-s_{_{l'}}],v(k-s_{_{l'}}))\delta_{A}^{L}\}^{*} \}^{*}\odot'\{(\varepsilon,v)\delta_{A}^{L}\}\Delta_{ L-\{u[k-s_1],u[k-s_2],\dots, u[k-s_{_{l'}}]\}A^{*}  }
 \end{eqnarray*}
are regular by   Proposition \ref{qua} and Lemma \ref{qup}, we just need to prove that $\beta,\gamma\in L$.

$(i)$ Suppose $S$ satisfies condition $(2)\ |con(u)|=|u|$.

If $\beta\not\in L$, then $u$ is a subword of $\beta$  and $u$ must be of the form
$$v[k-t_{i_1}][s]v[k-t_{i_2}]\cdots v[k-t_{i_h}](s'),$$
where $t_{i_j}\in \{t_1,t_2,\dots,t_l\}$, $h\geq2$ and $s+(k-t_{i_2})+(k-t_{i_3})+\cdots+(k-t_{i_{h-1}})+s'=k$.
If $h=2$, then $u\equiv v[t]v[t'](s)$, where $0<t,t',s<k$. Obviously, $t+s=k$. Since $|con(u)|=|u|$, we have $t'\geq t+s=k$ (Otherwise $con(v[t])\cap con(v[t'](s))\neq\emptyset$ which contradicts $|con(u)|=|u|$) which contradicts $t'<k$.
If $h>3$, then $u\equiv v[t_1']v[t_2']\cdots v[t_h'](s)$. Obviously, we have $con(v[t_1'])\cap con(v[t_2'])\neq\varnothing$, a contradiction.
Hence $\beta\in L$.

If $\gamma\not\in L$, then $u$ is a subword of $\gamma$ and $u$ must be of the form
$$
v(k-s_{i_1})[t]v(k-s_{i_2})\cdots v(k-s_{i_{h'}})(t'),
$$
where $s_{i_j}\in \{s_1,s_2,\dots,s_{_{l'}}\}$, $h'\geq2$ and $t+(k-s_{i_2})+(k-s_{i_3})+\cdots+(k-s_{i_{h'-1}})+t'=k$.
If $h'=2$, then $u\equiv v(s)[t]v(s')$, where $0<s,t,s'<k$. Obviously, $t+s'=k$. Since $|con(u)|=|u|$, we have $s\geq s'+t=k$ (otherwise $con(v(s'))\cap con(v(s)[t])\neq\emptyset$, a contradiction) which contradicts $s<k$.
If $h'>3$, then $u\equiv v(s_1')[s]v(s_2')\cdots v(s_h')$. Obviously, we have $con(v(s_2'))\cap con(v(s_3'))\neq\varnothing$, a contradiction.
Hence $\gamma\in L$.

$(ii)$ Suppose $S$ satisfies condition $(3)\ |u|=2$.

If $\beta\not\in L$, then $u$ is a subword of $\beta$ and $u$ must be of the form $v[1]v[1]$. Suppose $v\equiv ca$. Then $u\equiv aa$. Since $aaa=caa=cca$ and $aaa=aca$, we have $cca=aca$. Since $\{u=v\}$ is a Gr\"{o}bner-Shirshov basis, we have $cca\equiv aca$. Then $a\equiv c$. This contradicts $u\not\equiv v$. Hence $\beta\in L$.

If $\gamma\not\in L$, then $u$ is a subword of $\gamma$ and $u$ must be of the form $v(1)v(1)$. Suppose $v\equiv ac$. Then $u\equiv aa$. Since $aaa=aca$ and $aaa=aac=acc$, we have $acc\equiv aca$ and so $a\equiv c$, that is, $u\equiv v$, a contradiction. Hence $\gamma\in L$.

$(iii)$ Suppose $S$ satisfies condition $(4)\ con(u)\nsubseteq con(v)$.

If $\beta\not\in L$  ($\gamma\not\in L$, resp.), then $u$ is a subword of $\beta$ ($\gamma$, resp.)  and $u$ must be contained in some subword of $\beta$ of the form $v[k-t_{i_1}]v[k-t_{i_2}]\cdots v[k-t_{i_h}]$ (in some subword of $\gamma$ of the form $v(k-s_{i_1})v(k-s_{i_2})\cdots v(k-s_{i_{h'}})$, resp.). This  contradicts $con(u)\nsubseteq con(v)$. Hence  $\beta,\gamma\in L$.

$(iv)$ Suppose S satisfies condition $(5)\ a_{i_1}a_{i_2}\cdots a_{i_k} \not\equiv ww'w,\  a_{j_1}a_{j_2}\cdots a_{j_k} \not\equiv ss's$ for any $w,s\in \{a_1,a_2,\dots,a_n\}^{+},w',s'\in \{a_1,a_2,\dots,a_n\}^{*}$.

 \begin{center}

\includegraphics[width=0.4\textwidth,angle=-0]{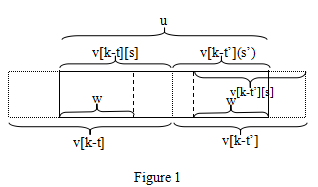}\ \ \ \ \ \ \ \
\includegraphics[width=0.4\textwidth,angle=-0]{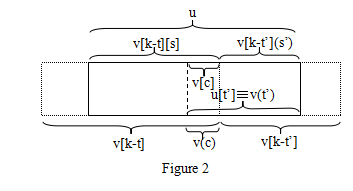}

\includegraphics[width=0.4\textwidth,angle=-0]{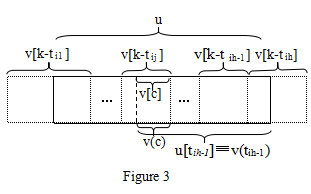}\ \ \ \ \ \ \ \
\includegraphics[width=0.4\textwidth,angle=-0]{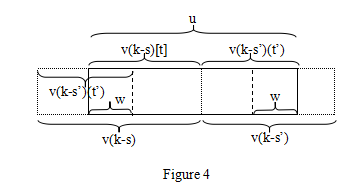}

\includegraphics[width=0.4\textwidth,angle=-0]{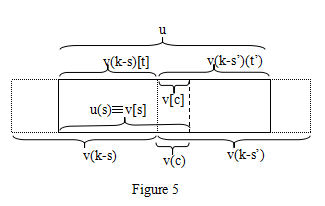}\ \ \ \ \ \ \ \
\includegraphics[width=0.4\textwidth,angle=-0]{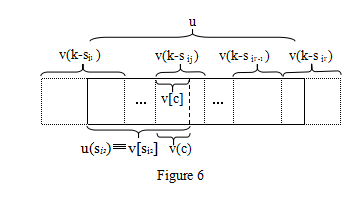}

 \end{center}

If $\beta\not\in L$, then $u\equiv v[k-t_{i_1}][s]v[k-t_{i_2}]\cdots v[k-t_{i_h}](s')$, where $t_{i_j}\in \{t_1,t_2,\dots,t_l\}$, $h\geq2$, $1\leq s\leq k-t_{i_1}$ and $1\leq s'\leq k-t_{i_h}$.

If $h=2$, then $u\equiv v[k-t][s]v[k-t'](s')$ and $s+s'=k>k-t'$. Hence $s>k-t'-s'$.

Case $1.$ If $k-t'\geq s$, then $k-t'\geq s>k-t'-s'$. Hence there exists a prefix $w$ of $v[s]$ such that $u\equiv ww'w$ for some $w'\in A^{*}$, see Figure $1$. This contradicts condition $(5)$.

Case $2.$ If $k-t'< s$, then $k-s=s'< t'$. Since $u[t']\equiv v(t')$, there exists a suffix $v[c]$ of $v[k-t]$ such that $v[c]\equiv v(c)$. Hence, there exists a prefix $w$ of $v(c)$ such that $v\equiv ww'w$ (if $c\leq \frac{k}{2}, w\equiv v(c)$; if $c\geq \frac{k}{2},w\equiv v(2c-k)$) for some $w'\in A^{*}$, see Figure $2$. This is a contradiction.

If $h\geq3$, then $u\equiv v[k-t_{i_1}][s]v[k-t_{i_2}]\cdots v[k-t_{i_h}](s')$. Since $v[k-t_{i_{h-1}}]$ is completely contained in $u$, we have $t_{i_{h-1}}=k-(k-t_{i_{h-1}})>s'$. Since $u[t_{i_{h-1}}]\equiv v(t_{i_{h-1}})$, there exists a suffix $v[c]$ of $v[t_{i_j}]$ for some $t_{i_j}\in \{t_{i_1},t_{i_2},\dots,t_{i_{h-1}}\}$ such that $v[c]\equiv v(c)$, see Figure $3$. That is $v\equiv ww'w$ (if $c\leq \frac{k}{2}, w\equiv v(c)$; if $c\geq \frac{k}{2},w\equiv v(2c-k)$) for some $w'\in A^{*}$, a contradiction.

Hence $\beta\in L$.

If $\gamma\not\in L$, then $u\equiv v(k-s_{i_1})[t]v(k-s_{i_2})\cdots v(k-s_{i_{h'}})(t')$, where $s_{i_j}\in \{s_1,s_2,\dots,s_{_{l'}}\}$, $h'\geq2$, $1\leq t\leq k-s_{i_1}$ and $1\leq t'\leq k-s_{i_{h'}}$.

If $h'=2$, then $\gamma\equiv v(k-s)[t]v(k-s')(t')$ and $t+t'=k>k-s$. Hence $t'>k-s-t$.

Case $1.$ If $k-s\geq t'$, then $k-s\geq t'>k-s-t$. Hence there exists a suffix $w$ of $v(t')$ such that $u\equiv ww'w$ for some $w'\in A^{*}$, see Figure $4$.

Case $2.$ If $k-s< t'$, then $k-t'=t< s$. Since $u[s]\equiv v(s)$,  there exists a prefix $v(c)$ of $v[k-s']$ such that $v[c]\equiv v(c)$. Hence, there exists a prefix $w$ of $v(c)$ such that $v\equiv ww'w$ (if $c\leq \frac{k}{2}, w\equiv v(c)$; if $c\geq \frac{k}{2},w\equiv v(2c-k)$) for some $w'\in A^{*}$, see Figure $5$.

If $h'\geq3$, then $u\equiv v(k-s_{i_1})[t]v(k-s_{i_2})\cdots v(k-s_{i_{h'}})(t')$. Since $v(k-s_{i_{2}})$ is completely contained in $u$, we have $s_{i_{2}}=k-(k-s_{i_{2}})>t$. Since $u(s_{i_{2}})\equiv v[s_{i_{2}}]$, there exists a prefix $v(c)$ of $v(s_{i_j})$ for some $s_{i_j}\in \{s_{i_1},s_{i_2},\dots,s_{i_{h'-1}}\}$ such that $v(c)\equiv v[c]$, see Figure $6$. That is $v\equiv ww'w$ (If $c\leq \frac{k}{2}, w\equiv v(c)$; if $c\geq \frac{k}{2},w\equiv v(2c-k)$) for some $w'\in A^{*}$.

Both cases contradict condition (5). Hence $\gamma\in L$.

Therefore, $\beta,\gamma\in L$ if $S$ satisfies one of conditions $(2)-(5)$. This shows that $\widetilde{W}_R=W_R$ and $\widetilde{W}_L=W_L$. Hence $W_R$ and $W_L$ are regular. Thus, $S$ is biautomatic.

Since $L$ is closed under prefix words,  $S$ is prefix-automatic.
\end{proof}

\begin{theorem}\label{3.1.4}
Suppose that $S=sgp\langle a_1,a_2,\dots,a_n|a_{i_1}a_{i_2}\cdots a_{i_k}=\varepsilon\rangle,\ n\in\mathbb{N},\ k\geq2,\ a_{i_j}\in \{a_1, a_2, \dots, a_n\},\ j=1,2,\dots,k$ and $\{a_{i_1}a_{i_2}\cdots a_{i_k}=\varepsilon\}$ is a Gr\"{o}bner-Shirshov basis. Then $S$ is biautomatic.
\end{theorem}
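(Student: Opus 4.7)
The plan is to exhibit an explicit biautomatic structure for $S$. Write $u \equiv a_{i_1}a_{i_2}\cdots a_{i_k}$, let $L_1 = A^+ \setminus A^*\{u\}A^*$, and introduce a new symbol $e$ for the identity; set $B = A \cup \{e\}$ and $K = L_1 \cup \{e\}$. Since $\{u = \varepsilon\}$ is a Gr\"{o}bner-Shirshov basis, the map $\phi : B^+ \to S$ sending $a \mapsto a$ and $e \mapsto 1_S$ carries $K$ bijectively onto $S$, and $K$ is regular. Before the case analysis, I would extract the combinatorial consequence of the hypothesis: the only possible composition of $u=\varepsilon$ with itself is the self-intersection at an overlap of length $k-s$, which exists iff $u(s)\equiv u[s]$, and whose value $u(k-s) - u[k-s]$ must reduce trivially. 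Since both terms have length $<k$ and are therefore irreducible, triviality forces $u(k-s) \equiv u[k-s]$. Hence the set $S_u := \{s : 1 \leq s \leq k-1,\ u(s) \equiv u[s]\}$ is closed under the involution $s \mapsto k-s$; this symmetry is the key structural fact I will use.

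The languages $K^{\$}_=$, $^{\$}K_=$, $K^{\$}_e$, and $^{\$}_eK$ all equal $\Delta_K$ and are regular by Proposition \ref{22}. For $a \in A$ I would split $K^{\$}_a$ into four disjoint parts: (i) $\alpha = e$, $\beta = a$; (ii) $\alpha \in L_1$ and $\alpha a \in L_1$ (no reduction); (iii) $a = a_{i_k}$, $\alpha = \gamma \cdot a_{i_1}\cdots a_{i_{k-1}}$ with $\gamma \in L_1$, so $\beta = \gamma$; (iv) $a = a_{i_k}$, $\alpha = a_{i_1}\cdots a_{i_{k-1}}$, $\beta = e$. The crucial step is (iii), where I must describe exactly when $\gamma \cdot a_{i_1}\cdots a_{i_{k-1}}$ lies in $L_1$. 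A straddling occurrence of $u$ across the junction, with $t$ letters inside $\gamma$, requires both that $\gamma$ end in $u(t)$ and that $u(k-t) \equiv u[k-t]$; by the $S_u$-symmetry the latter is equivalent to $t \in S_u$. Hence the admissible $\gamma$ form the regular set $L_1 \setminus \bigcup_{t \in S_u} A^*\{u(t)\}$, and each of (i)--(iv) is a regular sublanguage of $K^{\$}_a$ expressed as a product of $\Delta$-languages with fixed finite factors. A mirror-image argument yields regularity of $^{\$}_aK$, this time excluding $\gamma$ that start with $u[t]$ for $t \in S_u$.

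Finally, in every pair $(\alpha, \beta)$ appearing above one has $||\alpha| - |\beta|| \leq k - 1$, so Proposition \ref{sam} transfers regularity between $\delta_B^R$ and $\delta_B^L$, giving $^{\$}K_b$ and $_bK^{\$}$ regular for every $b \in B \cup \{\varepsilon\}$. Hence $(B, K)$ is a biautomatic structure for $S$. The main obstacle is step (iii): translating the Gr\"{o}bner-Shirshov hypothesis into the exact regular description of those $\gamma \in L_1$ for which appending $a_{i_1}\cdots a_{i_{k-1}}$ keeps us in $L_1$. Once the $S_u$-symmetry is established, the remaining regularity arguments reduce to routine applications of Propositions \ref{22}, \ref{qua}, and \ref{sam}.
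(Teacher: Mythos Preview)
Your proof is correct and follows the same overall scheme as the paper: take $K = L_1 \cup \{e\}$ over $B = A \cup \{e\}$, split $K^\$_a$ and $^\$_aK$ into the no-reduction and reduction cases, and use Proposition~\ref{sam} with the bound $||\alpha|-|\beta||\leq k-1$ to pass between left and right padding.

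The one point where you diverge is the treatment of the reduction piece (iii). You invest effort in extracting the $S_u$-symmetry $t \in S_u \Leftrightarrow k-t \in S_u$ from the Gr\"obner--Shirshov hypothesis, in order to give an explicit regular description of the set of $\gamma \in L_1$ with $\gamma\,a_{i_1}\cdots a_{i_{k-1}} \in L_1$. The paper bypasses this entirely: it simply writes that piece as
\[
\bigl(\Delta_L \cdot \{(a_{i_1}\cdots a_{i_{k-1}},\varepsilon)\delta_A^R\}\bigr) \cap (L\times L)\delta_A^R,
\]
and lets the intersection with the regular set $(L\times L)\delta_A^R$ do the filtering. Thus in the paper the Gr\"obner--Shirshov hypothesis is invoked only once, to ensure $L$ is a normal form. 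Your $S_u$-symmetry is a genuine consequence of the hypothesis and your characterization is correct, but note that even without the symmetry the relevant index set is finite and the language regular; the symmetry is cosmetic here, and with the paper's intersection trick the whole analysis becomes unnecessary.
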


\begin{proof}
Since $S=sgp\langle a_1,a_2,\dots,a_n|a_{i_1}a_{i_2}\cdots a_{i_k}=\varepsilon\rangle\cong\widetilde{S}=sgp^{+}\langle e, a_1,a_2,\dots,a_n|a_{i_1}a_{i_2}$
$\cdots a_{i_k}=e,ee=e,a_je=a_j=ea_j,j=1,2,\dots,n\rangle$, we just need to prove that $\widetilde{S}$ is biautomatic.

Let $A=\{e,a_1,a_2,\dots,a_n\}$, $L=\{e\}\cup(A^{+}-A^{*}\{a_{i_1}a_{i_2}\cdots a_{i_k},e\}A^{*})$. It is easy to see that $\{a_{i_1}a_{i_2}\cdots a_{i_k}
=e,ee=e,a_je=a_j=ea_j,j=1,2,\dots,n\}$ is also a Gr\"{o}bner-Shirshov basis in $\widetilde{S}$. Thus, $L$ is a normal form of $\widetilde{S}$. Obviously, $L$ is regular and $L$ maps onto $\widetilde{S}$. We  prove that $(A,L)$ is a biautomatic structure for $\widetilde{S}$.

By Proposition \ref{22}, $L_{=}^{\$}=\Delta_{L}=^{\$}L_{=}$ is regular. Note that
\begin{eqnarray*}
L_{a_{i}}^{\$}&=&\{(e,a_i)\}\cup\{(\alpha,\alpha a_{i})\delta_A^{R}\ |\ \alpha\in L-\{e\}\}=\{(e,a_i)\}\cup(\Delta_{L-\{e\}}\cdot \{(\$,a_{i})\}),\\
_{a_{j}}^{\$}L&=&\{(e,a_j)\}\cup\{(\alpha,a_{j}\alpha)\delta_A^{L}\ |\ \alpha\in L-\{e\}\}=\{(e,a_j)\}\cup(\{(\$,a_{j})\}\cdot\Delta_{L-\{e\}})
\end{eqnarray*}
are regular for any $a_{i}\neq a_{i_k}$ and $a_{j}\neq a_{i_1}$. Also,
\begin{eqnarray*}
L_{a_{i_k}}^{\$}&=&\{(e,a_{i_k})\}\cup\{(\alpha,\alpha a_{i_k})\delta_A^{R}\ |\ \alpha\in L-(\{e\}\cup A^{*}\{a_{i_1}a_{i_2}\cdots a_{i_{k-1}}\})\}\\
&& \cup\{(\alpha a_{i_1}a_{i_2}\cdots a_{i_{k-1}},\alpha)\delta_A^{R}\ |\ \alpha a_{i_1}a_{i_2}\cdots a_{i_{k-1}}\in L\}\\
&=&\{(e,a_{i_k})\}\cup(\Delta_{L-(\{e\}\cup A^{*}\{a_{i_1}a_{i_2}\cdots a_{i_{k-1}}\})}\cdot \{(\$,a_{i_k})\})\\
&& \cup((\Delta_L\cdot\{(a_{i_1}a_{i_2}\cdots a_{i_{k-1}},\varepsilon)\delta_A^{R}\})\cap (L\times L)\delta_A^{R}),\\
 _{a_{i_1}}^{\$}L&=&\{(e,a_{i_1})\}\cup\{(\alpha,a_{i_1}\alpha)\delta_A^{L}\ |\ \alpha\in L-(\{e\}\cup\{a_{i_2}a_{i_3}\cdots a_{i_k}\}A^{*})\}\\
 && \cup\{(a_{i_2}a_{i_3}\cdots a_{i_k}\alpha,\alpha)\delta_A^{L}\ |\ a_{i_2}a_{i_3}\cdots a_{i_k}\alpha\in L\}\\
 &=&\{(e,a_{i_1})\}\cup(\{(\$,a_{i_1})\}\cdot\Delta_{L-\{e\}\cup\{a_{i_2}a_{i_3}\cdots a_{i_k}\}A^{*}\}})\\
 &&\cup((\{(a_{i_2}a_{i_3}\cdots a_{i_{k}},\varepsilon)\delta_A^{L}\}\cdot\Delta_L)\cap (L\times L)\delta_A^{L})
\end{eqnarray*}
are regular.

For any $(\alpha,\beta)\delta_A^{R}\in L_{a_{j}}^{\$}$ $(or\ (\alpha,\beta)\delta_A^{L} \in$ $^{\$}_{a_{j}}L)$, since $||\alpha|-|\beta||\leq k$, by Proposition \ref{sam}, we have $^{\$}L_{a_{j}}\ (or\ _{a_{j}}L^{\$})$ is regular, $j=1,2,\dots,n$.

Hence $(A,L)$ is a biautomatic structure for $\widetilde{S}$.
\end{proof}

\begin{theorem}\label{3.1.5}
Suppose that $S=sgp^{+}\langle a_1,a_2,\dots,a_n|a_{i_1}a_{i_2}\cdots a_{i_k}=x\rangle,\ n\in \mathbb{N},\ k\geq2,\ x, a_{i_j}\in \{a_1, a_2, \dots, a_n\}$, $j=1,2,\dots,k$ and $\{a_{i1}a_{i2}\cdots a_{ik}=x\}$ is a Gr\"{o}bner-Shirshov basis. Then $S$ is prefix-automatic.
\end{theorem}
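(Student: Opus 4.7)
The plan is to reduce the problem to Theorem \ref{3.1.1}(i) in all but one boundary case, and then handle that case by an explicit automaton construction. Since $|v|=1$, we have $v(t)\equiv x$ for every $t\geq 1$, while $u[t]$ has length $\min(t,k)\geq 1$. Thus $v(t)\not\equiv u[t]$ automatically holds for $t\geq 2$, and fails at $t=1$ exactly when $x\equiv a_{i_k}$. Therefore Theorem \ref{3.1.1}(i) settles prefix-automaticity whenever $x\not\equiv a_{i_k}$, leaving only the single defining relation $u' a_{i_k}=a_{i_k}$ with $u'\equiv a_{i_1}a_{i_2}\cdots a_{i_{k-1}}$.

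For the remaining case I would take $L=A^{+}-A^{*}\{u\}A^{*}$. By the Gr\"obner--Shirshov hypothesis and Lemma \ref{l1} this is a regular normal form of $S$ and is closed under prefixes, so by the concluding remark of Section 2.3 it suffices to verify that $(A,L)$ is an automatic structure for $S$. The equalities $L_{=}^{\$}=\Delta_{L}$ and $L_{a_{j}}^{\$}=\Delta_{L}\cdot\{(\$,a_{j})\}$ for each $a_{j}\not\equiv a_{i_k}$ are immediate, since $u$ ends in $a_{i_k}$ and so right-multiplying by any other letter cannot create a fresh occurrence of $u$. The delicate case is $L_{a_{i_k}}^{\$}$: every $\alpha\in L$ admits a unique decomposition $\alpha\equiv \gamma (u')^{p}$ with $p\geq 0$ maximal, where $\gamma$ is either empty or does not end in $u'$ (and $\gamma\equiv\varepsilon$ forces $p\geq 1$). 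A straightforward induction on $p$ using $u'a_{i_k}=a_{i_k}$ gives $\alpha\cdot a_{i_k}=\gamma a_{i_k}$ in $S$, and $\gamma a_{i_k}$ lies in $L$ because $\gamma$ has no trailing $u'$.

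Writing $L_{0}=L\setminus A^{*}\{u'\}$, one then has
\begin{align*}
L_{a_{i_k}}^{\$}=\Delta_{L_{0}}\cdot\{(\$,a_{i_k})\}\,\cup\,\Delta_{L_{0}\cup\{\varepsilon\}}\cdot R,
\end{align*}
where $R$ is the regular language
\begin{align*}
R=\{(a_{i_1},a_{i_k})(a_{i_2},\$)\cdots(a_{i_{k-1}},\$)\}\cdot\{(a_{i_1},\$)(a_{i_2},\$)\cdots(a_{i_{k-1}},\$)\}^{*}
\end{align*}
over $A(2,\$)$; since $L_{0}$ is regular and all $\$$-padding occurs on the second track only after the shorter word has ended, these concatenations are legitimate regular expressions over $A(2,\$)$. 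The main obstacle is that right-multiplying a normal-form word by $a_{i_k}$ can trigger arbitrarily many iterated Gr\"obner--Shirshov reductions, so $\bigl||\alpha|-|\alpha a_{i_k}|\bigr|$ is unbounded, which rules out any direct appeal to Propositions \ref{qua} and \ref{sam}; the automaton above circumvents this by encoding the entire collapsed tail as a simple loop on the top track, which is precisely what makes the construction work despite the unbounded length difference.
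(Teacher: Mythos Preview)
Your reduction to the case $x\equiv a_{i_k}$ via Theorem~\ref{3.1.1}(i), your choice of $L$, and your treatment of $L_{=}^{\$}$ and $L_{a_j}^{\$}$ for $a_j\not\equiv a_{i_k}$ all match the paper's argument exactly. The decomposition $\alpha\equiv\gamma(u')^{p}$ with $p$ maximal, and the reduction $\alpha a_{i_k}=\gamma a_{i_k}$, are also correct and coincide with the paper's description of $L_{a_{i_k}}^{\$}$.

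There is, however, a genuine gap in your displayed equality for $L_{a_{i_k}}^{\$}$. The language $\Delta_{L_{0}\cup\{\varepsilon\}}\cdot R$ contains every pair $(\gamma(u')^{j},\gamma a_{i_k})\delta_{A}^{R}$ with $\gamma\in L_{0}\cup\{\varepsilon\}$ and $j\geq 1$, but nothing in your construction forces $\gamma(u')^{j}\in L$. This can fail: take $u=aba$, $x=a$ (so $u'=ab$, $a_{i_k}=a$, and $\{aba=a\}$ is a Gr\"obner--Shirshov basis). Then for $\gamma=b\in L_{0}$ and $j=2$ one gets $\gamma(u')^{2}=babab$, which contains $aba$ and hence lies outside $L$; thus $(babab,ba)\delta_{A}^{R}$ is in your right-hand side but not in $L_{a}^{\$}$. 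The fix is immediate: intersect with $(L\times L)\delta_{A}^{R}$, which is regular, and this is precisely what the paper does.

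A minor remark: your claim that the unbounded length difference ``rules out any direct appeal to Proposition~\ref{qua}'' is not quite right. Proposition~\ref{qua} requires a bound only on the \emph{first} factor $M$, and the paper exploits this by taking $M=\Delta_{L_{0}}\cdot\{(\$,a_{i_k})\}$ (difference $1$) and $N=\{(u',\varepsilon)\delta_{A}^{R}\}^{+}$ (unbounded), then intersecting $M\odot N$ with $(L\times L)\delta_{A}^{R}$. Your direct concatenation over $A(2,\$)$ is a perfectly good alternative and arguably more transparent; it simply needs the same intersection to be correct. On the other hand, your inclusion of $\varepsilon$ in the diagonal to capture words of the form $(u')^{p}$ is a detail the paper's displayed formula overlooks.
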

\begin{proof}
Case 1. If $x\not\equiv a_{ik}$, then by Theorem \ref{3.1.1}, S is prefix-automatic.

Case 2. Let $x\equiv a_{ik}$ and $A=\{a_1,a_2,\dots,a_n\}$. Then $L=A^+-A^*\{a_{i_1}a_{i_2}\cdots a_{i_k}\}A^*$ is a normal form of $S$ and $L_=^\$=\Delta_L$ is regular by Proposition \ref{22}. By noting that
$$L_{a_j}^\$=\{(\alpha,\alpha a_j)\delta_A^R|\alpha\in L\}=\Delta_L\cdot\{(\$,a_j)\}$$
is also regular for any $a_j\neq a_{i_k}$ and
\begin{eqnarray*}
L_{a_{i_k}}^\$ &=&\{(\alpha(a_{i_1}a_{i_2}\cdots a_{i_{k-1}})^i,\alpha a_{i_k})\delta_A^R|\alpha\in L-A^*\{a_{i_1}a_{i_2}\cdots a_{i_{k-1}}\},\\
&&\ \ \alpha(a_{i_1}a_{i_2}\cdots a_{i_{k-1}})^i\in L,i\geq 1\}\\
&&\cup\{(\alpha,\alpha a_{i_k})\delta_A^R|\alpha\in L-A^*\{a_{i_1}a_{i_2}\cdots a_{i_{k-1}}\}\}\\
&=&(((\Delta_{L-A^*\{a_{i_1}a_{i_2}\cdots a_{i_{k-1}}\}}\{\$,a_{i_k}\})\odot\{(a_{i_1}a_{i_2}\cdots a_{i_{k-1}},\ \varepsilon)\delta_A^R\}^+)\cap(L\times L)\delta_A^R)\\
&& \cup\Delta_{L-A^*\{a_{i_1}a_{i_2}\cdots a_{i_{k-1}}\}}\{(\$,a_{i_k})\}
\end{eqnarray*}
is regular by Propositions \ref{22} and  \ref{qua}, $S$  is automatic. Since $L$ is closed under prefix words, $S$ is prefix-automatic.
\end{proof}

Noting that the semigroup $S$ in Theorem \ref{3.1.5} may not be biautomatic, the following theorem is an example.

\begin{theorem}\label{3.1.8}
Let $S=sgp^+\langle a,b|a^kb=b\rangle$, where $k\geq1$. Then $S$ is prefix-automatic but not biautomatic.
\end{theorem}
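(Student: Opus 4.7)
The plan splits into two halves, corresponding to the two assertions of the theorem. For prefix-automaticity, I will simply invoke Theorem \ref{3.1.5}. The single relation $a^kb=b$ is a Gr\"obner-Shirshov basis because its leading word $a^kb$ begins with $a$ and ends with $b$, so neither a nontrivial intersection nor an inclusion composition is possible. Viewing the left-hand side as $a_{i_1}\cdots a_{i_{k+1}}$ with $a_{i_1}=\cdots=a_{i_k}=a$ and $a_{i_{k+1}}=b$, the right-hand side $x=b$ equals $a_{i_{k+1}}$; thus Case 2 of Theorem \ref{3.1.5} applies and $S$ is prefix-automatic.

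For non-biautomaticity I argue by contradiction: suppose $(A, K)$ is a biautomatic structure for $S$ with $A=\{a,b\}$, taken with uniqueness via Proposition \ref{uni}. The crucial invariant is that both sides of $a^kb=b$ contain exactly one $b$, so the $b$-count $|\cdot|_b$ descends from $A^+$ to $S$: every representative $\hat{x}\in K$ of $x\in S$ satisfies $|\hat{x}|_b=|x|_b$. In particular $\widehat{a^n}\in\{a\}^+$ for all $n\geq1$, so $\widehat{a^n}=a^{m_n}$, and $m_n\to\infty$ by uniqueness. Combining the regularity of $K_a^\$$ and $\leftidx{_a}{K}^\$$ (both provided by biautomaticity) applied to the pairs $(a^{m_n},a^{m_{n+1}})$, and using that regular subsets of $(a^*)^2$ under either padding convention are semi-linear, one concludes that $m_n$ is eventually linear in $n$.

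The contradiction is then drawn from the infinite family $(\widehat{b^ia^{kn}},\widehat{b^{i+1}})\in\leftidx{^\$}{K}_b$, which are valid pairs because $b^ia^{kn}\cdot b=b^i(a^{kn}b)=b^{i+1}$ in $S$ and whose left-padding $|\widehat{b^ia^{kn}}|-|\widehat{b^{i+1}}|$ is unbounded in $n$. Intersecting $\leftidx{^\$}{K}_b$ with the regular language of encodings that use only tokens from $\{(a,b),(b,b),(a,\$),(b,\$)\}$, the $b$-count identity applied to both coordinates forces $\#(a,b)-\#(b,\$)=1$, a constraint that fails the pumping lemma whenever both counts are unbounded, yielding the desired contradiction.

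The main obstacle is precisely this last step: in an arbitrary $K$ the representatives $\widehat{b^{i+1}}$ and $\widehat{b^ia^{kn}}$ need not be the normal forms $b^{i+1}$ and $b^ia^{kn}$, and their internal structure controls which tokens occur in the $\delta_A^L$-encoding. One must first exploit the $b$-invariant, together with the semi-linearity forced by biautomaticity on the sets $\{\widehat{b^i}\}_i$ and $\{\widehat{b^ia^{kn}}\}_{i,n}$, to pin down the positions of the $b$-letters within each representative enough that infinitely many pairs survive the token restriction with unbounded $(b,\$)$-count; the pumping argument then proceeds essentially as in the normal-form calculation.
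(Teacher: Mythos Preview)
Your prefix-automaticity argument is correct and identical to the paper's.

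For non-biautomaticity, the paper's route is considerably more direct than yours, and your proposal has a real gap in its final step.  First, a technical point: you assume a biautomatic structure over $A=\{a,b\}$, but Proposition~\ref{cha} (change of generators) is stated only for \emph{monoids}.  The paper handles this by passing to $S^{1}$ via Proposition~\ref{111} and working over $B=\{e,a,b\}$.  Over $A=\{a,b\}$ your detour through $m_n$ is in fact vacuous: since $a^{m}=a^{m'}$ in $S$ forces $m=m'$, the $b$-invariant already gives $\widehat{a^{n}}\equiv a^{n}$ exactly, so nothing is gained there.

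The substantive gap is the token-restriction step.  Your restriction to letters in $\{(a,b),(b,b),(a,\$),(b,\$)\}$ requires $\widehat{b^{i+1}}\in b^{+}$, but in $S$ the words representing $b^{i+1}$ are exactly the $a^{km_{0}}ba^{km_{1}}b\cdots a^{km_{i}}b$, so $\widehat{b^{i+1}}$ may contain many $a$'s and your intersection can be empty.  The ``semi-linearity'' patch you sketch does not explain how to force $\widehat{b^{i+1}}\in b^{+}$, nor how to guarantee unbounded $(b,\$)$-count among surviving pairs.  Even granting all that, the pumping step as written is not a contradiction: since the encoding lies in $\{(a,\$),(b,\$)\}^{*}\{(a,b),(b,b)\}^{*}$, a pump confined to $(a,\$)$-letters preserves the identity $\#(a,b)-\#(b,\$)=1$.

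The paper avoids all of this.  Working in $S^{1}$, it takes $\alpha=\widehat{b^{j}a^{ki}}$ and $\beta=\widehat{b^{j+1}}$ and observes that $\alpha$ must end with a block $\alpha_{1}$ of length at least $ki$ containing no $b$ (only $a$'s and $e$'s).  Choosing $ki$ larger than $|\beta|$ forces the portion of $(\alpha,\beta)\delta_{B}^{L}$ aligned with $\beta$ to have first coordinate inside $\alpha_{1}$; choosing $j$ larger than the state count of $M(\leftidx{^{\$}}{K}_{b})$ then produces a pump $(u_{1},u_{2})$ with $u_{1}\in\{a,e\}^{*}$ automatically.  If $b\in con(u_{2})$, pumping destroys the $b$-count identity; if not, $u_{2}\in\{a,e\}^{*}$, and iterating the pump $k{+}1$ times yields a second $K$-representative of $\beta$, contradicting uniqueness.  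The point is that one controls only the \emph{first} coordinate (via the long $\{a,e\}$-tail of $\alpha$) and lets the case split on $u_{2}$ do the rest---no control of $\widehat{b^{j+1}}$ is needed.
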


\begin{proof} Clearly, $\{a^kb=b\}$ is a Gr\"obner-Shirshov basis.
By Theorem \ref{3.1.5}, $S$ is prefix-automatic. We now show that there does not exist  biautomatic structure for $S$.

Suppose $S$ is biautomatic. Then $S^1$ is biautomatic by Proposition \ref{111}. For $B=\{e,a,b\}$, there exists
$K\subseteq B^+$ such that $(B,K)$ is a biautomatic structure for $S^1$ with uniqueness by Propositions \ref{cha} and  \ref{uni}. So $\leftidx{^\$}K_b$ is regular. For $i,j\in\mathbb{N}$, let $\alpha,\beta\in K$ with $\alpha=b^j(a^k)^i,\beta=b^{j+1}$. Then we have $(\alpha,\beta)\in \leftidx{^{\$}}K_b $ and
\begin{eqnarray*}
\alpha&\equiv&\gamma_1b\gamma_2b\cdots\gamma_jb\gamma_{j+1}(a\tau_{11}a\tau_{12}\cdots a\tau_{1k})\cdots(a\tau_{i1}a\tau_{i2}\cdots a\tau_{ik}),\\
\beta&\equiv&\eta_1b\eta_2b\cdots\eta_{j+1}b\eta_{j+2},
\end{eqnarray*}
where $\gamma_{j+1},\tau_{11},\dots,\tau_{1k},\dots,\tau_{i1},\dots,\tau_{ik},\eta_{j+2}\in\{e\}^*,
\gamma_1,\dots,\gamma_j,\eta_1,\dots,\eta_{j+1}\in\{e,a\}^*$. Denote
$\alpha_1\equiv\gamma_{j+1}(a\tau_{11}a\tau_{12}\cdots a\tau_{1k})\cdots(a\tau_{i1}a\tau_{i2}\cdots a\tau_{ik})$. Then
 $ik\leq|\alpha_1|\leq N(ik+1)+ik$ and
$j+1\leq|\beta|<N(j+2)+j+1$,
where $N=S(M(K))$.

Let $ik>N(j+2)+j+1$ and $j>|S(M(^\$K_b))|$. Then there will be a loop $(u_1,u_2)\delta_B^L$ in $(\alpha_1,\beta)\delta_B^L$. Assume
$\alpha\equiv w_1u_1w_2,\ \beta\equiv w_1'u_2w_2'$. Since $u_1$ is a subword of $\alpha_1$, $b\not\in con(u_1)$. If $b\in con(u_2)$, we have $occ(b,w_1u_1^iw_2b)\neq occ(b,w_1'u_2^iw_2')$, so $(w_1u_1^iw_2,w_1'u_2^iw_2')\delta_B^R\not\in\newcommand{\leftexp}[2]{{\vphantom{#2}}^{#1}{#2}}\leftexp{\$}{K}_b$ for
$i>1$, a contradiction. Hence $b\not\in con(u_2)$ and so $con(u_2)\subseteq\{e,a\}$. Therefore $\beta\equiv w_1'u_2w_2'=w_1'u_2^{k+1}w_2'\in K$  which contradicts the uniqueness of $K$. Thus $S$ is not biautomatic.
\end{proof}

\begin{theorem}\label{3.1.6}
Let $S=sgp^+\langle a_1,a_2,\dots,a_n|a_{i_1}a_{i_2}\cdots a_{i_k}=xy\rangle$, $n\in\mathbb{N},\ k\geq2$, where $x, y, a_{i_1}, \dots, a_{i_k}\in \{a_1, a_2, \dots, a_n\}$ and $\{a_{i_1}a_{i_2}\cdots a_{i_k}=xy\}$ is a Gr\"{o}bner-Shirshov basis. If $a_{i_{k-1}}a_{i_k}\not\equiv xy$ and $a_{i_1}a_{i_2}\cdots a_{i_k}\not\equiv y^{k-1}x$, then $S$ is automatic and $S^1$ is prefix-automatic.
\end{theorem}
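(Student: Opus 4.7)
My plan is to adapt the strategy of Theorems \ref{3.1.5} and \ref{3.1.3} by working with the Gr\"obner--Shirshov normal form $L := A^+ - A^*\{u\}A^*$, where I write $u = a_{i_1}\cdots a_{i_k}$ and $v = xy$. By the GSB hypothesis, $L$ is a regular set of unique normal forms for $S$, and $L$ is closed under prefix words, so once $(A,L)$ is shown to be an automatic structure for $S$ it is automatically a prefix-automatic structure for $S$, and Lemma \ref{preaut} upgrades this to a prefix-automatic structure for $S^1$. Proposition \ref{22} gives that $L_=^\$ = \Delta_L$ is regular, and since the last letter of $u$ is $a_{i_k}$, appending any $a_j \neq a_{i_k}$ to a word in $L$ cannot produce $u$, so $L_{a_j}^\$ = \Delta_L \cdot \{(\$,a_j)\}$ is regular.

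The delicate case is $L_{a_{i_k}}^\$$. For $\alpha \in L$ not ending in $u(k-1) = a_{i_1}\cdots a_{i_{k-1}}$, the word $\alpha a_{i_k}$ already lies in $L$, contributing the regular summand $\Delta_{L-A^*\{u(k-1)\}}\cdot\{(\$,a_{i_k})\}$. Otherwise write $\alpha = \alpha' u(k-1)$; one rewrite yields $\alpha' xy$, and I next check when further rewriting is triggered. The hypothesis $a_{i_{k-1}}a_{i_k} \not\equiv xy$ rules out an occurrence of $u$ ending at position $|\alpha'|+2$, while any occurrence ending at $|\alpha'|+1$ would force $a_{i_k} = x$ together with $\alpha'$ itself ending in $u(k-1)$. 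Hence if $a_{i_k} \neq x$ no cascade is triggered and the pair set $(\Delta_L \cdot \{(u(k-1),xy)\delta_A^R\}) \cap (L \times L)\delta_A^R$ is regular.

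The heart of the argument is the subcase $a_{i_k} = x$. Writing $\alpha = \alpha_0 (u(k-1))^m$ with $\alpha_0$ not ending in $u(k-1)$ and $m \geq 1$ maximal, each application of the identity $u(k-1)\cdot x = u$ lets the rewrite cascade $m$ times to produce $\alpha_0 xy^m$. I verify $\alpha_0 xy^m \in L$ in two subcases: when $x \neq y$ the only occurrence of the letter $x$ in the tail is at position $|\alpha_0|+1$, so any new copy of $u$ would force $\alpha_0$ to end in $u(k-1)$ and contradict maximality; when $x = y$ the tail becomes $x^{m+1}$, the hypothesis $u \not\equiv y^{k-1}x$ rules out $u = x^k$, and the remaining potential occurrences of $u$ inside $\alpha_0 x^{m+1}$ are excluded using the overlap constraints already forced by $\alpha_0 u(k-1) \in L$.

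Finally, to package this cascade as a regular relation I apply Proposition \ref{qua}: with $M := \Delta_{(L-A^*\{u(k-1)\})\cup\{\varepsilon\}}\cdot\{(u(k-1),xy)\delta_A^R\}$ of bounded length difference, and $N := \{(u(k-1),y)\delta_A^R\}^*$ regular as the Kleene star of a single word, the language $M \odot N$ is regular, and its intersection with $(L\times L)\delta_A^R$ extracts precisely the cascaded pairs $(\alpha_0 (u(k-1))^m, \alpha_0 xy^m)\delta_A^R$. Taking the union with the non-cascade contributions completes $L_{a_{i_k}}^\$$ and hence the automatic structure for $S$; the prefix-automatic structure for $S^1$ then follows from the prefix-closure of $L$ together with Lemma \ref{preaut}. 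The main obstacle I anticipate is the cascade verification in the $x = y$ subcase --- making both the direct exclusion $u \not\equiv y^{k-1}x$ and the indirect overlap constraint from $\alpha_0 u(k-1) \in L$ watertight --- combined with carefully aligning the $\delta_A^R$-padding so that the $\odot$-decomposition faithfully reproduces the intended cascaded pairs.
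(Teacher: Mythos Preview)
Your combinatorial analysis of the rewriting cascade is correct: when $a_{i_k}=x$, multiplying $\alpha_0\,(u(k{-}1))^m$ by $a_{i_k}$ does reduce to $\alpha_0 x y^m$, and the two hypotheses do ensure this last word lies in $L$. The gap is in the regularity argument. Your language $N=\{(u(k{-}1),y)\delta_A^R\}^{*}$ is the Kleene star of a single word over $A(2,\$)$, but for $k\ge 3$ and $j\ge 2$ the $j$-fold concatenation $[(u(k{-}1),y)\delta_A^R]^{j}$ is \emph{not} in the image of $\delta_A^R$: the $\$$-symbols in the second coordinate are interspersed rather than gathered at the right end. Since the definition of $M\odot N$ in Proposition~\ref{qua} only picks up those elements of $N$ that are of the form $(w_1',w_2')\delta_A^R$, your $M\odot N$ captures only the cases $m\in\{1,2\}$, not the full cascade.

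This is not a cosmetic problem that a different choice of $N$ repairs: for $k\ge 3$ the pair $(A,L)$ is in general \emph{not} an automatic structure. For instance, with $u=abc$, $x=c$, $y=d$ (all distinct), intersecting $L_c^{\$}$ with the regular language $((ab)^{+}\times A^{+})\delta_A^R$ yields exactly $\{((ab)^m, cd^m)\delta_A^R : m\ge 1\}$, which is not regular by a routine pumping argument; hence $L_c^{\$}$ itself is not regular. This is precisely why the paper does \emph{not} work with $L$ in the subcases where $a_{i_k}=x$ and $u$ fails to be a subword of $(u(k{-}1))^2$ (so arbitrarily long cascades occur). Instead it passes to $S^1$, uses a generalized sequential machine to produce a padded language $K\subseteq (A\cup\{e\})^{+}$ in which each cascaded $y$ is replaced by $ye^{k-2}$, and then shows $(A\cup\{e\},K)$ is automatic; the padding makes the cascade length-balanced so that ordinary concatenation of $A(2,\$)$-words suffices. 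Your plan would need an analogous device to go through.
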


\begin{proof}
If $x\not\equiv a_{i_k}$,   then $S$ is prefix-automatic by Theorem \ref{3.1.1}. We now suppose $x\equiv a_{i_k}$.

Let $A=\{a_1,a_2,\dots,a_n\}$ and
$L=A^+-A^*\{a_{i_1}a_{i_2}\cdots a_{i_{k-1}}x\}A^*$. Then $L$ is a normal form of $S$ and
$L$ is clearly regular.

Case 1. If $x\equiv y$, then $S=sgp^+\langle a_1,a_2,\dots,a_n|a_{i_1}a_{i_2}\cdots a_{i_{k-1}}x=x^2\rangle$  since $a_{i_{k-1}}x\not\equiv xx$ and $a_{i_{k-1}}\not\equiv x$. Clearly, $L_=^\$=\Delta_L$ and $L_{a_j}^{\$}=\Delta_L\cdot\{(\$,a_j)\}$ are regular if $a_j\not\equiv x$.

Case 1-1. If $a_{i_1}a_{i_2}\cdots a_{i_{k-1}}x$ is a subword of $a_{i_1}a_{i_2}\cdots a_{i_{k-1}}a_{i_1}a_{i_2}\cdots a_{i_{k-1}}$, then
\begin{eqnarray*}
L_x^\$&=&\{(\alpha,\alpha x)\delta_A^R|\alpha\in L-A^*\{a_{i_1}a_{i_2}\cdots a_{i_{k-1}}\}\}\\
&&\cup\{(\alpha a_{i_1}a_{i_2}\cdots a_{i_{k-1}},\alpha xx)\delta_A^R|\alpha a_{i_1}a_{i_2}\cdots a_{i_{k-1}}\in L,\alpha\in L-A^*\{a_{i_1}a_{i_2}\cdots a_{i_{k-1}}\}\}
\end{eqnarray*}
is regular by   Proposition \ref{22}. Hence  $S$ is prefix-automatic.

Case 1-2. If $ a_{i_1}a_{i_2}\cdots a_{i_{k-1}}x$ is not a subword of $a_{i_1}a_{i_2}\cdots a_{i_{k-1}}a_{i_1}a_{i_2}\cdots a_{i_{k-1}}$, then $a_{i_1}\neq x$. Let $B=\{e,a_1,a_2,\dots, a_n\}$  and $\mathcal{M}=(S,A,\mu,s_0,F)$ is a DFSA accepting $L$. Denote $m=max\{i|x^i$ is a subword of $a_{i_1}a_{i_2}\cdots a_{i_{k-1}}\}$. Then $m<k-1$.

Let $\mathcal{A}=(Q,A,B,\sigma,q_0,T)$ be a $gsm$, where $Q=S\times\{0,1,\dots,m\}$ is the set of states, $q_0=(s_0,0)$  the initial state, $T=F\times\{0,1,\dots,m\}$ the terminal states and $\sigma$ the partial function from $Q\times A$ to $\mathcal{P}(Q\times B^*)$ defined by the following equations
\begin{align*}
\sigma((s,i),a)&=\{((sa,0),a)\},\ i\in\{0,1,\dots,m\},\ a\neq x,\\
\sigma((s,i),x)&=\{((sx,i+1),x)\},\ i\in\{0,1,\dots,m-1\},\\
\sigma((s,m),x)&=\{((sx,m),xe^{k-2})\},
\end{align*}
where $sb:= \mu(s,b)$ for $s\in S, b\in A$.

Let $K=\eta_\mathcal{A}(L)\cup\{e\}$. Since $L$ is regular, we have $K$ is regular and maps onto $S^1$. Thus $K_=^{\$}=\Delta_K=K_e^\$ $ is regular. In addition,
$$
K_{a_j}^\$=\{(e,a_j)\}\cup\{(\alpha,\alpha a_j)\delta_B^R|\alpha\in K\}\ (a_j\neq x)
$$
and
\begin{eqnarray*}
K_x^\$&=&\{(e,x)\}\cup\{(\alpha,\alpha x)\delta_B^R|\alpha\in K-B^*\{a_{i_1}a_{i_2}\cdots a_{i_{k-1}},x^m,e\}\}\\
&&\cup\{(\alpha,\alpha xe^{k-2})\delta_B^R|\alpha\in (K\cap B^*\{x^m,e\})-\{e\}\}\\
&&\cup(\cup_{t=0}^{m}\{(\alpha x^t(a_{i_1}a_{i_2}\cdots a_{i_{k-1}})^i,\alpha x^{t+i+1})\delta_B^R|\alpha x^t(a_{i_1}a_{i_2}\cdots a_{i_{k-1}})^i\in K,\\
&&\ \ \ \ \ \ \ \ \ \  1\leq i\leq m-t-1,\alpha \in K-B^*\{x\}\})\\
&&\cup(\cup_{t=0}^{m}\{(\alpha x^t(a_{i_1}a_{i_2}\cdots a_{i_{k-1}})^i,\alpha x^m (xe^{k-2})^{t+i+1-m})\delta_B^R|\\
&&\ \ \ \ \ \ \ \ \ \ \alpha x^t(a_{i_1}a_{i_2}\cdots a_{i_{k-1}})^i\in K, i\geq m-t-1,\alpha \in K-B^*\{x\}\})\\
&&\cup\{(\alpha (a_{i_1}a_{i_2}\cdots a_{i_{k-1}})^i,\alpha (xe^{k-2})^{i+1})\delta^R_B|
\alpha (a_{i_1}a_{i_2}\cdots a_{i_{k-1}})^i\in K,\\
&&\ \ \ i\geq 1,\alpha \in K\cap B^*\{e,x^m\}\}
\end{eqnarray*}
are regular. So $(B,K)$ is an automatic structure for $S^1$. Hence $S$ is automatic by Proposition \ref{111}. Since
\begin{eqnarray*}
K'_=&=&\{(e,e)\}\cup\{(\alpha, \alpha)\delta_B^R| \alpha\in K-B^*\{e\}^+\}\\
&&\cup\{(\alpha xe^{k-2}, \alpha xe^j)\delta_B^R|\alpha xe^{k-2}\in K,\ 0\leq j\leq k-2\}\\
&=&\{(e,e)\}\cup\{(\alpha, \alpha)\delta_B^R| \alpha\in K-B^*\{e\}^+\}\\
&&\cup(\Delta_{K-B^*\{xe^{k-2}\}}\cdot\{(xe^{k-2},xe^j)\delta_B^R|0\leq j\leq k-2\})\cap(K\times Pref(K))\delta_B^R
\end{eqnarray*}
is regular, $S^1$ is prefix-automatic.

Case 2. If $x\not\equiv y$, then $S=sgp^+\langle a_1,a_2,\dots,a_n|a_{i_1}a_{i_2}\cdots a_{i_{k-1}}x=xy\rangle$.

Case 2-1. Suppose $a_{i_1}\not\equiv y$. Let $\mathcal{M}=(S,A,\mu,s_0,F)$ be a DFSA accepting $L$.

Case 2-1-1. If $a_{i_1}a_{i_2}\cdots a_{i_{k-1}}x$ is a subword of $a_{i_1}a_{i_2}\cdots a_{i_{k-1}}a_{i_1}a_{i_2}\cdots a_{i_{k-1}}$, then $(A,L)$ is an automatic structure for $S$. It follows that
\begin{eqnarray*}
L_=^{\$}&=&\Delta_L,\ \ \ L_{a_j}^\$=\{(\alpha,\alpha a_j)\delta_A^R|\alpha\in L\}\ (a_j\neq x),\\
L_x^\$&=&\{(\alpha,\alpha x)\delta_A^R|\alpha\in L-A^*\{a_{i_1}a_{i_2}\cdots a_{i_{k-1}}\}\}\\
&& \cup\{(\alpha a_{i_1}a_{i_2}\cdots a_{i_{k-1}},\alpha xy)\delta_A^R|\alpha (a_{i_1}a_{i_2}\cdots a_{i_{k-1}})\in L\}
\end{eqnarray*}
are regular.  Since $L$ is closed under prefix words, $S$ is prefix-automatic.

Case 2-1-2. If $a_{i_1}a_{i_2}\cdots a_{i_{k-1}}x$ is not a subword of $a_{i_1}a_{i_2}\cdots a_{i_{k-1}}a_{i_1}a_{i_2}\cdots a_{i_{k-1}}$, let $\mathcal{A}=(Q,A,B,\sigma,$ $q_0,T)$ be a $gsm$ where
$Q=S\times\{0,1,\dots,m\},\ B=A\cup\{e\},\ q_0=(s_0,0),\
T=F\times\{0,1,\dots,m\},\ m=max\{i|y^i$ is a subword of $a_{i_1}a_{i_2}\cdots a_{i_{k-1}}\}$\ (note that $m\leq k-2)$ and $\sigma$ the partial function from $Q\times A$ to $\mathcal{P}(Q\times B^*)$ defined by the following equations
\begin{align*}
\sigma((s,i),a)&=\{((sa,0),a)\},\ i\in\{0,1,\dots,m\},\ a\neq y,\\
\sigma((s,i),y)&=\{((sy,i+1),y)\},\ i\in\{0,1,\dots,m-1\},\\
\sigma((s,m),y)&=\{((sy,m),ye^{k-2})\},
\end{align*}
where $sb:=\mu(s,b)$ for $s\in S, b\in A$.

Let $K=\eta_\mathcal{A}(L)\cup\{e\}$. Then by the property of $gsm$, $K$ is regular and maps onto $S^1$. Thus $K_=^{\$}=\Delta_K=K_e^{\$}$ is regular. In addition,
\begin{eqnarray*}
K_{a_j}^\$&=&\{(e,a_j)\}\cup\{(\alpha,\alpha a_j)\delta_B^R|\alpha\in K-\{e\}\}\ (a_j\neq x,a_j\neq y),\\
K_x^\$&=&\{(e,x)\}\cup\{(\alpha,\alpha x)\delta_B^R|\alpha\in K-(B^*\{a_{i_1}a_{i_2}\cdots a_{i_{k-1}}\}\cup \{e\})\}\\
&&\cup\{(\alpha(a_{i_1}a_{i_2}\cdots a_{i_{k-1}})^i,\alpha xy^i)\delta_B^R|\alpha(a_{i_1}a_{i_2}\cdots a_{i_{k-1}})^i\in K,1\leq i\leq m,\\
&&\,\,\ \ \alpha\in K-(B^*\{a_{i_1}a_{i_2}\cdots a_{i_{k-1}}\}\cup \{e\})\}\\
&&\cup\{(\alpha(a_{i_1}a_{i_2}\cdots a_{i_{k-1}})^i,\alpha xy^m(ye^{k-2})^{i-m})\delta_B^R|\alpha(a_{i_1}a_{i_2}\cdots a_{i_{k-1}})^i\in K,\\
&&\ \ \ i>m,\alpha\in K-(B^*\{a_{i_1}a_{i_2}\cdots a_{i_{k-1}}\}\cup \{e\})\},\\
K_y^\$&=&\{(e,y)\}\cup\{(\alpha,\alpha y)\delta_B^R|\alpha\in K-B^*\{y^m,e\}\}\\
&&\cup\{(\alpha,\alpha ye^{k-2})\delta_B^R|\alpha\in K\cap B^*\{y^m,e\}-\{e\}\}
\end{eqnarray*}
are regular, so $(B,K)$ is an automatic structure for $S^1$. Hence $S$ is automatic by Proposition \ref{111}. Since
\begin{eqnarray*}
K'_=&=&\{(e,e)\}\cup\{(\alpha, \alpha)\delta_B^R| \alpha\in K-B^*\{e\}^+\}\\
&&\cup\{(\alpha ye^{k-2}, \alpha ye^j)\delta_B^R|\alpha ye^{k-2}\in K,\ 0\leq j\leq k-2\}\\
&=&\{(e,e)\}\cup\{(\alpha, \alpha)\delta_B^R| \alpha\in K-B^*\{e\}^+\}\\
&&\cup(\Delta_{K-B^*\{ye^{k-2}\}}\cdot\{(ye^{k-2},ye^j)\delta_B^R|0\leq j\leq k-2\})\cap(K\times Pref(K))\delta_B^R
\end{eqnarray*}
is regular, $S^1$ is prefix-automatic.

Case 2-2. Let $a_{i_1}\equiv y$ and $S=sgp^+\langle a_1,a_2,\dots,a_n|y^tux=xy\rangle$ where $|y^tux|=k$ and $u(1)\not\equiv y$. Since $a_{i_1}a_{i_2}\cdots a_{i_k}\not\equiv y^{k-1}x$, we have $u\not\equiv\varepsilon.$

Case 2-2-1. If $y^tux$ is a subword of $y^tuy^tu$, then $(A,L)$ is an automatic structure for S, where $L=A^+-A^*\{y^tu\}A^*$. By noting that
\begin{eqnarray*}
L_=&=&\Delta_L,\\
 L_{a_j}^\$&=&\{(\alpha,\alpha a_j)\delta_A^R|\alpha\in L\}\ (a_j\neq x),\\
L_x^\$&=&\{(\alpha,\alpha x)\delta_A^R|\alpha\in L-A^*\{y^tu\}\}\cup\{(\alpha y^tu,\alpha xy)\delta_A^R|\alpha y^tu\in L\}
\end{eqnarray*}
are regular, $S$ is prefix-automatic since $L$ is closed under prefix words.

Case 2-2-2. If $y^tux$ is not a subword of $y^tuy^tu$, then let $\mathcal{M}=(S,A,\mu,s_0,F)$ be a DFSA accepting $L$. Let $m=max\{i|y^i$ is a subword of $y^tuy^tu\}$. Define a $gsm$ $\mathcal{A}=(Q,A,B,\sigma,q_0,T)$, where $Q=S\times\{0,1,\dots,m\},\ B=A\cup\{e\},\ q_0=(s_0,0),\ T=F\times\{0,1,\dots,m\}$ and $\sigma$ the partial function from $Q\times A$ to $\mathcal{P}(Q\times B^*)$ defined by the following equations
\begin{align*}
\sigma((s,i),a)&=\{((sa,0),a)\},\ i\in\{0,1,\dots,m\},\ a\neq y,\\
\sigma((s,i),y)&=\{((sy,i+1),y)\},\ i\in\{0,1,\dots,m-1\},\\
\sigma((s,m),y)&=\{((sy,m),ye^{k-2})\},
\end{align*}
where $sb:=\mu(s,b)$ for $s\in S, b\in A$.

Let $K=\eta_\mathcal{A}(L)\cup\{e\}$. Then $K$ is regular and maps onto $S^1$. Since
\begin{eqnarray*}
K_=^\$&=&\Delta_K=K_e^\$,\\
K_{a_j}^\$&=&\{(e,a_j)\}\cup\{(\alpha,\alpha a_j)\delta_B^R|\alpha\in K-\{e\}\}\ (a_j\neq x, a_j\neq y),\\
K_y^\$&=&\{(e,y)\}\cup\{(\alpha,\alpha y)\delta_B^R|\alpha\in K-B^*\{y^m,e\}\}\\
&&\cup\{(\alpha,\alpha ye^{k-2})\delta_B^R|\alpha\in K\cap (B^*\{y^m,e\})-\{e\}\},\\
K_x^\$&=&\{(e,x)\}\cup\{(\alpha,\alpha x)\delta_B^R|\alpha\in K-B^*\{y^tu\}\}\\
&&\cup\{(\alpha(y^tu)^i,\alpha xy^i)\delta_B^R|\alpha(y^tu)^i\in K,\alpha\in K-B^*\{y^tu,y\},1\leq i\leq m\}\\
&&\cup\{(\alpha(y^tu)^i,\alpha xy^m(ye^{k-2})^{i-m})\delta_B^R|\alpha(y^tu)^i\in K,i>m,\alpha\in K-B^*\{y^tu,y\}\}\\
&&\cup\{(\alpha y^l(y^tu)^i,\alpha y^lxy^i)\delta_B^R|\alpha y^l(y^tu)^i\in K,\alpha\in K-B^*\{y\},\\
&&\ \ \ \ 1\leq l\leq m-t,1\leq i\leq m\}\\
&&\cup\{(\alpha y^l[y^{m-l}(ye^{k-2})^{t-(m-l)}u](y^tu)^i,\alpha y^lxy^{i+1})\delta_B^R|\\
&&\ \ \ \ \alpha y^l[y^{m-l}(ye^{k-2})^{t-(m-l)}u](y^tu)^i\in K,\alpha\in K-B^*\{y\},\\
&&\ \ \ \ m-t\leq l\leq m,0\leq i\leq m-1\}\\
&&\cup\{(\alpha y^l[y^{m-l}(ye^{k-2})^{t-(m-l)}u](y^tu)^i,\alpha y^lxy^m(ye^{k-2})^{i+1-m})\delta_B^R|\\
&&\ \ \ \ \alpha y^l[y^{m-l}(ye^{k-2})^{t-(m-l)}u](y^tu)^i\in K,\alpha\in K-B^*\{y\},\\
&&\ \ \ \ m-t\leq l\leq m,i>m-1\}\\
&&\cup\{(\alpha[(ye^{k-2})^tu](y^tu)^i,\alpha xy^{i+1})\delta_B^R|\alpha[(ye^{k-2})^tu](y^tu)^i\in K,\\
&&\ \ \ \ \alpha\in K\cap B^*\{y^m,e\}-\{e\},0\leq i\leq m-1\}\\
&&\cup\{(\alpha[(ye^{k-2})^tu](y^tu)^i,\alpha xy^m(ye^{k-2})^{i+1-m})\delta_B^R|\alpha[(ye^{k-2})^tu](y^tu)^i\in K,\\
&&\ \ \ \ \alpha\in K\cap B^*\{y^m,e\}-\{e\},i\geq m\}
\end{eqnarray*}
are all regular, $(B,K)$ is an automatic structure for $S^1$. Thus $S$ is automatic by Proposition \ref{111}. Since
\begin{eqnarray*}
K'_=&=&\{(e,e)\}\cup\{(\alpha, \alpha)\delta_B^R| \alpha\in K-B^*\{e\}^+\}\\
&&\cup\{(\alpha ye^{k-2}, \alpha ye^j)\delta_B^R|\alpha ye^{k-2}\in K,\ 0\leq j\leq k-2\}\\
&=&\{(e,e)\}\cup\{(\alpha, \alpha)\delta_B^R| \alpha\in K-B^*\{e\}^+\}\\
&&\cup(\Delta_{K-B^*\{ye^{k-2}\}}\cdot\{(ye^{k-2},ye^j)\delta_B^R|0\leq j\leq k-2\})\cap(K\times Pref(K))\delta_B^R
\end{eqnarray*}
is regular, $S^1$ is prefix-automatic.
\end{proof}

\begin{theorem}\label{3.1.7}
Let $S=sgp^+\langle a,b|a^kb^l=b^l\rangle$, where $k\geq 1$ and $k+l\geq 1$. Then $S$  is prefix-automatic if and only if $l\leq 1$.
\end{theorem}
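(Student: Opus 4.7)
The plan divides into the two implications. For sufficiency, if $l=1$ the semigroup is $sgp^{+}\langle a,b\mid a^{k}b=b\rangle$ and Theorem \ref{3.1.8} applies directly. If $l=0$, the relation collapses to $a^{k}=\varepsilon$ and $S$ coincides with the monoid of Theorem \ref{3.1.4}; we take $L=\{e\}\cup(A^{+}\setminus A^{*}\{a^{k},e\}A^{*})$, which is closed under taking prefixes in $A^{+}$, so $Pref(L)\subseteq L\cup\{\varepsilon\}$ and therefore $L'_{=}=\Delta_{L}$ is regular by Proposition \ref{22}; combined with the automatic structure of Theorem \ref{3.1.4} this promotes $L$ to a prefix-automatic structure.

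For necessity I argue by contradiction: assume $l\geq 2$ and $(A,L)$ is a prefix-automatic structure for $S$. Refining to a prefix-automatic sub-structure with uniqueness (the refinement of Proposition \ref{uni} preserves prefix-automaticity, since intersecting $L'_{=}$ with the regular relation $\{(\alpha,\beta)\delta_{A}^{R}\mid\alpha\in K,\beta\in Pref(K)\}$ stays regular), and passing via Lemma \ref{preaut} to the prefix-automatic structure $(B,K)=(A\cup\{e\},L\cup\{e\})$ for $S^{1}$, I may assume $K$ represents $S^{1}$ bijectively. Iterating the defining relation gives the crucial identity
\[
(a^{k}b^{l-1})^{n}\cdot b\;=\;b^{n(l-1)+1}\quad\text{in }S^{1},\ n\geq 0.
\]
Write $w_{n}\equiv(a^{k}b^{l-1})^{n}b$ and let $\alpha_{n}\in K$ be the unique representative of $b^{n(l-1)+1}$. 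By extending $w_{n}$ with a generator that triggers no further rewriting in $S^{1}$, I argue that $w_{n}\in Pref(K)$ for infinitely many $n$, whence $(\alpha_{n},w_{n})\delta_{B}^{R}\in K'_{=}$ for those $n$.

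The contradiction comes from pumping the regular language $K'_{=}$ on this infinite family. Since $|w_{n}|=n(k+l-1)+1$ while $|\alpha_{n}|\leq C(n(l-1)+1)$ for some $C$ (by the Lipschitz property of automatic structures) and $l-1<k+l-1$, the $\delta_{B}^{R}$-encoding of the pair has a large $\$$-padded tail on the first coordinate whose length depends rigidly on $n$. A pumpable factor lying in the paired region must increase both coordinates by the same amount, which forces the linear coefficient of $|\alpha_{n}|$ in $n$ to equal $k+l-1$; a pumpable factor lying in the $\$$-padded region increases only the second coordinate, forcing $\alpha_{n}=\alpha_{n'}$ for distinct $n\neq n'$, which contradicts the uniqueness of $K$-representatives (since $b^{n(l-1)+1}\neq b^{n'(l-1)+1}$ in $S^{1}$). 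Either case produces the desired contradiction.

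The main obstacles in the plan are (i) verifying $w_{n}\in Pref(K)$ for infinitely many $n$ without committing to a particular form of $K$, which I would handle by choosing the extending generator based on the structure of $S^{1}$, and (ii) executing the two-case pumping-lemma analysis above to derive the explicit numerical incompatibility; both steps mirror the pumping argument already carried out for $\leftidx{^{\$}}{K}_{b}$ in the proof of Theorem \ref{3.1.8}, transposed here to the prefix-equality language $K'_{=}$.
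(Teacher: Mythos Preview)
Your sufficiency direction is fine; it differs from the paper only in citing Theorem~\ref{3.1.8} rather than Theorem~\ref{3.1.5} for the case $l=1$, and in spelling out the prefix-closure argument for $l=0$.

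The necessity direction, however, has a genuine gap. You want to place the pair $(\alpha_{n},w_{n})$ in $K'_{=}$, which requires the \emph{specific word} $w_{n}\equiv(a^{k}b^{l-1})^{n}b$ to lie in $Pref(K)$. But $K$ is an arbitrary regular language bijecting with $S^{1}$; it is not a normal form and carries no rewriting semantics, so there is no reason whatsoever that the literal string $w_{n}$ should be a prefix of any element of $K$. Your remark about ``extending $w_{n}$ with a generator that triggers no further rewriting'' does not help: the $K$-representative of the element $w_{n}c$ can be any word at all, not $w_{n}c$ itself. This step cannot be repaired without essentially abandoning the $K'_{=}$ route. (A secondary issue is the Lipschitz bound $|\alpha_{n}|\leq C(n(l-1)+1)$: this is not a general property of automatic semigroups and would itself need a semigroup-specific pumping argument, as in the proof of Theorem~\ref{3.1.9}.)

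The paper sidesteps both problems by proving the stronger statement that $S$ is not even \emph{automatic} when $l\geq 2$. Working with $K_{b}^{\$}$ rather than $K'_{=}$, one only needs pairs $(\alpha,\beta)$ with $\alpha,\beta\in K$ and $\alpha b=\beta$ in $S^{1}$; such pairs exist for every element of $S^{1}$ regardless of the concrete form of $K$. Concretely, take $\alpha\in K$ representing $(a^{k})^{s}(a^{k}b)^{t}b^{l-2}$ and $\beta\in K$ representing $b^{t+l-1}$; choosing $s$ large enough relative to $t$ forces the pumpable factor $(u_{1},u_{2})$ to sit over a $b$-free segment of $\alpha$, whence $b\notin con(u_{1})$, and a short $occ(b,\cdot)$ count then forces $con(u_{2})\subseteq\{e,a\}$, contradicting uniqueness of $K$. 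This is exactly the mechanism you invoke from Theorem~\ref{3.1.8}, but applied to $K_{b}^{\$}$ where membership is guaranteed, rather than to $K'_{=}$ where it is not.
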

\begin{proof}
$(\Leftarrow)$ If $l=0$, then by Theorem \ref{3.1.4}, $S$ is prefix-automatic. If $l=1$, then by Theorem \ref{3.1.5}, $S$ is prefix-automatic.

$(\Rightarrow)$ Suppose that $S=sgp^+\langle a,b|a^kb^l=b^l\rangle$ is automatic  for some $l>1$. Then $S^1$ is also automatic by Proposition \ref{111}. Let $B=\{e,a,b\}$. Then there exists $K\subseteq B^+$ such that $(B,K)$ is an automatic structure for $S^1$ with uniqueness by Propositions \ref{cha} and  \ref{uni}.

For any $s,t\in \mathbb{N}$, there exist $\alpha,\beta\in K$ such that $\alpha=(a^k)^s(a^kb)^tb^{l-2},\ \beta=b^{t+l-1}$. Then $(\alpha,\beta)\in K_b^\$ $ and
\begin{eqnarray*}
\alpha&\equiv& \gamma_1a\cdots\gamma_{sk}a\gamma_{sk+1}(a\tau_{11}a\cdots a\tau_{1k}b\tau_{1(k+1)})\cdots(a\tau_{t1}a\cdots a\tau_{tk}b\tau_{t(k+1)})b\xi_1\ldots
b\xi_{l-2},\\
\beta&\equiv&\eta_1b\cdots\eta_{t+l-1}b\eta_{t+l},
\end{eqnarray*}
where $\tau_{tj},\xi_i\in\{e\}^*,\ \eta_t,\eta_{t+1}\cdots\eta_{t+l}\in\{e\}^*,\
\gamma_i,\tau_{ij}\ (i=1,2,\dots,t-1),\eta_1,\eta_{2}\cdots\eta_{t-1}\in\{e,a^k\}^*$.

Denote $\alpha_1\equiv \gamma_1a\gamma_2a\cdots\gamma_{sk}a\gamma_{sk+1}$. Then
$sk\leq|\alpha_1|<N(sk+1)+ks$ and
$t+l-1\leq|\beta|<N(t+l)+t+l-1$, where $N=|S(M(K))|$.

Let $sk>N(t+l)+t+l-1$ and $t+l-1>|S(M(K_b^\$))|$. Then there is a loop $(u_1,u_2)\delta_B^R$ in $(\alpha_1,\beta)\delta_B^R$. If $b\in con(u_2)$, we have $b\not\in con(u_1)$ since $u_1$ is a subword of $\alpha_1$. Suppose $\alpha\equiv w_1u_1w_2$ and
$\beta\equiv w_1'u_2w_2'$. Since $(\alpha,\beta)\delta_B^R\in K_b^\$$ and $(w_1u_1^2w_2,w_1'u_2^2w_2')\delta_B^R\in K_b^\$$, we have $occ(b,w_1u_1^2w_2)\leq occ(b,w_1'u_2^2w_2')-2$, a contradiction. Hence $b\not\in con(u_2)$, so $con(u_2)\subseteq\{e,a\}$ and $\beta\equiv w_1'u_2w_2'=w_1'u_2^{k+1}w_2'\in K$ which contradicts the uniqueness of $K$. Thus $S$ is not automatic.
\end{proof}

\begin{theorem}\label{3.1.9}
Let $S=sgp^{+}\langle a,b\ |\ a^{k}b=ba\rangle\ (k\geq0)$. Then $S$ is prefix-automatic if and only if $k\leq 1$.
\end{theorem}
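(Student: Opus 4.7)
The plan is to prove the two implications separately. For the easy direction $(\Leftarrow)$, each case reduces to an earlier result. When $k=0$ the relation orients to $ba\to b$ under deg-lex, and $\{ba=b\}$ is a Gr\"obner-Shirshov basis (the only potential overlap $w=bab_1=a_1ba$ fails the test $a\neq b$); Theorem \ref{3.1.5} then applies in its first case (since $x\equiv b\not\equiv a\equiv a_{i_k}$), yielding prefix-automaticity. When $k=1$ the relation $ab=ba$ orients to $ba\to ab$ under deg-lex with $b>a$, the same overlap check makes $\{ba=ab\}$ a Gr\"obner-Shirshov basis, and Theorem \ref{3.1.3} applies through condition $(3)$ (relation length two), giving biautomaticity and hence prefix-automaticity.

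For $(\Rightarrow)$ suppose $k\geq 2$ and, for a contradiction, that $S$ is prefix-automatic, hence automatic. By Proposition \ref{uni} I pick an automatic structure $(A,L)$ with uniqueness for $S$, where $A=\{a,b\}$ and $L\subseteq A^+$, and set $N_x=|S(M(L_x^\$))|$ for $x\in A$. The keystone of the argument is the length bound
$$\bigl||\alpha|-|\beta|\bigr|\leq N_x\quad\text{for every }(\alpha,\beta)\in L_x^\$.$$
This rests on two facts. First, $S$ is right-cancellative with respect to each of $a$ and $b$: right multiplication by $a$ merely increments the trailing exponent of the normal form $a^{n_0}ba^{n_1}\cdots ba^{n_m}$, and a case analysis on whether the last block of the target normal form is empty or nonempty shows that right multiplication by $b$ also has a unique preimage. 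Second, if $|\alpha|-|\beta|>N_x$ then the overhang of $(\alpha,\beta)\delta_A^R$ forces a loop in $M(L_x^\$)$; pumping it yields $(\alpha',\beta)\in L_x^\$$ with $\alpha'\not\equiv\alpha$, but $\alpha'x=\beta=\alpha x$ combined with right-$x$-cancellativity gives $\alpha=\alpha'$ in $S$, and uniqueness of $L$ then forces $\alpha\equiv\alpha'$, contradicting the pump. The case $|\beta|-|\alpha|>N_x$ is symmetric, pumping inside $\beta$.

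The contradiction now follows from a telescoping chain. For each $m\geq 1$, let $\alpha_i\in L$ represent $b^i a^{k^{m-i}}$, $0\leq i\leq m$. Since $k\mid k^{m-i+1}$, the identity $a^{k^{m-i+1}}b=ba^{k^{m-i}}$ holds in $S$, so $\alpha_{i-1}\cdot b=\alpha_i$ in $S$ and $(\alpha_{i-1},\alpha_i)\in L_b^\$$. Telescoping the length bound gives $|\alpha_0|-|\alpha_m|\leq N_b\,m$. The $S$-class of $a^{k^m}$ is the singleton $\{a^{k^m}\}$ (no defining rewrite applies to a $b$-free word), so $|\alpha_0|=k^m$; similarly $b^m$ is its own $L$-representative, and $(b^m,\alpha_m)\in L_a^\$$ combined with the length bound gives $|\alpha_m|\leq m+N_a$. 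Combining, $k^m\leq(N_b+1)m+N_a$, which is impossible for $k\geq 2$ once $m$ is sufficiently large. I anticipate the main obstacle to be the length bound, which rests on the right-cancellativity verification (most delicately in the $b$-case, where the two subcases on the last block of the target normal form must be separated cleanly); once that is in place, the telescoping and the exponential-versus-linear comparison are routine.
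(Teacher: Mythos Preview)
Your overall architecture matches the paper's: the $(\Leftarrow)$ direction is handled identically via Theorems \ref{3.1.5} and \ref{3.1.3}, and for $(\Rightarrow)$ you use the same telescoping chain (representatives of $a^{k^m}, a^{k^{m-1}}b,\dots,b^m a$) together with a uniform length bound on $L_x^\$$ to pit exponential growth against a linear bound. Your use of right-cancellativity of $S$ by $a$ and by $b$ to derive the length bound is a clean alternative to the paper's argument, which instead works over $B=\{e,a,b\}$ and shows that a pumped segment in the overhang must consist entirely of $e$'s (by counting $occ(a,\cdot)$ and $occ(b,\cdot)$), contradicting uniqueness.

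There is, however, a genuine gap in how you set up the automatic structure. You write ``By Proposition \ref{uni} I pick an automatic structure $(A,L)$ with uniqueness for $S$, where $A=\{a,b\}$.'' But Proposition \ref{uni} only refines an \emph{existing} structure over a \emph{given} alphabet; it does not let you choose the alphabet. For semigroups (unlike monoids) automaticity can depend on the generating set, so you cannot assume the structure lives over $\{a,b\}$. The paper avoids this by passing to $S^1$ via Proposition \ref{111} and then invoking Proposition \ref{cha} (valid for monoids) to obtain a structure over $B=\{e,a,b\}$ before applying Proposition \ref{uni}. Your argument can be repaired along the same lines---right-cancellativity by $a$ and $b$ persists in $S^1$, so your pumping/uniqueness step survives---but then your singleton-class claims must be weakened: over $B$ the class of $a^{k^m}$ is no longer a singleton, though a $b$-count/$a$-count argument still gives $|\alpha_0|\geq k^m$; and the $L$-representative of $b^m$ need not be $b^m$ literally, so you need a separate bound on its length (the paper uses a pumping argument in $L$ to get $|\gamma_m|<(m+1)N+m$). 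With these adjustments your approach goes through; as written, the step fixing $A=\{a,b\}$ is unjustified.
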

\begin{proof} $(\Leftarrow)$ If $k\leq1$, then $S$ is prefix-automatic by Theorems \ref{3.1.5} and  \ref{3.1.3}.

$(\Rightarrow)$ Suppose $k\geq2$. Since $S$ is automatic, $S^{1}$ is automatic by Proposition \ref{111}. So,  there exists
an automatic structure $(B,L)$ with uniqueness for $S^{1}$, where $B=\{e,a,b\}$ and $e$ represents the identity of $S^{1}$. Let $N=|S(M(L))|$, $N_==|S(M(L_=^\$))|$, $N_{a}=|S(M(L_{a}^\$))|$, $N_{b}=|S(M(L_{b}^\$))|$, $\widetilde{N}=max\{ N,N_{=},N_{a},N_{b}\}$.

First we claim that $||\alpha|-|\beta||\leq \widetilde{N}$ for any $(\alpha,\beta)\delta_{B}^{R}\in L_{a}^\$\cup L_{b}^\$$. Otherwise there exists $(\alpha,\beta)\delta_{B}^{R}\in L_{a}^\$\cup L_{b}^\$$ such that $||\alpha|-|\beta||> \widetilde{N}$. We can suppose $|\alpha|>|\beta|+\widetilde{N}$ and $\alpha\equiv\alpha_{1}\alpha_{2}$ with $|\alpha_{1}|=|\beta|$. Then $(\alpha_{2},\varepsilon)\delta_{B}^{R}$ contains a subword $u$ that can be pumped in $M(L_{a}^\$)$ or in  $M(L_{b}^\$)$. So either $L_{a}^\$$ or $L_{b}^\$$ contains words of the form $(\widetilde{\alpha}_{1}u^{j}\widetilde{\alpha}_{2},\beta)\delta_{B}^{R}$ with $j\in\mathbb{N}$ where $\widetilde{\alpha}_{1}u\widetilde{\alpha}_{2}\equiv\alpha$. Since $occ(b,\beta)=occ(b,\widetilde{\alpha}_1u\widetilde{\alpha}_2)$, we have $b\not\in con(u)$. If $a\in con(u)$, then there exists $j\geq1$ such that $occ(a,\widetilde{\alpha}_1u^j\widetilde{\alpha}_2)>occ(a,\gamma)$ for any $\gamma\in B^*$ and $\gamma=\beta$, a contradiction. So $u\equiv e^{|u|}$, which contradicts the uniqueness of $L$.

   For $n,i\in\mathbb{N}, n,i\geq1$, let $\beta_{n},\gamma_{n},\beta_{n}^{(i)}\in L$ with $\beta_{n}=a^{k^{n}},\gamma_{n}=b^{n},\beta_{n}^{(i)}=a^{k^{n}}b^{i}$. We have $|\beta|\geq k^{n},|\gamma_{n}|<(n+1)N+n$ and by the above claim
\begin{eqnarray*}
&&||\beta_{n}|-|\beta_{n}^{(1)}||\leq\widetilde{N},\\
&&||\beta_{n}^{(1)}|-|\beta_{n}^{(2)}||\leq\widetilde{N},\\
&& \ \ \ \ \ \ \ \vdots\\
&&||\beta_{n}^{(n-1)}|-|\beta_{n}^{(n)}||\leq\widetilde{N}
\end{eqnarray*}
 and  so $||\beta_{n}|-|\beta_{n}^{(n)}||\leq n\widetilde{N}$. Hence $|\beta_{n}^{(n)}|\geq|\beta_{n}|-n\widetilde{N}\geq k^{n}-n\widetilde{N}$.
   On the other hand, since $(\gamma_{n},\beta_{n}^{(n)})\in L_{a}^\$\cup L_{b}^\$ $, by the claim we have $||\gamma_{n}|-|\beta_{(n)}^n||\leq\widetilde{N}$. However, $|\beta_{n}^{(n)}|-|\gamma_n|>k^n-n\widetilde{N}-(n+1)N-n\rightarrow\infty$, that is, there exists some $n\in \mathbb{N}, n\geq1$ such that $|\beta_{n}^{(n)}|\geq \widetilde{N}+|\gamma_n|$, a contradiction.

This shows that $k\leq1$.
\end{proof}

\subsection{Automaticity of semigroups of one-relator of length $\leq 3$}

In this section, we prove the following theorem.

\begin{theorem}\label{3.2.6}
Let $S=sgp\langle A|u=v\rangle$, where $A=\{a_1,a_2,\dots,a_n\},\ n\geq2,\ u,v\in A^*,\ |v|\leq|u|\leq3$ and $a,b\in A,\ a\neq b$. Then
\begin{enumerate}
\item[(1)]  $S$ is  prefix-automatic if $u=v\not\in \{aba=ba,\ aab=ba, abb=bb\}$. Moreover, if $u=v\in \{aba=ba,\ aab=ba, abb=bb\}$ then $S$ is not automatic.
\item[(2)] $S$ is biautomatic if one of the following holds: (i) $|u|=3,\ |v|=0$, (ii) $|u|=|v|=3$, (iii) $|u|=2$ and $u=v\not\in \{ab=a,\ ab=b\}$. Moreover, if $u=v\in \{ab=a,\ ab=b\}$ then $S$ is not biautomatic.

\end{enumerate}
\end{theorem}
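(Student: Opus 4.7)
The plan is to proceed by exhaustive case analysis on the pair $(|u|,|v|)$ with $0\le|v|\le|u|\le 3$, invoking the sufficient conditions of Theorems \ref{3.1.3}--\ref{3.1.6} in the positive cases and adapting the length-contradiction strategies of Theorems \ref{3.1.7}--\ref{3.1.9} in the negative ones. The trivial subcases $|u|\le 1$ reduce $S$ to a free monoid on a subset of $A$ (after possibly eliminating a collapsed generator), so they are immediately biautomatic and prefix-automatic.

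For the positive half of part (1), I would dispatch the cases as follows: $|u|=3,|v|=0$ by Theorem \ref{3.1.4}; $|u|=|v|=3$ by Theorem \ref{3.1.3}, verifying that at least one of conditions (1)--(5) holds in each subcase (with $k=3$ this is a finite check), and handling the occasional subcase where $\{u=v\}$ is not yet a Gr\"obner--Shirshov basis (e.g.\ $aaa=aab$) by appealing to the length-preserving character of the relation to construct the required automatic structure explicitly; $|u|=3,|v|=2$ by Theorem \ref{3.1.6} when its hypotheses hold, and by a direct construction from the Gr\"obner--Shirshov closure otherwise; $|u|=3,|v|=1$ by Theorem \ref{3.1.5}; and $|u|=2$ by Theorems \ref{3.1.3}, \ref{3.1.5}, \ref{3.1.4} according as $|v|=2,1,0$. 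For the positive half of part (2), cases (i) and (ii) fall out of Theorems \ref{3.1.4} and \ref{3.1.3} (both assert biautomaticity, not just prefix-automaticity); case (iii) with $|v|=2$ is Theorem \ref{3.1.3}'s condition (3), $|v|=0$ is Theorem \ref{3.1.4}, and $|v|=1$ not in the excluded set reduces (up to swapping generator names) to $aa=a$, $aa=b$, $bb=a$, $bb=b$, each of which admits a hand-built biautomatic structure because both right- and left-multiplication by any letter either append a letter (which never creates a new copy of $u$) or rewrite a terminal or initial $u$-subword to $v$, a regular operation.

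For the negative assertions, $aab=ba$ and $abb=bb$ are exactly the $k=2$ case of Theorem \ref{3.1.9} and the $(k,l)=(1,2)$ case of Theorem \ref{3.1.7}, and $ab=b$ is the $k=1$ case of Theorem \ref{3.1.8}; the relation $ab=a$ follows by applying Theorem \ref{3.1.8} to $S^{rev}$, since Lemma \ref{revaut} implies that biautomaticity (being the conjunction of all four one-sided automaticities) is preserved under reversal. The only case not covered by any previous theorem is $aba=ba$, and this is the main obstacle; I plan to handle it as follows. First, Shirshov's algorithm applied to $\{aba=ba\}$ yields the infinite Gr\"obner--Shirshov basis $\{ab^na=b^na:n\ge 1\}$, with normal forms $b^ja^kb^l$. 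Second, since every relation $ab^na=b^na$ preserves the count of $b$'s, any word representing $a^n\in S^1$ contains zero $b$'s, and any word representing $a^nb\in S^1$ contains exactly one $b$ and at least $n$ copies of $a$. Third, assuming an automatic structure $(B,K)$ with uniqueness for $S^1$ (via Propositions \ref{cha} and \ref{uni}) and reproducing the ``first claim'' from the proof of Theorem \ref{3.1.9}, right-multiplication by a generator changes the length of a $K$-representative by at most a fixed constant. Since $a^nb\cdot a=ba$ in $S$, the representative $\alpha_n$ of $a^nb$ satisfies $|\alpha_n|\ge n+1$ while being forced to lie within a constant of the fixed representative of $ba$, yielding a contradiction for large $n$. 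The secondary obstacle, namely the $|u|=|v|=3$ subcases where $\{u=v\}$ is not already a Gr\"obner--Shirshov basis, is less severe: the length-preserving character of the relation keeps each length-$n$ fibre finite and makes the construction of a regular normal form language straightforward, even though one cannot invoke Theorem \ref{3.1.3} off the shelf.
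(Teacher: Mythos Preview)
Your overall plan mirrors the paper's: exhaustive case analysis on $(|u|,|v|)$, invoking Theorems~\ref{3.1.3}--\ref{3.1.6} for the positive cases and Theorems~\ref{3.1.7}--\ref{3.1.9} for the negative ones. However, your treatment of $aba=ba$ has a genuine gap.

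You propose to reproduce the ``first claim'' of Theorem~\ref{3.1.9}, namely that $||\alpha|-|\beta||$ is bounded for $(\alpha,\beta)\delta_B^R\in K_a^\$\cup K_b^\$$. The proof of that claim for $a^kb=ba$ uses that, for a fixed element, the number of $a$'s in any representative is bounded above. This is exactly what \emph{fails} for $aba=ba$: since $ba=aba=a^2ba=\cdots$, the element $ba$ has representatives with arbitrarily many $a$'s. Concretely, in your one-parameter family $(\alpha_n,\beta)$ with $\alpha_n$ representing $a^nb$ and $\beta$ representing $ba$, when you pump in the $\$$-tail of $(\alpha_n,\beta)\delta_B^R$ the pumpable segment $u$ may lie in $\{e,a\}^*$ with $occ(a,u)\ge 1$; then $\alpha_n' u^j\alpha_n''$ represents $a^{\,n+(j-1)occ(a,u)}b$, a genuinely different element that still satisfies $(\cdot\,a)=ba$. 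Neither a $b$-count mismatch nor a uniqueness violation results. The paper's Lemma~\ref{3.2.4} avoids this by using a \emph{two-parameter} family $\alpha=a^jb^i$, $\beta=b^ia$: choosing $i>|S(M(K_a^\$))|$ forces a loop $(u,v)$ whose second coordinate $v$ lies inside the $b$-rich prefix $\tau_1b\cdots\tau_ib$ of $\beta$, and then one argues that either $b\in con(v)$ (forcing a $b$-count mismatch, since $u$ sits inside $\alpha_1\in\{e,a\}^*$) or $con(v)\subseteq\{e,a\}$ (so deleting $v$ from $\beta$ yields a distinct $K$-representative of the same element $b^ia$, contradicting uniqueness). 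The second parameter $i$ is essential to make this dichotomy work.

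A secondary issue: several of your blanket invocations presuppose that $\{u=v\}$ is already a Gr\"obner--Shirshov basis. This fails for $aba=\varepsilon$, for $aba=x$ and $aaa=x$ with $x\not\equiv a$, and for many $|u|=3,|v|=2$ and $|u|=|v|=3$ cases; in each such case the paper computes the completion and verifies regularity of every $L_c^\$$ (and, where needed, ${}^\$_cL$) by hand. Your remark that the length-preserving character of the relation makes the $|u|=|v|=3$ construction ``straightforward'' understates the work: for $aba=bab$ none of Theorem~\ref{3.1.3}'s conditions (1)--(5) applies, the completion is $\{aba=bab,\ ab^{i+1}ab=babba^{i}:i\ge 1\}$, and the paper's verification of biautomaticity (particularly the description of ${}^\$_aL$) is the longest computation in Lemma~\ref{3.2.3}.
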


We prove Theorem \ref{3.2.6} step by step.

We fix $A=\{a_1,a_2,\dots,a_n\},\  u,v\in A^*,\ S=sgp\langle A|u=v\rangle$.

\begin{lemma}\label{3.2.1}
Suppose $|u|=|v|=2$. Then $S$ is biautomatic and prefix-automatic.
\end{lemma}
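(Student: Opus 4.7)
The plan is to split on whether $u$ is a square. If $u \equiv ab$ with $a \neq b$, then the only candidate for a self-overlap of $u$ would force $a = b$, and no inclusion composition exists since $|u| = |v|$; hence $\{u = v\}$ is already a Gr\"obner--Shirshov basis. Theorem~\ref{3.1.3}(3) (with $k = 2$) applied to $sgp^{+}\langle A \mid u = v\rangle$ then yields both biautomaticity and prefix-automaticity. Since $|u| = |v|$ identifies no word with $\varepsilon$, the monoid $S$ is obtained by adjoining an identity to $sgp^{+}\langle A \mid u = v\rangle$, so Proposition~\ref{111} and Lemma~\ref{preaut} lift these properties to $S$.

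If $u \equiv a^2$, then $v \equiv cd$ with $a^2 > cd$, and the only composition to resolve is $(f,f)_{a^3} = av - va$. Suppose first that $a \notin con(v)$. Then $av$ and $va$ are both reduced, the leading term is $av$ (since $a > c$), and I would adjoin $av = va$. A short check, verifying that the only remaining candidate composition at $w = a^2 v$ reduces to $0$, establishes that $R = \{a^2 = v,\; av = va\}$ is a finite Gr\"obner--Shirshov basis, so $L = A^{+} \setminus A^{*}\{a^2,\ av\}A^{*}$ is regular. Because the new relation is length-preserving, writing each of $L_{=}^{\$}$, $L_{x}^{\$}$ and ${}_{x}^{\$}L$ (for $x \in A$) as a finite union of regular expressions is routine, and Proposition~\ref{sam} then upgrades to biautomaticity. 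Prefix-closure of $L$ delivers prefix-automaticity.

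Now suppose $a \in con(v)$, so $v \equiv ab$ or $v \equiv ba$ with $a > b$. I would show by induction that the Gr\"obner--Shirshov completion of $\{a^2 = v\}$ is the infinite family $\{a b^{n} a = a b^{n+1}\}_{n \ge 0}$ when $v = ab$ and $\{a b^{n} a = b^{n+1} a\}_{n \ge 0}$ when $v = ba$; in either subcase the leading terms form the uniform pattern $a b^{*} a$, so $L = A^{+} \setminus A^{*}\, a\, b^{*}\, a\, A^{*}$ is regular. Direct case analysis of the normal form of $\alpha a$, $\alpha b$, $a\alpha$, and $b\alpha$ for $\alpha \in L$ (based on whether the trailing, respectively leading, portion of $\alpha$ matches the pattern $a b^{n}$) expresses each of $L_{=}^{\$}, L_{x}^{\$}, {}_{x}^{\$}L$ as a finite union of regular subexpressions in the alphabet $A(2,\$)$. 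The length difference stays uniformly bounded by $1$, so Proposition~\ref{sam} finishes the biautomatic check, and prefix-closure of $L$ gives prefix-automaticity.

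The main obstacle is the last subcase: the Gr\"obner--Shirshov completion is infinite, so Theorem~\ref{3.1.3} cannot be invoked directly. The key insight is that despite the infinite list of relations, their leading terms all fit the pattern $a b^{*} a$, whose avoidance defines a regular language; the remaining automatic verification then proceeds by direct finite-automaton construction rather than by a structural theorem.
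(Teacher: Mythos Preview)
Your argument is correct, and your case split is actually finer than the paper's. Both you and the paper dispose of the non-square case $u\equiv ab$, $a\neq b$, in the same way, via Theorem~\ref{3.1.3}(3). For $u\equiv a^{2}$, however, the paper treats only the subcase $v\equiv c^{2}$ (with $c\neq a$): it reduces via the free-product result (Proposition~\ref{fre}) to the two-generator semigroup $sgp^{+}\langle a,c\mid a^{2}=c^{2}\rangle$, completes to the finite basis $\{a^{2}=c^{2},\ ac^{2}=c^{2}a\}$, and checks regularity of each $L_{x}^{\$}$ and ${}^{\$}_{x}L$ by hand. Your subcase $a\notin con(v)$ subsumes this with the same two-element completion $\{a^{2}=v,\ av=va\}$, but also covers non-square $v$, and you work directly over the full alphabet rather than first stripping off free factors.

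Your additional subcase $a\in con(v)$ (so $v\equiv ab$ or $v\equiv ba$ with $b<a$) is not addressed in the paper's proof of this lemma: the paper's two cases together leave $u=a^{2}$, $c\not\equiv d$ untouched. Your treatment of it is sound: the Shirshov completion is infinite, but its leading terms form the regular pattern $ab^{*}a$, so the normal-form language is still regular; all relations in the completion are length-preserving, so Proposition~\ref{sam} applies with length defect at most $1$; and the explicit description of $L_{x}^{\$}$ and ${}^{\$}_{x}L$ goes through as you outline. This is a genuine addition to the argument rather than a mere reorganisation.
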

\begin{proof} Case 1. Suppose $a\not\equiv b,\ a,b\in A$. Then $\{ab=cd\}$ is a Gr\"{o}bner-Shirshov basis. Since $|u|=2$, $S$ is biautomatic and prefix-automatic by Theorem \ref{3.1.3}.

 Case 2. Suppose $a\equiv b,c\equiv d$. By Proposition \ref{fre}, we just need to prove $\widetilde{S}=sgp^+\langle a,c|a^2=c^2\rangle$ is biautomatic. Since $\{a^2=c^2,ac^2=c^2a\}$ is a Gr\"{o}bner-Shirshov basis for $\widetilde{S}$, $L=A^+-A^*\{a^2,ac^2\}A^*$ is a normal form of $S$, where $A=\{a,c\}$. Clearly $L_=^\$=\newcommand{\leftexp}[2]{{\vphantom{#2}}^{#1}{#2}}\leftexp{\$}{L}_==\Delta_L$ is regular. Now,
\begin{eqnarray*}
L_a^\$&=&\{(\alpha,\alpha a)\delta_A^R|\alpha\in L-A^*\{a\}\}\\
&&\cup\{(\alpha(ac)^ia,\alpha c^2(ac)^i)\delta_A^R|\alpha\in L-A^*\{ac,a\}, i\geq0\}\\
&=&\Delta_{L-A^*\{a\}}\{(\$,a)\}\\
&&\cup((\Delta_{L-A^*\{ac,a\}}\{(\$,c)(\$,c)\})\odot(\{(a,a)(c,c)\}^*\{(a,\$)\})),\\
L_c^\$&=&\{(\alpha,\alpha c)\delta_A^R|\alpha\in L-A^*\{ac\}\}\\
&&\cup\{(\alpha(ac)^i,\alpha c(ca)^i)\delta_A^R|\alpha\in L-A^*\{ac,a\}, i\geq1\},\\
\leftidx{_a^\$}{L}&=&\{(\alpha,a\alpha)\delta_A^L|\alpha\in L-\{a,c^2\}A^*\}\\
&&\cup\{(a\alpha,c^2\alpha)\delta_A^L|\alpha\in L-\{a,c^2\}A^*\}\\
&&\cup\{((c^2)^i\alpha,(c^2)^ia\alpha)\delta_A^L|\alpha\in L-\{a,c^2\}A^*,i\geq1\}\\
&&\cup\{((c^2)^ia\alpha,(c^2)^{i+1}\alpha)\delta_A^L|\alpha\in L-\{c^2,a\}A^*,i\geq1\},\\
\leftidx{_c^\$}{L}&=&\{(\alpha,c\alpha)\delta_A^L|\alpha\in L\}
\end{eqnarray*}
are regular.

Since $||\alpha|-|\beta||\leq1$ for $(\alpha,\beta)\delta_A^R\in L_a^\$\cup L_c^\$$ and $(\alpha,\beta)\delta_A^L\in\leftidx{_a^\$}{L}\cup \leftidx{_c^\$}L$, we have $\leftidx{^\$}L_a,\leftidx{^\$}L_c$, $\leftidx{_a}L^\$,\leftidx{_c}L^\$$ are regular by Proposition \ref{sam}. Hence $\widetilde{S}$ is biautomatic. Thus, S is biautomatic and   prefix-automatic.
\end{proof}

\begin{lemma}\label{3.2.2}
Suppose $|u|=2,|v|\leq2$. Then $S$ is  prefix-automatic. Moreover, $S$ is biautomatic if and only if $u=v\not\in\{ab=a, ab=b\},\ a,b\in A$.
\end{lemma}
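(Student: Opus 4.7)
The plan is to split into three cases by $|v|\in\{0,1,2\}$, using $|v|\leq|u|=2$. The case $|v|=2$ is precisely Lemma \ref{3.2.1}. For $|v|=0$, with $u\equiv xy$ ($x,y\in A$), the singleton $\{xy=\varepsilon\}$ has no non-trivial self-composition and hence is a Gr\"{o}bner-Shirshov basis; Theorem \ref{3.1.4} then supplies a biautomatic structure $(B,K)$ on $S\cong\widetilde{S}$ with $K=\{e\}\cup(B^+-B^*\{u,e\}B^*)$. Since every non-empty prefix of a word in $K$ is again in $K$, the language $K'_==\Delta_K$ is regular, so $(B,K)$ is in fact a prefix-automatic structure and $S$ is prefix-automatic.

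The interesting case is $|v|=1$. Write $u\equiv ab$, $v\equiv c$ with $a,b,c\in A$; again $\{ab=c\}$ is automatically a Gr\"{o}bner-Shirshov basis. The first sub-case is $c\notin\{a,b\}$: the inequalities $v(t)\not\equiv u[t]$ and $v[t]\not\equiv u(t)$ of Theorem \ref{3.1.1}(ii) reduce at $t=1$ to $c\ne b$ and $c\ne a$, and are vacuous for $t\geq 2$ by length, so that theorem yields biautomaticity and prefix-automaticity in one shot. The second sub-case is the idempotent $a\equiv b\equiv c$, i.e.\ $aa=a$. Here Theorem \ref{3.1.1}(ii) does not apply, but the normal form $L=A^+-A^*\{aa\}A^*$ is prefix-closed and left or right multiplication by any letter $d$ either extends the normal form by one letter or leaves it unchanged; the latter occurs precisely when $d\equiv a$ and the word already has an $a$ at the relevant end, so that a single application of $aa\to a$ absorbs the new letter. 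Consequently, each of the four biautomatic languages decomposes into a finite union of products of the shape $\Delta_{L_i}\cdot\{(\$,d)\}$, $\{(\$,d)\}\cdot\Delta_{L_i}$, or $\Delta_{L_i}$ for regular $L_i\subseteq L$, hence is regular.

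The two bad sub-cases are $a\not\equiv b$ with $c\equiv b$, i.e.\ $ab=b$, and $a\not\equiv b$ with $c\equiv a$, i.e.\ $ab=a$. In the first, Theorem \ref{3.1.8} with $k=1$ states verbatim that $S$ is prefix-automatic but not biautomatic. In the second, prefix-automaticity follows from Theorem \ref{3.1.5} (we are in the case $x\not\equiv a_{i_k}$, i.e.\ $a\not\equiv b$). For non-biautomaticity, observe that $S^{rev}$ has defining relation $ba=a$, which after the relabelling $a\leftrightarrow b$ is the presentation of the previous sub-case; invoking Lemma \ref{revaut} (and noting that biautomaticity is invariant under reversal, since the four one-sided automatic properties are permuted among themselves by the reversal operation) transfers the failure back to $S$.

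The only step that goes beyond a direct citation of earlier results is the explicit verification of biautomaticity in the idempotent sub-case $aa=a$; this is where the main bookkeeping lives. The obstacle is organisational rather than mathematical: enumerating the four biautomatic languages in a uniform way. I would handle this by treating $d\equiv a$ and $d\not\equiv a$ separately, and in each case splitting $L$ according to whether the relevant end of the word is $a$.
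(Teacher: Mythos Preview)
Your case split for $|v|=1$ has a genuine gap. You assert that $\{ab=c\}$ is ``automatically a Gr\"{o}bner--Shirshov basis'' and then invoke Theorem~\ref{3.1.1}(ii) whenever $c\notin\{a,b\}$. But when $a\equiv b$ and $c\not\equiv a$ (i.e.\ the relation $aa=c$), the self-overlap at the ambiguity $w=aaa$ gives the composition $(aa-c)a-a(aa-c)=ac-ca$, which is \emph{not} trivial modulo $(\{aa-c\},aaa)$ since neither $ac$ nor $ca$ contains $aa$ as a subword. Hence $\{aa=c\}$ is not a Gr\"{o}bner--Shirshov basis, and the hypothesis of Theorem~\ref{3.1.1} is not met. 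Even if you pass to the completion $\{aa=c,\ ac=ca\}$, condition~(ii) of Theorem~\ref{3.1.1} fails: with $u_2=ac$, $v_2=ca$, $u_1=aa$ one has $v_2[1]\equiv a\equiv u_1(1)$. So no form of Theorem~\ref{3.1.1} disposes of this sub-case.

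The paper treats this sub-case (its Case~3-4) separately: after reducing via Proposition~\ref{fre} to the two-generator semigroup $sgp^+\langle a,c\mid aa=c\rangle$, it uses the completed basis $\{a^2=c,\ ac=ca\}$, whose normal form is $L=\{c^ia^j\mid i\geq0,\ j\in\{0,1\},\ i+j\geq1\}$, and verifies the four biautomatic languages by hand. Your remaining sub-cases (including your direct treatment of $aa=a$ over the full alphabet, as opposed to the paper's free-product reduction to the one-element semigroup) are fine and match the paper's approach; only this missing sub-case needs an explicit argument.
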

\begin{proof} Case 1. If $|v|=0$, then $S$ is biautomatic and prefix-automatic by Theorem \ref{3.1.4}.

By Proposition \ref{111},  in order to prove that $S$ is biautomatic, it suffices to prove  $\widetilde{S}=sgp^+\langle a_1,a_2,\dots,a_n|u=v\rangle$ is biautomatic.

Case 2. If $|v|=2$, then $S$ is biautomatic and prefix-automatic by Lemma \ref{3.2.1}.

Case 3. If $|v|=1$, suppose $u\equiv ab,v\equiv c$.

 Case 3-1. If $a\not\equiv b$, $c\not\equiv a$ and $c\not\equiv b$, then $\{ab=c\}$ is a Gr\"{o}bner-Shirshov basis in $\widetilde{S}$. So $\widetilde{S}$ is biautomatic and $S$ is prefix-automatic by Theorem \ref{3.1.1}.

 Case 3-2. If $a\not\equiv b$ and $ c\equiv b$, then $\{ab=b\}$ is a Gr\"{o}bner-Shirshov basis in $\widetilde{S}$ and by Theorem \ref{3.1.8}, $\widetilde{S}$ is automatic but not biautomatic, and $S$ is prefix-automatic.

Case 3-3. If $a\not\equiv b $ and $c\equiv a$, then $\widetilde{S}=spg^+\langle a, b| ab=a\rangle$ is isomorphic to $sgp^+\langle a, b| ab=b\rangle^{rev}$ and by Case 3-2, Lemma \ref{revaut} and Theorem \ref{3.1.5}, $\widetilde{S}$ is automatic but not biautomatic, and $S$ is prefix-automatic.



Case 3-4. If $a\equiv b$ and $c\not\equiv a$, then by Proposition \ref{fre}, it is sufficient to prove $S_1=sgp^+\langle a,c|aa=c\rangle$ is biautomatic.

Note that  $\{a^2=c,ac=ca\}$ is a Gr\"{o}bner-Shirshov basis in $S_1$. Let $A_1=\{a, c\}$ and $L=\{c^ia^j|i\geq0, j\in\{0,1\}, i+j\geq1\}$. Then $L$ is a normal form of $S_1$. Clearly,   $L$ and  $L_=^\$=\Delta_L$ are regular. Since
$$L_a^\$=\{(c^i,c^ia)\delta_{A_1}^R|i\geq1\}\cup\{(c^ia,c^{i+1})\delta_{A_1}^R|i\geq 0\}$$
and
$$L_c^\$=\{(c^i, c^{i+1})\delta_{A_1}^R|i\geq 1\}\cup\{(c^ia,c^{i+1}a)\delta_{A_1}^R|i\geq 0\}$$
are regular, $S_1$ is an  automatic semigroup. Let $\widetilde{L}=A^+-A^*\{aa,ac\}A^*$. Then by Proposition \ref{fre}, $(A, \widetilde{L})$ is an automatic structure for $\widetilde{S}$.

For any $x\in A\cup\{\varepsilon\}$,  $(u,v)\delta_{A}^R\in L_x^{\$}$, we have $||u|-|v||\leq 1$. So by Proposition \ref{sam}, $^{\$}L_x$ is regular. Hence $(A, \widetilde{L})$ is a left-right automatic structure for $\widetilde{S}$.

Since $^{\$}_=L=\Delta_L=_=L^{\$}$,
\begin{eqnarray*}
_a^{\$}L&=&\{(c^i, c^ia)\delta_{A_1}^L|i\geq 1\}\cup\{(c^ia,c^{i+1})\delta_{A_1}^L|i\geq 0\}\\
&=&\{(\$, c)\}\{(c, c)\}^*\{(c,a)\}\cup\{(c,c)\}^*\{(a,c)\},\\
_c^{\$}L&=&\{(\alpha, c\alpha)\delta_{A_1}^L|\alpha \in L\}
\end{eqnarray*}
are regular, $S_1$ is a left-left automatic semigroup.

For any $x\in A\cup\{\varepsilon\}$, $(u,v)\delta_{A}^L\in _x^{\$}L$, we have $||u|-|v||\leq 1$ and by Proposition \ref{sam}, $_xL^{\$}$ is regular. Hence $(A, \widetilde{L})$ is a right-left automatic structure for $\widetilde{S}$.

Therefore, $\widetilde{S}$ is  biautomatic and prefix-automatic, and so is $S$.

Case 3-5. If $a\equiv b$ and $c\equiv a$, then by Propositions \ref{fre} and \ref{sam}, it is sufficient to prove that $S_1=sgp^+\langle a |aa=a\rangle$ is biautomatic. Obviously, $S_1=\{a\}$  is biautomatic. Therefore, $\widetilde{S}$ is  biautomatic and prefix-automatic.

The lemma is proved.
\end{proof}

\begin{lemma}\label{3.2.3}
Suppose $|u|=|v|=3$. Then $S$ is biautomatic and prefix-automatic.
\end{lemma}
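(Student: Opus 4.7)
The plan is a case analysis on the pair $(u,v)$, with WLOG $u>v$ in the deg-lex ordering. I would apply Theorem \ref{3.1.3} whenever its hypotheses are met and construct the biautomatic structure directly for a short residual list. A useful preliminary is that, since $|u|=|v|=3$, the defining relation is homogeneous, so any normal-form language $L = A^+ - A^*UA^*$ (where $U$ is the set of leading monomials of the completed Gr\"obner-Shirshov basis) is prefix-closed; consequently an automatic structure built on such an $L$ is automatically prefix-automatic, and length differences in every transition language are bounded by $1$, so Proposition \ref{sam} lets me pass freely between $\delta_A^R$ and $\delta_A^L$.

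The first step is to decide when $\{u=v\}$ itself is a Gr\"obner-Shirshov basis. Because $|u|=3$, the only potentially nontrivial compositions are self-intersections of $u$ at overlaps of length $1$ or $2$; a direct check shows that $\{u=v\}$ is a GSB unless $u$ has the form $xyx$ (with $x=y$ allowed, i.e., $u \in \{xyx, xxx\}$). When $\{u=v\}$ is a GSB, I apply Theorem \ref{3.1.3}: condition $(2)$ covers $|con(u)|=3$, condition $(4)$ covers $con(u) \not\subseteq con(v)$, and condition $(5)$ covers the case that $v$ is not of $xyx$-form. Writing the two letters of $con(u)=con(v)$ as $x>y$, the only residual pairs are
\[
(u,v) \in \{(xxy,xyx),\ (xxy,yxy),\ (xyy,yxy),\ (yxx,yxy)\}.
\]
For each of these four pairs I take $L = A^+ - A^*\{u\}A^*$ and spell out the transition languages $L_c^{\$}$, $\leftidx{^{\$}}{L}_c$, $\leftidx{_c}{L}^{\$}$, $\leftidx{_c^{\$}}{L}$ for every $c \in A \cup \{\varepsilon\}$; in each case the rewriting $u \to v$ propagates in a simple periodic way (for instance, the rule $aab \to aba$ makes a single $b$ slide right through an $a$-block), so each transition language admits an explicit regular expression.

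When $u \in \{xyx, xxx\}$, I instead run Shirshov's algorithm on $\{u=v\}$. The first intersection composition occurs at $w = xyxyx$ (or $w = xxxx$ when $u = xxx$) and yields a new relation of the form $xyv = vyx$ (respectively $xv = vx$); subsequent compositions remain homogeneous and bounded in degree, so the algorithm terminates with a finite GSB $R$. I would then verify the hypotheses of Theorem \ref{3.1.1}(ii) for $R$, falling back to a direct construction (as in the preceding paragraph) in the few sub-cases where those hypotheses still fail. The main obstacle throughout is this ad hoc verification for the four residual pairs and the $xyx$-form cases: one must pin down the normal form set and describe the rewriting's pumping behaviour explicitly. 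Homogeneity keeps all length changes bounded by $1$, so Propositions \ref{qua} and \ref{sam} together with Lemma \ref{qup} assemble the required regular languages once the closed-form descriptions are in hand.
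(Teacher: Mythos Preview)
Your overall strategy---reduce to Theorem \ref{3.1.3} whenever possible and handle a short residual list directly---is sound, and your identification of the four residual pairs when $\{u=v\}$ is already a Gr\"obner--Shirshov basis is correct. The genuine gap is in your treatment of the $xyx$-case, specifically the claim that ``subsequent compositions remain homogeneous and bounded in degree, so the algorithm terminates with a finite GSB $R$.'' This is false for $u=aba$, $v=bab$. The first composition at $w=ababa$ gives $ab\cdot bab - bab\cdot ba$, i.e.\ the new relation $abbab=babba$; but then $abbab$ overlaps $aba$ on the right (overlap $ab$), and the composition at $w=abbaba$ produces $ab^{3}ab=babba^{2}$. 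Iterating, one obtains the infinite family $ab^{i+1}ab=babba^{i}$ for all $i\geq1$, which is exactly the Gr\"obner--Shirshov basis the paper records. So Shirshov's algorithm does not terminate here, and Theorem \ref{3.1.1}(ii)---which is stated for finitely many relations---is unavailable.

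The paper's route in this sub-case is to work with the infinite GSB directly: the set of leading words $\{aba\}\cup\{ab\}\{b\}^{+}\{ab\}$ is regular, so the normal-form language $L=A^{+}-A^{*}\{aba,\ ab\{b\}^{+}ab\}A^{*}$ is regular, and the transition languages are then written out explicitly. The left-multiplication language $\leftidx{_a^\$}{L}$ is the delicate one: because the rewriting $ab^{i+1}ab\to babba^{i}$ moves an $a$-block to the right in a length-dependent way, the description of $\leftidx{_a^\$}{L}$ in the paper requires a dozen separate regular pieces, far beyond the ``single $b$ slides through an $a$-block'' pumping you anticipate for the four easy residual pairs. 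Your proposal does not account for this, and the fallback you mention (``a direct construction \dots\ in the few sub-cases where those hypotheses still fail'') would have to confront it; that is where the real work of this lemma lies.
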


\begin{proof}
Let $R=\{u=v\},\ u=abc,\ v=xyz,\ a,b,c,x,y,z\in A$ and $\widetilde{S}=sgp^+\langle A| R\rangle$.

$(i)$ Suppose $a,b,c$ are pairwise different. Then $R$ is a Gr\"{o}bner-Shirshov basis in $\widetilde{S}$. Since $|con(abc)|=|u|$, by Theorem \ref{3.1.3}, $S$ is biautomatic and prefix-automatic.

$(ii)$ Suppose $R=\{aac=xyz\}$, where $a\not\equiv c$. Then $R$ is a Gr\"{o}bner-Shirshov basis in $\widetilde{S}$ and
$L=A^{+}-A^{*}\{aac\}A^{*}$
is a normal form of $\widetilde{S}$. Clearly $L$ is regular. Note that
\begin{eqnarray*}
L_=^{\$}&=&\Delta_L=\leftidx{^{\$}}L_=,\\
L_{a_j}^{\$}&=&\{(\alpha,\alpha a_j)\delta_A^{R}|\alpha\in L\}=\Delta_{L}\cdot\{(\$,a_j)\}\ (a_j\not\equiv c),\\
\leftidx{^{\$}_{a_j}}L&=&\{(\alpha,a_j\alpha)\delta_A^{L}|\alpha\in L\}=\{(\$,a_j)\}\cdot\Delta_{L}\ (a_j\not\equiv a)
\end{eqnarray*}
are regular.
Now we prove that $L_c^{\$}$ and $^{\$}_{a}L$ are regular.

Case 1. $x\not\equiv c,\ xy\not\equiv ac$. Then
$$
L_c^{\$}=\{(\alpha,\alpha c)\delta_A^{R}|\alpha\in L-A^{*}\{aa\}\}\cup\{(\alpha aa,\alpha xyz)\delta_A^{R}|\alpha aa\in L\}
$$
is regular,
$$
^{\$}_aL=\{(\alpha,a\alpha)\delta_A^{L}|\alpha\in L-\{ac\}A^{*}\}\cup\{((ac)^{i}\alpha,(xy)^{i}z\alpha)\delta_A^{L}|\alpha \in L-\{ac\}A^{*}\}
$$
 is regular if $z\equiv a$, and
$$
^{\$}_aL=\{(\alpha,a\alpha)\delta_A^{L}|\alpha\in L-\{ac\}A^{*}\}\cup\{(ac\alpha,xyz\alpha)\delta_A^{L}|\alpha \in L\}
$$
 is regular if $z\not\equiv a$.

Case 2. $x\not\equiv c,\ xy\equiv ac$. Then
$$
L_c^{\$}=\{(\alpha,\alpha c)\delta_A^{R}|\alpha \in L-A^{*}\{aa\}\}\cup\{(\alpha a^{i},\alpha xyz^{i-1})\delta_A^{R}|\alpha \in L-A^{*}\{a\},i\geq2\}
$$
 is regular,
$$
^{\$}_aL=\{(\alpha,a\alpha)\delta_A^{L}|\alpha \in L-\{ac\}A^{*}\}\cup\{((ac)^{i}\alpha,(xy)^{i}z\alpha)\delta_A^{L}|\alpha\in L-\{ac\}A^{*},i\geq1\}
$$
 is regular if $z\equiv a$, and
$$
^{\$}_aL=\{(\alpha,a\alpha)\delta_A^{L}|\alpha \in L-\{ac\}A^{*}\}\cup\{(ac\alpha,xyz\alpha)\delta_A^{L}|\alpha \in L\}
$$
 is regular if $z\not\equiv a$.

Case 3. $x\equiv c$. Then
$$
L_c^{\$}=\{(\alpha,\alpha c)\delta_A^{R}|\alpha\in L-A^{*}\{aa\}\}\cup\{(\alpha (aa)^{i},\alpha x(yz)^{i})\delta_A^{R}|\alpha\in L-A^{*}\{aa\},i\geq1\}
$$
 is regular,
$$
^{\$}_aL=\{(\alpha,a\alpha)\delta_A^{L}|\alpha\in L-\{ac\}A^{*}\}\cup\{((ac)^{i}\alpha,(xy)^{i}z\alpha)\delta_A^{L}|\alpha\in L-\{ac\}A^{*},i\geq1\}
$$
 is regular if $z\equiv a$, and
$$
^{\$}_aL=\{(\alpha,a\alpha)\delta_A^{L}|\alpha\in L-\{ac\}A^{*}\}\cup\{(ac\alpha,xyz\alpha)\delta_A^{L}|\alpha\in L\}
$$
 is regular if $z\not\equiv a$.

Therefore, $L_{a_j}^{\$}$ and $^{\$}_{a_j}L$ are regular for each $j\in\{1,2,\dots,n\}$.

For any $(\alpha,\beta)\delta_A^{R}\in L_{a_{j}}^{\$}\  or\ (\alpha,\beta)\delta_A^{L} \in$ $^{\$}_{a_{j}}L \ (j=1,2,\dots,n)$, since $||\alpha|-|\beta||\leq 1$, by Proposition \ref{sam}, we have $^{\$}L_{a_{j}}\ (or\ _{a_{j}}L^{\$})$ is regular, $j=1,2,\dots,n$. Hence, $\widetilde{S}$ is biautomatic and prefix-automatic.

$(iii)$ Suppose $R=\{abb=xyz\}$, where $a\not\equiv b$. Then $R$ is a Gr\"{o}bner-Shirshov basis in $\widetilde{S}$ and
$L=A^{+}-A^{*}\{abb\}A^{*}$
is a normal form of $\widetilde{S}$. Clearly, $L$,
\begin{eqnarray*}
L_=^{\$}&=&\Delta_L=\leftidx{^{\$}}L_=,\\
L_{a_j}^{\$}&=&\{(\alpha,\alpha a_j)\delta_A^{R}|\alpha\in L\}\ (a_j\not\equiv b),\\
^{\$}_{a_j}L&=&\{(\alpha,a_j\alpha)\delta_A^{L}|\alpha\in L\}\ (a_j\not\equiv a)
\end{eqnarray*}
are all regular. Now we prove that $L_b^{\$}$ and $^{\$}_{a}L$ are regular.

Case 1. $x\not\equiv b$. Then
$$
L_b^{\$}=\{(\alpha,\alpha b)\delta_A^{R}|\alpha\in L-A^{*}\{ab\}\}\cup\{(\alpha ab,\alpha xyz)\delta_A^{R}|\alpha \in L\}
$$
 is regular,
$$
^{\$}_aL=\{(\alpha,a\alpha)\delta_A^{L}|\alpha\in L-\{bb\}A^{*}\}\cup\{((bb)^{i}\alpha,(xy)^{i}z\alpha)\delta_A^{L}|\alpha \in L-\{bb\}A^{*}\}
$$
 is regular if $z\equiv a$,
$$
^{\$}_aL=\{(\alpha,a\alpha)\delta_A^{L}|\alpha\in L-\{bb\}A^{*}\}\cup\{(bb\alpha,xyz\alpha)\delta_A^{L}|\alpha\in L\}
$$
 is regular if $z\not\equiv a\ and\ z\not\equiv b$ (or $z\equiv b\ and\ y\not\equiv a$), and
$$
^{\$}_aL=\{(\alpha,a\alpha)\delta_A^{L}|\alpha\in L-\{bb\}A^{*}\}\cup\{(b^{i+2}\alpha,x^{i+1}yz\alpha)\delta_A^{L}|\alpha\in L-\{b\}A^*,i\geq0\}
$$
 is regular if $z\equiv b\ and\ y\equiv a$.

Case 2. $x\equiv b$ and $xy\not\equiv bb$. Then
$$
L_b^{\$}=\{(\alpha,\alpha b)\delta_A^{R}|\alpha \in L-A^{*}\{ab\}\}\cup\{(\alpha (ab)^{i},\alpha x(yz)^{i})\delta_A^{R}|\alpha \in L-A^{*}\{ab\},i\geq1\}
$$
is regular,
$$
^{\$}_aL=\{(\alpha,a\alpha)\delta_A^{L}|\alpha \in L-\{bb\}A^{*}\}\cup\{((bb)^{i}\alpha,(xy)^{i}z\alpha)\delta_A^{L}|\alpha\in L-\{bb\}A^{*},i\geq1\}
$$
 is regular if $z\equiv a$,
$$
^{\$}_aL=\{(\alpha,a\alpha)\delta_A^{L}|\alpha \in L-\{bb\}A^{*}\}\cup\{(bb\alpha,xyz\alpha)\delta_A^{L}|\alpha \in L\}$$
 is regular if $z\not\equiv a\ and\ z\not\equiv b$ (or $z\equiv b\ and\ y\not\equiv a$), and
$$
^{\$}_aL=\{(\alpha,a\alpha)\delta_A^{L}|\alpha \in L-\{bb\}A^{*}\}\cup\{(b^{i+2}\alpha,x^{i+1}yz\alpha)\delta_A^{L}|\alpha \in L-\{b\}A^*,i\geq 0\}
$$
 is regular if  $z\equiv b\ and\ y\equiv a$.

Case 3. $x\equiv b$ and $xy\equiv bb$. Then
\begin{eqnarray*}
 L_b^{\$}&=&\{(\alpha,\alpha b)\delta_A^{R}|\alpha\in L-A^{*}\{ab\}\}\\
 &&\cup\{(\alpha a^{i_1}(ab)^{i_2}\cdots a^{i_{n-2}}(ab)^{i_{n-1}}a^{i_n}(ab),\alpha xyz^{i_1}(zy)^{i_2}\cdots z^{i_{n-2}}(zy)^{i_{n-1}}z^{i_n+1})\delta_A^{R} | \\
 &&\ \ \ \ \alpha\in L-A^{*}\{ab,a\},i_1,i_2,\dots,i_n\geq0,n\in \mathbb{N}\}
 \end{eqnarray*}
 is regular,
$$
^{\$}_aL=\{(\alpha,a\alpha)\delta_A^{L}|\alpha\in L-\{bb\}A^{*}\}\cup\{((bb)^{i}\alpha,(xy)^{i}z\alpha)\delta_A^{L}|\alpha\in L-\{bb\}A^{*},i\geq1\}
$$
 is regular if $z\equiv a$, and
$$
^{\$}_aL=\{(\alpha,a\alpha)\delta_A^{L}|\alpha\in L-\{bb\}A^{*}\}\cup\{(bb\alpha,xyz\alpha)\delta_A^{L}|\alpha \in L\}
$$
 is regular if $z\not\equiv a$.

Therefore, $L_{a_j}^{\$}$ and $^{\$}_{a_j}L$ are regular for each $j\in\{1,2,\dots,n\}$.

 Hence, $\widetilde{S}$ is biautomatic and prefix-automatic.

$(iv)$ Suppose $R=\{aba=aya\}$, where $a\not\equiv b,y\not\equiv b$. Then $R$ is a Gr\"{o}bner-Shirshov basis in $S$. Since $con(aba)\nsubseteq con(aya)$, by Theorem \ref{3.1.3}, $S$ is biautomatic and prefix-automatic.

$(v)$ Suppose $R=\{aba=xyx\}$, where $a\not\equiv b,x\not\equiv a$.

Case 1. $x\not\equiv b$ or $y\not\equiv a$. Then $\{aba=xyx,abxyx=xyxba\}$ is a Gr\"{o}bner-Shirshov basis in $\widetilde{S}$ and
$L=A^{+}-A^{*}\{aba,abxyx\}A^{*}$
is a normal form of $\widetilde{S}$. Clearly, $L$,
\begin{eqnarray*}
L_=^{\$}&=&\Delta_L=\leftidx{^{\$}}L_=,\\
L_{a_j}^{\$}&=&\{(\alpha,\alpha a_j)\delta_A^{R}|\alpha\in L\}\ (a_j\not\equiv a\ and\ a_j\not\equiv x),\\
^{\$}_{a_j}L&=&\{(\alpha,a_j\alpha)\delta_A^{L}|\alpha\in L\}\ (a_j\not\equiv a),\\
L_a^{\$}&=&\{(\alpha,\alpha a)\delta_A^{R}|\alpha\in L-A^{*}\{ab\}\}\\
&&\cup\{(\alpha(abxy)^{i}ab,\alpha xyx(bayx)^{i})\delta_A^{R}|\alpha\in L-A^{*}\{ab,abxy\},i\geq0\},\\
L_x^{\$}&=&\{(\alpha,\alpha x)\delta_A^{R}|\alpha\in L-A^{*}\{abxy\}\}\\
&&\cup\{(\alpha(abxy)^{i},\alpha x(yxba)^{i})\delta_A^{R}|\alpha\in L-A^{*}\{ab,abxy\},i\geq1\},\\
^{\$}_aL&=&\{((bxyx)^{i}ba\alpha,(xyxb)^{i}xyx\alpha)\delta_A^{L}|\alpha\in L-\{ba,bxyx\}A^{*},i\geq0\}\\
&&\cup\{((bxyx)^{i}\alpha,(xyxb)^{i}a\alpha)\delta_A^{L}|\alpha\in L-\{ba,bxyx\}A^{*},i\geq0\}
\end{eqnarray*}
are all regular.

Therefore, $L_{a_j}^{\$}$ and $^{\$}_{a_j}L$ are regular for each $j\in\{1,2,\dots,n\}$.

 Hence, $\widetilde{S}$   is biautomatic and  prefix-automatic.

Case 2. $xy\equiv ba$. Then $\{aba=bab,ab^{i+1}ab=babba^{i}|i\geq1\}$ is a Gr\"{o}bner-Shirshov basis in $\widetilde{S}$ and
$
L=A^{+}-A^{*}\{aba,\ \{ab\}\{b\}^{+}\{ab\}\}A^{*}
$
is a normal form of $\widetilde{S}$. Clearly, $L$,
\begin{eqnarray*}
L_=^{\$}&=&\Delta_L=\leftidx{^{\$}}L_=,\\
L_{a_j}^{\$}&=&\{(\alpha,\alpha a_j)\delta_A^{R}|\alpha\in L\}\ (a_j\not\equiv a\ and\ a_j\not\equiv b),\\
^{\$}_{a_j}L&=&\{(\alpha,a_j\alpha)\delta_A^{L}|\alpha\in L\}\ (a_j\not\equiv a),\\
L_a^{\$}&=&\{(\alpha a^{i_1}(ab^{j_1+1}a)a^{i_2}(ab^{j_2+1}a)\cdots a^{i_m}(ab^{j_m+1}a)a^{i_{m+1}}ab,\ \alpha bab^{i_1}(bba^{j_1+1})b^{i_2}\\
&&(bba^{j_2+1})\cdots b^{i_m}(bba^{j_m+1})b^{i_{m+1}+1})\delta_A^{R}\ |\ \alpha\in L-A^{*}\{a,ab\},\\
&&m\geq0,i_1,\dots,i_{m+1}\geq0,j_1,\dots,j_{m}\geq1\}\\
&&\cup\{(\alpha,\alpha a)\delta_A^{R}|\alpha\in L-A^{*}\{ab\}\},\\
L_b^{\$}&=&\{(\alpha a^{i_1}(ab^{j_1+1}a)a^{i_2}(ab^{j_2+1}a)\cdots a^{i_m}(ab^{j_m+1}a)a^{i_{m+1}}ab^{k+1}a,\ \alpha bab^{i_1}(bba^{j_1+1})\\
&&b^{i_2}(bba^{j_2+1})\cdots b^{i_m}(bba^{j_m+1})b^{i_{m+1}}bba^{k})\delta_A^{R}\ |\ \alpha\in L-A^{*}\{a,ab\},\\
&&  m\geq0,i_1,\dots,i_{m+1}\geq0, j_1,\dots,j_{m}\geq1\}\\
&&\cup\{(\alpha,\alpha b)\delta_A^{R}|\alpha\in L-A^{*}\{ab\}\{b\}^+\{a\}\},\\
\leftidx{_a^\$}{L}&=&\{(\alpha,a\alpha)\delta_A^L|\alpha\in L-\{ba\}A^*-\{b\}\{b\}^+\{ab\}A^*\}\\
&&\cup\{(ba^i\alpha,b^iab\alpha)\delta_A^L|\alpha\in L-\{ba,a\}A^*-\{b\}\{b\}^+\{ab\}A^*,i\geq1\}\\
&&\cup\{(b^{i+1}ab\alpha,babba^i\alpha)\delta_A^L|\alpha\in L-\{ba,a\}A^*-\{b\}^+\{ab\}A^*,i\geq2\}\\
&&\cup\{(b^2ab\alpha,babba\alpha)\delta_A^L|i=1,\alpha\in L-\{a,b\}A^*\}\\
&&\cup\{(b^2ab^n\alpha,b^{n}abba\alpha)\delta_A^L|i=1,\alpha\in L-\{b,ab\}A^*,n\geq 2\}\\
&&\cup\{(b^{i+1}ab^2a^j\alpha,b^2ab^2a^2b^{i-1}a^{j-1}\alpha)\delta_A^L|i\geq3\ or\ i=1,j\geq3,\alpha\in L-\{ba,a\}A^*\\
&&\ \ \ -\{b\}\{b\}^+\{ab\}A^*\}\\
&&\cup\{(b^3ab^2a^j\alpha,b^{j+1}ab^2a^2b\alpha)\delta_A^L|(i=2)j\geq3,\alpha\in L-\{ba,a\}A^*-\{b\}\{b\}^+\{ab\}A^*\}\\
&&\cup\{(b^{i+1}ab^2a\alpha,b^2ab^2a^2b^{i-1}\alpha)\delta_A^L|(j=1)i\geq1,\alpha\in L-\{ba,a\}A^*-\{b\}\{b\}^+\{ab\}A^*\}\\
&&\cup\{(b^3ab^2a^2\alpha,b^3ab^2a^2b\alpha)\delta_A^L|(j=2,i=2)\alpha\in L-\{ba,a\}A^*-\{b\}\{b\}^+\{ab\}A^*\}\\
&&\cup\{(b^2ab^2a^2\alpha,b^2ab^2a^3\alpha)\delta_A^L|(j=2,i=1)\alpha\in L-\{ba,a\}A^*-\{b\}\{b\}^+\{ab\}A^*\}\\
&&\cup\{(b^{i+1}ab^2a^2\alpha,b^2ab^2a^2b^{i-1}a\alpha)\delta_A^L|(j=2)i\geq3,\alpha\in L-\{a,b\}A^*\}\\
&&\cup\{(b^{i+1}ab^2a^2b\alpha,b^3ab^2a^2b^2a^{i-2}\alpha)\delta_A^L|(j=2)i\geq 4,\alpha\in L-\{a\}A^*-\{b\}^+\{ab\}A^*\}\\
&&\cup\{(b^4ab^2a^2b^n\alpha,b^{n+2}ab^2a^2b^2a\alpha)\delta_A^L|(j=2,i=3)\alpha\in L-\{b,ab\}A^*,n\geq1\}
\end{eqnarray*}
are all regular.

Therefore, $L_{a_j}^{\$}$ and $^{\$}_{a_j}L$ are regular for each $j\in\{1,2,\dots,n\}$.

Hence, $\widetilde{S}$   is biautomatic and  prefix-automatic.

$(vi)$ Suppose $R=\{a^{3}=b^{3}\}$, where $a\not\equiv b$. Then
by Proposition \ref{fre}, we just need to prove $\widetilde{S}=sgp^{+}\langle a,b|a^{3}=b^{3}\rangle$ is biautomatic   and prefix-automatic.

Since $\{a^{3}=b^{3},ab^{3}=b^{3}a\}$ is a Gr\"{o}bner-Shirshov basis in $\widetilde{S}$,
$L=A^{+}-A^{*}\{a^{3},ab^{3}\}A^{*}$
is a normal form of $\widetilde{S}$. Clearly, $L$,
\begin{eqnarray*}
L_=^{\$}&=&\Delta_L=\leftidx{^{\$}}L_=,\\
L_a^{\$}&=&\{(\alpha, \alpha a)\delta_A^{R}|\alpha\in L-A^{*}\{a^{2}\}\}\cup\{(\alpha aa,\alpha b^{3})\delta_A^{R}|\alpha\in L-A^{*}\{a\}\},\\
L_b^{\$}&=&\{(\alpha,\alpha b)\delta_A^{R}|\alpha\in L-A^{*}\{ab^{2}\}\}\cup\{(\alpha abb, \alpha bbba)|\alpha\in L-A^{*}\{aa\}\},\\
^{\$}_aL&=&\{(\alpha,a\alpha)\delta_A^{L}|\alpha\in L-\{a^{2},b^{3}\}A^{*}\}\cup\{(aa\alpha,bbb\alpha)\delta_A^{L}|\alpha\in L-\{a,b^3\}A^{*}\}\\
&&\cup\{((b^{3})^{i}\alpha,(b^{3})^{i}a\alpha)\delta_A^{L}|\alpha\in L-\{aa,bbb\}A^{*},i\geq1\}\\
&&\cup\{((b^{3})^{i}a^{2}\alpha,(b^{3})^{i+1}\alpha)\delta_A^{L}|\alpha\in L-\{a,b^{3}\}A^*,i\geq1\},\\
^{\$}_bL&=&\{(\alpha,b\alpha)\delta_A^{L}|\alpha\in L\}
\end{eqnarray*}
are regular. Hence, $\widetilde{S}$   is biautomatic and  prefix-automatic.

Therefore, $S=sgp\langle a_1,a_2,\dots,a_n|abc=xyz\rangle$ is biautomatic and  prefix-automatic.
\end{proof}

\begin{lemma}\label{3.2.4}
Suppose $S=sgp^{+}\langle a, b|aba=ba\rangle$. Then  $S$ is not automatic.
\end{lemma}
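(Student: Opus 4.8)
The plan is to argue by contradiction, along the lines of Theorems \ref{3.1.8} and \ref{3.1.9}. Assume $S$ is automatic. By Proposition \ref{111} the monoid $S^1$ is automatic, and by Propositions \ref{cha} and \ref{uni} there is an automatic structure $(B,K)$ with uniqueness for $S^1$, where $B=\{e,a,b\}$ and $e$ is the identity. I would first record two facts forced by the relation $aba=ba$. Both sides contain exactly one $b$, so $occ(b,\cdot)$ is a well-defined invariant on $S^1$ taking the same value on every word representing a given element. Moreover $a$ is absorbed on the left of anything beginning with $ba$: from $a\cdot ba=ba$ one gets $a^{c}w=w$ for all $c\ge0$ whenever the word $w$ begins with the factor $ba$. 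Finally, iterating $aba\to ba$ gives the collapse $a^{j}b\cdot a=ba$ for every $j\ge1$, so right multiplication by $a$ sends all of the distinct elements $a^{j}b$ to the single element $ba$.

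Next I would exploit this infinite fibre. Let $\alpha_j\in K$ be the representative of $a^{j}b$ and $\beta\in K$ the (now fixed) representative of $ba$; regularity of $\leftidx{^\$}{K}_a$ gives $(\alpha_j,\beta)\delta_B^L\in\leftidx{^\$}{K}_a$ for all $j$. Since $occ(b,a^{j}b)=1$ is independent of $j$ while $occ(a,a^{j}b)=j$ cannot be lowered (reductions only remove letters $a$), each $\alpha_j$ has exactly one $b$ and at least $j$ letters $a$, so $|\alpha_j|\to\infty$ while $\beta$ is fixed. Feeding a long such pair into $M(\leftidx{^\$}{K}_a)$ and pumping in the region where the second coordinate has been exhausted by $\$$ yields a loop reading $(u_1,\$)$ with $u_1$ a factor of $\alpha_j$; because the pumped word keeps the same target $\beta$, the invariant forces $occ(b,u_1)=0$, i.e. $con(u_1)\subseteq\{e,a\}$.

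The crux, and the step I expect to be the main obstacle, is converting this loop into a failure of uniqueness. The naive move fails: pumping $u_1$ inside the leading $a$-block of $\alpha_j$ merely replaces $a^{j}b$ by another genuine element $a^{j'}b$ of the same fibre, producing a legitimate distinct representative and no contradiction (the parameter $j$ is ``free'' and aligned with the pumping). To defeat this I would arrange the loop so that the pumped copies of $u_1\in\{e,a\}^{*}$ are inserted immediately to the left of a $ba$-factor of a \emph{fixed} representative, where absorption $a^{c}(ba\cdots)=ba\cdots$ makes them vanish in $S^1$; the resulting longer word then represents the same element while differing from it as a word, contradicting uniqueness of $K$. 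Concretely I would pass to a family realising a collapse $b^{i}a^{j}b^{k}\cdot a=b^{i+k}a$ in which the trailing block $b^{k}$ must be matched, across the unboundedly long middle block $a^{j}$, against the $b$'s of the target $b^{i+k}a$; this matching is precisely what a finite automaton cannot carry out, and a two-parameter pumping (varying $k$ with $j$ huge, using $occ(b,\cdot)$ to control both coordinates) should force either a miscount of $b$'s in an accepted pair or two distinct words of $K$ with the same image.

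To keep the bookkeeping honest I would also isolate why the difficulty is genuinely one of \emph{regularity} rather than of length. Left multiplication by $a$ or $b$ and right multiplication by $b$ are each injective with target determined by source, so pumping the longer coordinate of a pair against the fixed element and invoking uniqueness gives $||\alpha|-|\beta||\le\widetilde N$ in those three directions (then Proposition \ref{sam} transfers between $\delta_B^L$ and $\delta_B^R$, exactly as in Theorem \ref{3.1.9}). This shows right multiplication by $a$ is the only direction with unbounded length difference, and since the definition of automaticity \emph{permits} unbounded difference there, no growth estimate can succeed; the contradiction must come from the matching/pumping argument above.
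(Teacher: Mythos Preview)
Your two-parameter family $b^{i}a^{j}b^{k}\cdot a=b^{i+k}a$ (or, more simply, the paper's $a^{j}b^{i}\cdot a=b^{i}a$) is the right idea, and your dichotomy ``either the $b$-count is broken or uniqueness fails'' is exactly the endgame. But the proposal has a genuine gap in how the pumping is set up, and it stems from a mix-up that is more than notational.

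You work in $\leftidx{^\$}{K}_a$ and pump ``in the region where the second coordinate has been exhausted by $\$$''. Two problems. First, automaticity gives only $K_a^{\$}$ (right padding), and you yourself note that Proposition~\ref{sam} cannot transfer here because the length gap is unbounded; so the argument must live in $K_a^{\$}$. Second, and more importantly, pumping where the short coordinate is already padded out means the loop has the form $(u,\$^{|u|})$ and touches only $\alpha$. Then your dichotomy collapses: you cannot get a $b$-miscount (nothing in $\beta$ changes), and if $u\in\{e,a\}^{*}$ contains an $a$ the pumped word legitimately represents a new $b^{i}a^{j'}b^{k}$ with the same product $b^{i+k}a$, so neither horn yields a contradiction. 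Your instinct to ``arrange'' the loop next to a $ba$-factor of $\beta$ is exactly what is needed, but it cannot be realised from a $(u,\$)$ loop.

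The paper's fix is to pump where \emph{both} coordinates are live. With $\alpha=a^{j}b^{i}$ one first shows (by pumping inside $M(K)$ and using absorption $a^{c}b^{m}a=b^{m}a$) that each $e$-run in the $K$-representative of $a^{j}b^{i}$ and each $\{e,a\}$-run between consecutive $b$'s in the $K$-representative of $b^{i}a$ has length $<N=|S(M(K))|$; hence the $b$-prefix $\beta_{1}$ of $\beta$ has $|\beta_{1}|<iN+i$, while the $a$-prefix $\alpha_{1}$ of $\alpha$ has $|\alpha_{1}|\ge j$. Choosing $j>iN+i$ and $i>|S(M(K_a^{\$}))|$ forces a loop $(u,v)$ within the first $|\beta_{1}|$ steps of the accepting run of $(\alpha,\beta)\delta_B^{R}$, so $u$ is a factor of $\alpha_{1}$ (hence $b\notin con(u)$) and $v$ is a genuine factor of $\beta_{1}$. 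Now your dichotomy works cleanly: if $b\in con(v)$, deleting the loop yields an accepted pair with mismatched $occ(b,\cdot)$; if $b\notin con(v)$ then $v\in\{e,a\}^{*}$ sits before the terminal $a$ of $\beta$, so deleting it does not change the element, giving two $K$-representatives of $b^{i}a$ and contradicting uniqueness. The ingredient missing from your plan is precisely this length calculus that places the loop in the doubly-live region.
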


\begin{proof}
Since $\{ab^{i}a=b^{i}a|i\geq1\}$ is a Gr\"{o}bner-Shirshov basis in $S$,  we have $L=A^{+}-A^{*}\{a\}\{b\}^{+}\{a\}A^{*}$ is a normal form of $S$, where $A=\{a,b\}$.

Suppose $S$ is automatic. Then, by Proposition \ref{111}, $S^{1}$ is automatic.

Denote $B=\{e,a,b\}$. Then there exists an automatic structure $(B,K)$ for $S^{1}$ with uniqueness.

For any $ i,j\geq1$, there exist $\alpha,\beta\in K$, such that $\alpha=a^{j}b^{i},\beta=b^{i}a$. Since $a^{j}b^{i}\cdot a=b^{i}a$, we have $(\alpha,\beta)\delta_B^{R}\in K_a^{\$}$.

Since $\alpha,\beta\in K$, we have
\begin{eqnarray*}
\alpha&\equiv& \gamma_1a\gamma_2a\cdots\gamma_ja\gamma_1'b\gamma_2'b\cdots\gamma_i'b\gamma_{i+1}',\\
\beta&\equiv&\tau_1b\tau_2b\cdots\tau_ib\tau_{i+1}a\tau_{i+2},
\end{eqnarray*}
where $\gamma_1,\dots,\gamma_j,\gamma_1',\dots,\gamma_{i+1}'\in\{e\}^{*}$, $\tau_1,\dots,\tau_i\in\{e,a\}^{*}$ and $\tau_{i+1},\tau_{i+2}\in\{e\}^{*}$.

Let $N=|S(M(K))|$, $\alpha_1\equiv \gamma_1a\gamma_2a\cdots\gamma_ja$ and $\beta_1\equiv\tau_1b\tau_2b\cdots\tau_ib$. Then $j\leq|\alpha_1|<jN+j$ and $i\leq|\beta_1|<iN+i$.

Choose $j>iN+i,\ i>|S(M(K_a^{\$}))|$. Then there exists a loop $(u,v)\delta_B^{R}$ in $(\alpha_1,\beta_1)\delta_B^{R}$.
Suppose $\alpha\equiv\alpha'u\alpha^{''},\beta\equiv\beta'v\beta^{''}$.

If $b\not\in con(v)$, then $con(v)\in\{e,a\}$. By the relations in $S$, $\beta'\beta^{''}=\beta$ and $\beta'\beta^{''},\beta\in K$ which contradicts the uniqueness of $K$.

If $b\in con(v)$, we have $occ(b,\alpha'\alpha''a)\neq occ(b,\beta'\beta'')$ since $b\not\in con(u)$. But $(\alpha'\alpha^{''},\beta'\beta'')\delta_B^{R}\in K_a^{\$}$,  a contradiction.

Therefor, $S^{1}$ is not automatic.
\end{proof}

\begin{lemma}\label{3.2.5}
Let $S=sgp\langle A|u=v\rangle$, where $A=\{a_1,a_2,\dots,a_n\}$ $(n\in \mathbb{N})$, $u,v\in A^*$ and $|v|\leq|u|=3$. Then
\begin{enumerate}
\item[(i)] if $u=v \in\{aba=ba,aab=ba,abb=bb| a,b\in A,a\not\equiv b\}$, then $S$ is not automatic;

\item[(ii)] if $u=v \not\in\{aba=ba,aab=ba,abb=bb| a,b\in A,a\not\equiv b\}$, then $S$ is  prefix-automatic;

\item[(iii)] if $|v|=0$ or 3, then S is biautomatic.

\end{enumerate}
\end{lemma}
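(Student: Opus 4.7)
I would prove the lemma by case analysis on $|v|$ together with the combinatorial form of the relation $u=v$. Part (iii) is immediate: $|v|=0$ is Theorem~\ref{3.1.4}, and $|v|=3$ is Lemma~\ref{3.2.3}. In part (ii), for $|v|\in\{0,3\}$ prefix-automaticity follows from (iii) combined with the fact that the Gr\"obner--Shirshov normal form of such a relation is closed under prefixes; the case $|v|=1$ is Theorem~\ref{3.1.5}. For $|v|=2$, I would first verify via the self-overlap test that $\{u=v\}$ is a Gr\"obner--Shirshov basis, and then apply Theorem~\ref{3.1.6} whenever its hypotheses $a_{i_{k-1}}a_{i_k}\not\equiv xy$ and $u\not\equiv yyx$ both hold. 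The finitely many remaining configurations arise from the letter identifications that make at least one hypothesis fail; three of them yield exactly the excluded exceptional relations $aba=ba$, $abb=bb$, $aab=ba$, while the remaining configurations (for instance $abc=bc$ with pairwise distinct letters, $aab=ab$, and $aaa=aa$) are shown prefix-automatic by direct $gsm$-constructions patterned on the proof of Theorem~\ref{3.1.6}.

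\noindent For part (i) I must prove that each exceptional relation yields a non-automatic semigroup. The relation $aba=ba$ is Lemma~\ref{3.2.4}, and $abb=bb$ is the $k=1,l=2$ instance of Theorem~\ref{3.1.7}. The new case is $aab=ba$. I would argue by contradiction: if $S$ is automatic, then so is $S^1$ (Proposition~\ref{111}), and by Propositions~\ref{cha} and~\ref{uni} there is an automatic structure $(B,K)$ for $S^1$ with uniqueness, where $B=\{e,a,b\}$. Set $N'=\max_{x\in B\cup\{\varepsilon\}}|S(M(K_x^\$))|$. A standard pumping argument on the $\$$-padded tail of $(\alpha,\beta)\delta_B^R$ then produces the following length estimate: for each $x\in\{a,b\}$ and every $(\alpha,\beta)\in K_x^\$$ for which $\phi(\beta)$ has only finitely many left-$x$-divisors in $S^1$, $||\alpha|-|\beta||\le N'$. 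Otherwise a loop in the padded tail could be pumped to produce infinitely many distinct $K$-words each representing a left-$x$-divisor of $\phi(\beta)$, contradicting $K$-uniqueness.

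\noindent A routine induction shows $a^{2m}\cdot b=ba^m$ in $S$, and hence for every $K\ge 1$ we have the right-multiplication chain
\[
a^{2^K}\xrightarrow{\cdot b} ba^{2^{K-1}}\xrightarrow{\cdot b} b^{2}a^{2^{K-2}}\xrightarrow{\cdot b}\cdots\xrightarrow{\cdot b} b^{K}a.
\]
Normal-form analysis inside $S$ shows that each intermediate element $b^{j+1}a^{n}$ has a unique left-$b$-divisor ($b^{j}a^{2n}$), that each $b^{j}$ has a unique left-$b$-divisor ($b^{j-1}$), and that $b^{K}a$ has a unique left-$a$-divisor ($b^{K}$). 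Hence the length estimate applies at every edge of the chain above and also along the auxiliary chain $e\to b\to b^{2}\to\cdots\to b^{K}\to b^{K}a$. Telescoping the auxiliary chain gives $|\alpha_K(b^{K}a)|\le|\alpha_K(e)|+(K+1)N'=O(K)$, where $\alpha_K(s)$ denotes the unique $K$-word for $s$; telescoping the main chain then yields $|\alpha_K(a^{2^K})|\le|\alpha_K(b^{K}a)|+KN'=O(K)$. On the other hand, since $a^{2^K}$ contains no $b$, no rewrite applies to any word representing it, so every such word contains exactly $2^K$ copies of $a$; thus $|\alpha_K(a^{2^K})|\ge 2^K$. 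For $K$ sufficiently large this contradicts the linear upper bound.

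\noindent\textbf{Main obstacle.} The principal technical hurdle is the $aab=ba$ sub-case of part (i): combining the pumping-based length estimate with a careful normal-form analysis to justify uniqueness of the appropriate left-divisors at every node of an exponentially deep chain, so that telescoping produces a linear upper bound contradicting the trivial exponential lower bound.
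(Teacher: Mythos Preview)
Your plan has a genuine gap: Theorems~\ref{3.1.4}, \ref{3.1.5} and \ref{3.1.6} all carry the hypothesis that the single relation $\{u=v\}$ is itself a Gr\"obner--Shirshov basis, and this fails exactly when $u$ has a nontrivial self-overlap, i.e., when $u\equiv aba$ (and, for $|v|\ge 1$, also when $u\equiv aaa$ with $v\not\equiv a^{|v|}$). Concretely, $\{aba=\varepsilon\}$ is \emph{not} a Gr\"obner--Shirshov basis (the overlap $ababa$ produces $ba=ab$), so Theorem~\ref{3.1.4} does not give part~(iii) for $u\equiv aba$; likewise $\{aba=x\}$ with $x\not\equiv a$ and $\{a^3=x\}$ with $x\not\equiv a$ are not Gr\"obner--Shirshov bases, so Theorem~\ref{3.1.5} does not cover these $|v|=1$ cases; and for $|v|=2$ the relations $aba=xy$ and $a^3=xy$ typically complete to two-element bases such as $\{aba=xy,\ abxy=xyba\}$ or $\{a^3=xy,\ axy=xya\}$, so Theorem~\ref{3.1.6} is inapplicable. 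Your list of ``remaining configurations'' ($abc=bc$, $aab=ab$, $aaa=aa$) consists only of relations whose singleton \emph{is} a Gr\"obner--Shirshov basis; it omits the entire $u\equiv aba$ family and most of the $u\equiv a^3$ family. The paper spends the bulk of the proof on precisely these: it computes the completed bases and then verifies regularity of every $L_{a_j}^\$$ by hand through roughly a dozen sub-cases (Cases 4-1 to 4-9 and 5-1 to 5-4 for $|v|=2$, plus the analogous sub-cases for $|v|\in\{0,1\}$).

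For part~(i), your telescoping argument for $aab=ba$ is correct in outline, but note that it is the $k=2$ instance of the proof of Theorem~\ref{3.1.9}, which the paper simply cites; you have essentially re-derived that theorem rather than found a new route.
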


\begin{proof}
$(i)$ If $u=v \in\{aba=ba,aab=ba,abb=bb| a,b\in A,\ a\not\equiv b\}$, then  by Proposition \ref{111}, Theorems \ref{3.1.7} and \ref{3.1.9}, and Lemma \ref{3.2.4}, $S=sgp\langle A|u=v\rangle$ is not automatic.

Now we prove $(ii)$ and $(iii)$. Suppose $u=v \not\in\{aba=ba,aab=ba,abb=bb| a,b\in A,a\not\equiv b\}$ and $\widetilde{S}=sgp^+\langle A|u=v\rangle$.

$1)$ Suppose $|v|=0$. If $u=v\in\{aaa=\varepsilon,\ aab=\varepsilon, abb=\varepsilon, abc=\varepsilon| a,b\in A,\ a\not\equiv b\}$, then by Theorem \ref{3.1.4}, $S$ is biautomatic and prefix-automatic.

Let $\widetilde{S}=sgp^+\langle e, a, b| aba=e,ea=ae=a,eb=eb=b,ee=e\rangle$. If $u=aba$, then $\{aba=e, aab=e, ba=ab, ae=a,ea=a, be=b,eb=b, ee=e\}$ is a Gr\"{o}bner-Shirshov basis in $\widetilde{S}$. Let $B=\{e, a, b\}$, $L=B^+-B^*\{aba, aab, ba, ae,$ $ea, be, eb, ee\}B^*$. Then L is a  normal form of $\widetilde{S}$. Then $L_==L_e=_{e}L=\Delta_L$,
\begin{eqnarray*}
L_a^{\$}&=&\{(e,a)\}\cup\{(\alpha, \alpha a)\delta_B^R|\alpha\in L-B^*\{b,e\}\}\\
&&\cup\{(\alpha ab, \alpha)\delta_B^R|\alpha ab\in L\}\cup\{(\alpha b, \alpha ab)\delta_B^R|\alpha\in L-B^*\{a\}\},\\
L_b^{\$}&=&\{(e,b)\}\cup\{(\alpha, \alpha b)\delta_B^R|\alpha\in L-B^*\{aa, e\}\}\\
&&\cup\{(\alpha aa, \alpha)\delta_B^R|\alpha aa\in L\}
\end{eqnarray*}
are regular and so $\widetilde{S}$ is prefix-automatic. Noting that
\begin{eqnarray*}
_a^{\$}L&=&\{(e,a)\}\cup\{(\alpha, a\alpha)\delta_B^L|\alpha\in L-\{ab,ba,e\}B^*\}\\
&&\cup\{(ab\alpha, \alpha)\delta_B^L|ab\alpha\in L\}\cup\{(ba\alpha, \alpha)\delta_B^L|ba\alpha\in L\},\\
_b^{\$}L&=&\{(e,b)\}\cup\{(\alpha, b\alpha)\delta_B^L|\alpha\in L-\{a,e\}B^*\}\\
&&\cup\{(a\alpha, ab\alpha)\delta_B^L| a\alpha\in L\ and\ \alpha\in B^*-\{a\}B^*\}\\
&&\cup\{(aa\alpha, \alpha)\delta_B^L|aa\alpha\in L\}
\end{eqnarray*}
are regular, by Proposition \ref{sam}, $^{\$}L_a,\ ^{\$}L_b,\ _aL^{\$},\ _bL^{\$}$ are regular and hence $\widetilde{S}$ is biautomatic. This shows that $S$ is biautomatic and prefix-automatic.

$2)$ Suppose $|v|=1$. If $u=v\in\{abc=x,aab=x,abb=x,aba=a,aaa=a|a,b,c,x\in A,a\not\equiv b,a\not\equiv c,b\not\equiv c\}$, then $\{u=v\}$ is a Gr\"{o}bner-Shirshov basis in $\widetilde{S}$. Then by Theorem \ref{3.1.5}, $S$ is prefix-automatic.

If $u\equiv aba,\ v\equiv x$, where $x\in A-\{a\}$, then $\{aba=x,abx=xba\}$ is a Gr\"{o}bner-Shirshov basis in $\widetilde{S}$ and so $L=A^{+}-A^{*}\{aba,abx\}A^{*}$ is a normal form of $\widetilde{S}$. Clearly, $L$,
\begin{eqnarray*}
L_=^{\$}&=&\Delta_L,\ \ \ 
L_{a_j}^{\$}=\{(\alpha,\alpha a_j)\delta_A^{R}|\alpha\in L\}\ (a_j\not\equiv a\ and\ a_j\not\equiv x),\\
L_{a}^{\$}&=&\{(\alpha,\alpha a)\delta_A^{R}|\alpha\in L-A^{*}\{ab\}\}\cup\{(\alpha ab,\alpha x)\delta_A^{R}|\alpha\in L-A^{*}\{ab\}\},\\
L_{x}^{\$}&=&\{(\alpha,\alpha x)\delta_A^{R}|\alpha\in L-A^{*}\{ab\}\}\cup\{(\alpha ab,\alpha xba)\delta_A^{R}|\alpha\in L-A^{*}\{ab\}\}\ (x\not\equiv b),\\
L_{x}^{\$}&=&\{(\alpha,\alpha x)\delta_A^{R}|\alpha\in L-A^{*}\{ab\}\}\cup\{(\alpha a^{i}b,\alpha xba^{i})\delta_A^{R}|\alpha\in L-A^{*}\{a\},i\geq1\}\ (x\equiv b)
\end{eqnarray*}
are regular.

Therefore, $L_{a_j}^{\$}$  are regular for each $j\in\{1,2,\dots,n\}$.
Hence $\widetilde{S}$ is automatic. Thus $S$ is prefix-automatic.


If $u\equiv a^{3},\ v\equiv x$, where $x\in A-\{a\}$, then $\{aaa=x,ax=xa\}$ is a Gr\"{o}bner-Shirshov basis in $\widetilde{S}$ and $L=A^{+}-A^{*}\{aaa,ax\}A^{*}$ is a normal form of $\widetilde{S}$. Clearly, $L$,
\begin{eqnarray*}
L_=^{\$}&=&\Delta_L,\\
L_{a_j}^{\$}&=&\{(\alpha,\alpha a_j)\delta_A^{R}|\alpha\in L\}\ (a_j\not\equiv a\ and\ a_j\not\equiv x),\\
L_a^{\$}&=&\{(\alpha,\alpha a)\delta_A^{R}|\alpha\in L-A^{*}\{aa\}\}\cup\{(\alpha aa,\alpha x)\delta_A^{R}|\alpha\in L-A^{*}\{a\}\},\\
L_x^{\$}&=&\{(\alpha,\alpha x)\delta_A^{R}|\alpha\in L-A^{*}\{a\}\}\cup\{(\alpha a,\alpha xa)\delta_A^{R}|\alpha\in L-A^{*}\{a\}\}\\
&&\cup\{(\alpha aa,\alpha xaa)\delta_A^{R}|\alpha\in L-A^{*}\{a\}\}
\end{eqnarray*}
are regular.


Therefore, $L_{a_j}^{\$}$  are regular for each $j\in\{1,2,\dots,n\}$.
Hence $\widetilde{S}$ is prefix-automatic.

$3)$ Suppose $|v|=2$.

Case 1. $\widetilde{S}=sgp^{+}\langle A|abc=xy\rangle$, where $a,b,c,x,y\in A$ and $a,b,c$ are pairwise different.
Obviously, $\{abc=xy\}$ is a Gr\"{o}bner-Shirshov basis in $\widetilde{S}$.

Case 1-1. If $xy\not\equiv bc$, by Theorem \ref{3.1.6}, $\widetilde{S}$ is prefix-automatic.

Case 1-2. If $xy\equiv bc$, by Proposition \ref{fre}, it is sufficient to prove $\widetilde{S}=sgp^{+}\langle a,b,c|abc=bc\rangle$ is prefix-automatic.

Since $\{abc=bc\}$ is a Gr\"{o}bner-Shirshov basis in $\widetilde{S}$,  we have $L=B^{+}-B^{*}\{abc\}B^{*}$ is a normal form of $\widetilde{S}$, where  $B=\{a,b,c\}$. Clearly,
\begin{eqnarray*}
&&L,\ \ L_=^{\$}=\Delta_L,\ \ \ L_a^{\$}=\{(\alpha,\alpha a)\delta_B^{R}|\alpha\in L\},\ \ \ \  L_b^{\$}=\{(\alpha,\alpha b)\delta_B^{R}|\alpha\in L\},\\
&&L_c^{\$}=\{(\alpha,\alpha c)\delta_B^{R}|\alpha\in L-B^{*}\{ab\}\}\cup\{(\alpha a^{i}b,\alpha bc)\delta_B^{R}|\alpha\in L-B^{*}\{a\},i\geq1\}
\end{eqnarray*}
are regular.
Hence $\widetilde{S}$ is prefix-automatic.

Case 2. $\widetilde{S}=sgp^{+}\langle A|aab=xy\rangle$, where $a,b,x,y\in A$ and $a\not\equiv b$. Obviously, $\{aab=xy\}$ is a Gr\"{o}bner-Shirshov basis in $\widetilde{S}$.

Case 2-1. If $xy\not\equiv ab$, we have $v\not\equiv ba$ since $u=v\not\in \{aba=ba,aab=ba,abb=bb| a,b\in A,a\not\equiv b\}$. By Theorem \ref{3.1.6}, $S$ is  prefix-automatic.

Case 2-2. If $xy\equiv ab$, by Proposition \ref{fre}, it is sufficient to prove $\widetilde{S}=sgp^{+}\langle a,b|aab=ab\rangle$ is prefix-automatic.

Since $\{aab=ab\}$ is a Gr\"{o}bner-Shirshov basis in $\widetilde{S}$, we have $L=B^{+}-B^{*}\{aab\}B^{*}$ is a normal form of $\widetilde{S}$, where $B=\{a,b\}$. Clearly, $L$,
$L_=^{\$}=\Delta_L$,
$L_a^{\$}=\{(\alpha,\alpha a)\delta_B^{R}|\alpha\in L\}$ and
$L_b^{\$}=\{(\alpha,\alpha b)\delta_B^{R}|\alpha\in L-B^{*}\{aa\}\}\cup\{(\alpha a^{i+2},\alpha ab)\delta_B^{R}|\alpha\in L-B^{*}\{a\},i\geq0\}$
are all regular.
Hence $\widetilde{S}$ is  prefix-automatic.

Case 3.  $\widetilde{S}=sgp^{+}\langle A|abb=xy\rangle$, where $a,b,x,y\in A$ and $a\not\equiv b$. Obviously, $\{abb=xy\}$ is a Gr\"{o}bner-Shirshov basis in $\widetilde{S}$.

Since $u=v\not\in \{aba=ba,aab=ba,abb=bb| a,b\in A,a\not\equiv b\}$, we have $xy\not\equiv bb$. By Theorem \ref{3.1.6}, $S$ is  prefix-automatic.

Case 4.  $\widetilde{S}=sgp^{+}\langle A|aba=xy\rangle$, where $a,b,x,y\in A$ and $a\not\equiv b$.

Case 4-1. If $xy\equiv aa$, then $\{aba=aa\}$ is a Gr\"{o}bner-Shirshov basis in $\widetilde{S}$. By Theorem \ref{3.1.6}, $S$ is  prefix-automatic.

Case 4-2. If $xy\equiv ab$, then $\{ab^{i}a=ab^{i}|i\geq1\}$ is a Gr\"{o}bner-Shirshov basis in $\widetilde{S}$. By Proposition \ref{fre}, it is sufficient to prove $\widetilde{S}=sgp^{+}\langle a,b|aba=ab\rangle$ is prefix-automatic.

Let $B=\{a,b\}$, $L=B^{+}-B^{*}\{a\}\{b\}^{+}\{a\}B^{*}$. Then $L$ is a normal form of $\widetilde{S}$. It is clear that $L$,
$L_=^{\$}=\Delta_L$,
$$
L_a^{\$}=\{(\alpha,\alpha a)\delta_B^{R}|\alpha\in L-B^{*}\{a\}\{b\}^{+}\}\cup\{(\alpha ab^{i},\alpha ab^{i})\delta_B^{R}|\alpha\in L-B^{*}\{a\}\{b\}^{+}, i\geq1\}
$$
and
$L_b^{\$}=\{(\alpha,\alpha b)\delta_B^{R}|\alpha\in L\}$
are regular.
Hence $\widetilde{S}$ is  prefix-automatic.

Case 4-3. If $x\equiv a, \ y\not\equiv a$ and $y\not\equiv b$, then $\{ay^{i}ba=ay^{i+1}|i\geq0\}$ is a Gr\"{o}bner-Shirshov basis in $S$. By Proposition \ref{fre}, it is sufficient to prove $\widetilde{S}=sgp^{+}\langle a,b,y|aba=ay\rangle$ is prefix-automatic.

Let $B=\{a,b,y\}$, $L=B^{+}-B^{*}\{a\}\{y\}^{*}\{ba\}B^{*}$. Then $L$ is a normal form of $\widetilde{S}$. Clearly
\begin{eqnarray*}
L_=^{\$}&=&\Delta_L,\ \ \  L_b^{\$}=\{(\alpha,\alpha b)\delta_B^{R}|\alpha\in L\},\ \ \ L_y^{\$}=\{(\alpha,\alpha y)\delta_B^{R}|\alpha\in L\},\\
L_a^{\$}&=&\{(\alpha,\alpha a)\delta_B^{R}|\alpha\in L-B^{*}\{a\}\{y\}^{*}\{b\}\}\\
&&\cup\{(\alpha ay^{i}b,\alpha ay^{i+1})\delta_B^{R}|\alpha\in L-B^{*}\{a\}\{y\}^{*}\{b\}, i\geq0\}
\end{eqnarray*}
are regular.
Hence $\widetilde{S}$ is  prefix-automatic.

Case 4-4. If $xy\equiv ba$, then $u=v \in\{aba=ba,aab=ba,abb=bb| a,b\in A,a\neq b\}$ and hence $S$ is not automatic by $(i)$.

Case 4-5. If $xy\equiv bb$, then $\{aba=bb,ab^{3}=b^{3}a\}$ is a Gr\"{o}bner-Shirshov basis in $\widetilde{S}$. By Proposition \ref{fre}, it is sufficient to prove $\widetilde{S}=sgp^{+}\langle a,b|aba=bb\rangle$ is prefix-automatic.

Let $B=\{a,b\}$, $L=B^{+}-B^{*}\{aba,ab^{3}\}B^{*}$. Then $L$ is a normal form of $\widetilde{S}$. Clearly,
\begin{eqnarray*}
L_=^{\$}&=&\Delta_L,\\
L_a^{\$}&=&\{(\alpha,\alpha a)\delta_B^{R}|\alpha\in L-B^{*}\{ab\}\}\cup\{(\alpha aab,\alpha abb)\delta_B^{R}|\alpha\in L-B^{*}\{ab\}\}\\
&&\cup\{(\alpha bab,b^{3}\alpha)\delta_B^{R}|\alpha\in L-B^{*}\{a,ab^2\}\},\\
L_b^{\$}&=&\{(\alpha,\alpha b)\delta_B^{R}|\alpha\in L-B^{*}\{ab^{2}\}\}\cup\{(\alpha ab^{2},b^{3}\alpha a)\delta_B^{R}|\alpha\in L-B^{*}\{ab\}\}
\end{eqnarray*}
are regular.
Hence $\widetilde{S}$ is prefix-automatic.

Case 4-6. If $x\equiv b,\ y\not\equiv a$ and $y\not\equiv b$, then $\{aba=by,ab^{2}y=byba\}$ is a Gr\"{o}bner-Shirshov basis in $S$. By Proposition \ref{fre}, it is sufficient to prove $\widetilde{S}=sgp^{+}\langle a,b,y|aba=by\rangle$ is prefix-automatic.

Let $B=\{a,b,y\}$, $L=B^{+}-B^{*}\{aba,ab^{2}y\}B^{*}$. Then $L$ is a normal form of $\widetilde{S}$. Obviously,
\begin{eqnarray*}
L_=^{\$}&=&\Delta_L,\\
L_a^{\$}&=&\{(\alpha,\alpha a)\delta_B^{R}|\alpha\in L-B^{*}\{ab\}\}\cup\{(\alpha ab,\alpha by)\delta_B^{R}|\alpha\in L-B^{*}\{ab\}\},\\
L_b^{\$}&=&\{(\alpha,\alpha b)\delta_B^{R}|\alpha\in L\}\},\\
L_y^{\$}&=&\{(\alpha,\alpha y)\delta_B^{R}|\alpha\in L-B^{*}\{ab^{2}\}\}\cup\{(\alpha ab^{2},\alpha byba)\delta_B^{R}|\alpha\in L-B^{*}\{ab\}\}
\end{eqnarray*}
are regular. Hence $\widetilde{S}$ is prefix-automatic.

Case 4-7. If $x\not\equiv a,\ x\not\equiv b$ and $y\equiv a$, then $\{abx^{i}a=x^{i+1}a|i\geq0\}$ is a Gr\"{o}bner-Shirshov basis in $S$. By Proposition \ref{fre}, it is sufficient to prove $\widetilde{S}=sgp^{+}\langle a,b,x|aba=xa\rangle$ is prefix-automatic.

Let $B=\{a,b,x\}$, $L=B^{+}-B^{*}\{ab\}\{x\}^{*}\{a\}B^{*}$. Then $L$ is a normal form of $\widetilde{S}$. Clearly,
$L_=^{\$}=\Delta_L$,
$$
L_a^{\$}=\{(\alpha,\alpha a)\delta_B^{R}|\alpha\in L-B^{*}\{ab\}\{x\}^{*}\}\cup\{(\alpha abx^{i},\alpha x^{i+1}a)\delta_B^{R}|\alpha\in L-B^{*}\{ab\}\{x\}^*\},
$$
$L_b^{\$}=\{(\alpha,\alpha b)\delta_B^{R}|\alpha\in L\}\}$ and $L_x^{\$}=\{(\alpha,\alpha x)\delta_B^{R}|\alpha\in L\}\}$ are all regular.
Hence $\widetilde{S}$ is prefix-automatic.

Case 4-8. If $x\not\equiv a,\ x\not\equiv b$ and $y\not\equiv a,x$, then $\{aba=xy,abxy=xyba\}$ is a Gr\"{o}bner-Shirshov basis in $S$. Let $B=\{a,b,x,y\}$ if $y\not\equiv b$ (Otherwise, let $B=\{a,b,x\}$). By Proposition \ref{fre}, it is sufficient to prove $\widetilde{S}=sgp^{+}\langle B|aba=xy\rangle$ is prefix-automatic. Note that
$L=B^{+}-B^{*}\{aba,abxy\}B^{*}$ is a normal form of $\widetilde{S}$. Clearly,
\begin{eqnarray*}
L_=^{\$}&=&\Delta_L,\\
L_a^{\$}&=&\{(\alpha,\alpha a)\delta_B^{R}|\alpha\in L-B^{*}\{ab\}\}\cup\{(\alpha ab,\alpha xy)\delta_B^{R}|\alpha\in L-B^{*}\{ab\}\},\\
L_b^{\$}&=&\{(\alpha,\alpha b)\delta_B^{R}|\alpha\in L\}\}(y\not\equiv b),\\
L_b^{\$}&=&\{(\alpha,\alpha b)\delta_B^{R}|\alpha\in L-B^*\{abx\}\}\}
\cup\{(\alpha abx,\alpha xyba)\delta_B^{R}|\alpha\in L-B^*\{ab\}\}(y\equiv b),\\
L_x^{\$}&=&\{(\alpha,\alpha x)\delta_B^{R}|\alpha\in L\}\},\\
L_y^{\$}&=&\{(\alpha,\alpha y)\delta_B^{R}|\alpha\in L-B^{*}\{abx\}\}\cup\{(\alpha abx,\alpha xyba)\delta_B^{R}|\alpha\in L-B^{*}\{ab\}\}
\end{eqnarray*}
are all regular.
Hence $\widetilde{S}$ is prefix-automatic.

Case 4-9. If $x\not\equiv a,\ x\not\equiv b$ and $y\equiv x$, then $\{aba=xx,abx^{2}=x^{2}ba\}$ is a Gr\"{o}bner-Shirshov basis in $S$. By Proposition \ref{fre}, it is sufficient to prove $\widetilde{S}=sgp^{+}\langle a,b,x|aba=x^{2}\rangle$ is prefix-automatic.

Let $B=\{a,b,x\}$, $L=B^{+}-B^{*}\{aba,abx^{2}\}B^{*}$. Then $L$ is a normal form of $\widetilde{S}$. Noting that
\begin{eqnarray*}
L_=^{\$}&=&\Delta_L,\\
L_b^{\$}&=&\{(\alpha,\alpha b)\delta_B^{R}|\alpha\in L\}\},\\
L_a^{\$}&=&\{(\alpha,\alpha a)\delta_B^{R}|\alpha\in L-B^{*}\{ab\}\}\\
&&\cup\{(\alpha (abx)^{i}ab,\alpha xx(bax)^{i})\delta_B^{R}|\alpha\in L-B^{*}\{ab,abx\},i\geq0\},\\
L_x^{\$}&=&\{(\alpha,\alpha x)\delta_B^{R}|\alpha\in L-B^{*}\{abx\}\}\\
&&\cup\{(\alpha (abx)^{i},\alpha x(xba)^{i})\delta_B^{R}|\alpha\in L-B^{*}\{abx,ab\},i\geq1\}
\end{eqnarray*}
are regular,
$\widetilde{S}$ is  prefix-automatic.

Case 5.  $S=sgp^{+}\langle A|a^{3}=xy\rangle$, where $a,x,y\in A$.

Case 5-1. If $x\not\equiv a$ and $y\not\equiv a$, then $\{a^{3}=xy,axy=xya\}$ is a Gr\"{o}bner-Shirshov basis in $S$. Let $B=\{a,x,y\}$ if $x\not\equiv y$ (Otherwise, let $B=\{a,x\}$). By Proposition \ref{fre}, it is sufficient to prove $\widetilde{S}=sgp^{+}\langle B|aaa=xy\rangle$ is prefix-automatic.

Note that $L=B^{+}-B^{*}\{aaa,axy\}B^{*}$ is a normal form of $\widetilde{S}$. Clearly,
\begin{eqnarray*}
L_=^{\$}&=&\Delta_L,\\
L_a^{\$}&=&\{(\alpha,\alpha a)\delta_B^{R}|\alpha\in L-B^{*}\{aa\}\}\cup\{(\alpha aa,\alpha xy)\delta_B^{R}|\alpha\in L-B^{*}\{a\}\},\\
L_x^{\$}&=&\{(\alpha,\alpha x)\delta_B^{R}|\alpha\in L-B^*\{ax\}\}\cup\{(\alpha a^ix,\alpha xxa^i)\delta_B^R|\alpha\in L-B^*\{a\},i=1,2\},\ x\equiv y,\\
L_x^{\$}&=&\{(\alpha,\alpha x)\delta_B^{R}|\alpha\in L\}\},\   x\not\equiv y, \\
L_y^{\$}&=&\{(\alpha,\alpha y)\delta_B^{R}|\alpha\in L-B^{*}\{ax\}\}\cup\{(\alpha a^{i}x,\alpha xya^{i})\delta_B^{R}|\alpha\in L-B^{*}\{a\},i=1,2\}
\end{eqnarray*}
are regular.
Hence $\widetilde{S}$ is prefix-automatic.

Case 5-2. If $x\not\equiv a$ and $y\equiv a$, then $\{a^{3}=xa,ax^{i}a=x^{i}a^{2}|i\geq1\}$ is a Gr\"{o}bner-Shirshov basis in $S$. By Proposition \ref{fre}, it is sufficient to prove $\widetilde{S}=sgp^{+}\langle a,x|aaa=xa\rangle$ is prefix-automatic.

Let $B=\{a,x\}$, $L=B^{+}-B^{*}\{aaa\}B^{*}-B^{*}\{a\}\{x\}^{+}\{a\}B^{*}$. Then $L$ is a normal form of $\widetilde{S}$. Clearly,
\begin{eqnarray*}
L_=^{\$}&=&\Delta_L,\\
L_x^{\$}&=&\{(\alpha,\alpha x)\delta_B^{R}|\alpha\in L\}\},\\
L_a^{\$}&=&\{(\alpha,\alpha a)\delta_B^{R}|\alpha\in L-B^{*}\{aa\}-B^{*}\{a\}\{x\}^{+}\}\\
&&\cup\{(\alpha aa,\alpha xa)\delta_B^{R}|\alpha\in L-B^{*}\{a\}-B^{*}\{a\}\{x\}^{+}\}\\
&&\cup\{(\alpha ax^{i},\alpha x^{i}a^{2})\delta_B^{R}|\alpha\in L-B^{*}\{a\}-B^{*}\{a\}\{x\}^{+},i\geq1\}\\
&&\cup\{(\alpha aax^{i},\alpha x^{i+1}a)\delta_B^{R}|\alpha\in L-B^{*}\{a\}-B^{*}\{a\}\{x\}^{+},i\geq1\}
\end{eqnarray*}
are regular.
Hence $\widetilde{S}$ is  prefix-automatic.

Case 5-3. If $x\equiv a$ and $y\not\equiv a$, then $\{a^{3}=ay,ay^{i}a=a^{2}y^{i}|i\geq1\}$ is a Gr\"{o}bner-Shirshov basis in $S$. By Proposition \ref{fre}, it is sufficient to prove $\widetilde{S}=sgp^{+}\langle a,y|aaa=ay\rangle$ is prefix-automatic.

Let $B=\{a,y\}$, $L=(B^{+}-B^{*}\{aaa\}B^{*})-B^{*}\{a\}\{y\}^{+}\{a\}B^{*}$. Then $L$ is a normal form of $\widetilde{S}$. By noting that
\begin{eqnarray*}
L_=^{\$}&=&\Delta_L,\\
L_y^{\$}&=&\{(\alpha,\alpha y)\delta_B^{R}|\alpha\in L\},\\
L_a^{\$}&=&\{(\alpha,\alpha a)\delta_B^{R}|\alpha\in L-B^{*}\{aa\}-B^{*}\{a\}\{y\}^{+}\}\\
&&\cup\{(\alpha aa,\alpha ay)\delta_B^{R}|\alpha\in L-B^{*}\{a\}-B^{*}\{a\}\{y\}^{+}\}\\
&&\cup\{(\alpha ay^{i},\alpha a^{2}y^{i})\delta_B^{R}|\alpha\in L-B^{*}\{a\}-B^{*}\{a\}\{y\}^{+},i\geq1\}\\
&&\cup\{(\alpha aay^{i},\alpha ay^{i+1})\delta_B^{R}|\alpha\in L-B^{*}\{a\}-B^{*}\{a\}\{y\}^{+},i\geq1\}
\end{eqnarray*}
are regular,  $\widetilde{S}$ is prefix-automatic.

Case 5-4. If $x\equiv a$ and $y\equiv a$, then $\{a^{3}=a^{2}\}$ is a Gr\"{o}bner-Shirshov basis in $S$. By Proposition \ref{fre}, it is sufficient to prove $\widetilde{S}=sgp^{+}\langle a\mid aaa=aa\rangle$ is prefix-automatic. Since $\widetilde{S}$ is a finite semigroup, we have $\widetilde{S}$ is  prefix-automatic.

$4)$ Suppose $|v|=3$. Then $S=sgp\langle A\mid u=v\rangle$ is biautomatic and prefix-automatic by Lemma \ref{3.2.3}.
\end{proof}

Theorem \ref{3.2.6} follows from Lemmas \ref{3.2.2} and \ref{3.2.5}.

\end{document}